\documentclass[11pt,a4paper]{article}

\usepackage[utf8]{inputenc}
\usepackage[T1]{fontenc}
\usepackage{lmodern}
\usepackage{microtype}

\usepackage{amsmath,amssymb,amsthm,mathtools}
\usepackage{enumitem}
\usepackage[margin=1in]{geometry}
\usepackage[title]{appendix}

\usepackage{xcolor}
\usepackage[colorlinks=true,linkcolor=blue!55!black,citecolor=blue!55!black,%
            urlcolor=blue!55!black]{hyperref}
\usepackage{cleveref}
\usepackage{aliascnt}

\theoremstyle{plain}
\newtheorem{theorem}{Theorem}[section]
\newaliascnt{proposition}{theorem}
\newtheorem{proposition}[proposition]{Proposition}
\aliascntresetthe{proposition}
\newaliascnt{lemma}{theorem}
\newtheorem{lemma}[lemma]{Lemma}
\aliascntresetthe{lemma}
\newaliascnt{corollary}{theorem}
\newtheorem{corollary}[corollary]{Corollary}
\aliascntresetthe{corollary}
\theoremstyle{definition}
\newaliascnt{definition}{theorem}
\newtheorem{definition}[definition]{Definition}
\aliascntresetthe{definition}
\newaliascnt{example}{theorem}
\newtheorem{example}[example]{Example}
\aliascntresetthe{example}
\newaliascnt{construction}{theorem}

\aliascntresetthe{construction}
\theoremstyle{remark}
\newaliascnt{remark}{theorem}
\newtheorem{remark}[remark]{Remark}
\aliascntresetthe{remark}
\newaliascnt{notation}{theorem}
\newtheorem{notation}[notation]{Notation}
\aliascntresetthe{notation}
\newaliascnt{conjecture}{theorem}

\aliascntresetthe{conjecture}

\crefname{theorem}{theorem}{theorems}
\Crefname{theorem}{Theorem}{Theorems}
\crefname{proposition}{proposition}{propositions}
\Crefname{proposition}{Proposition}{Propositions}
\crefname{lemma}{lemma}{lemmas}
\Crefname{lemma}{Lemma}{Lemmas}
\crefname{corollary}{corollary}{corollaries}
\Crefname{corollary}{Corollary}{Corollaries}
\crefname{definition}{definition}{definitions}
\Crefname{definition}{Definition}{Definitions}
\crefname{example}{example}{examples}
\Crefname{example}{Example}{Examples}
\crefname{construction}{construction}{constructions}
\Crefname{construction}{Construction}{Constructions}
\crefname{remark}{remark}{remarks}
\Crefname{remark}{Remark}{Remarks}
\crefname{notation}{notation}{notations}
\Crefname{notation}{Notation}{Notations}
\crefname{conjecture}{conjecture}{conjectures}
\Crefname{conjecture}{Conjecture}{Conjectures}

\newcommand{\RR}{\mathbb{R}}
\newcommand{\CC}{\mathbb{C}}
\newcommand{\Id}{\mathrm{I}}
\newcommand{\diag}{\operatorname{diag}}
\newcommand{\Tr}{\operatorname{Tr}}
\newcommand{\spec}{\operatorname{spec}}
\DeclareMathOperator{\OO}{O}
\DeclareMathOperator{\UU}{U}
\DeclareMathOperator{\SO}{SO}

\newcommand{\hP}{\widehat P}
\newcommand{\hQ}{\widehat Q}
\newcommand{\hD}{\widehat D}
\newcommand{\hL}{\widehat L}
\newcommand{\hT}{\widehat T}
\newcommand{\hpi}{\widehat\pi}
\newcommand{\hOm}{\widehat\Omega}
\newcommand{\Bc}{B}
\newcommand{\HNUO}{H_{\mathrm{NUO}}}
\newcommand{\Asym}{A_{\mathrm{sym}}}
\newcommand{\Ksym}{K_{\mathrm{sym}}}
\newcommand{\gQ}{\widetilde Q}

\newcommand{\sigmax}{\sigma^x}
\newcommand{\sigmay}{\sigma^y}
\newcommand{\sigmaz}{\sigma^z}
\newcommand{\Jx}{J_x}
\newcommand{\Jy}{J_y}
\newcommand{\Jz}{J_z}
\newcommand{\nhat}{\hat n}
\newcommand{\zhat}{\hat z}

\newcommand{\ket}[1]{\lvert #1\rangle}
\newcommand{\bra}[1]{\langle #1\rvert}
\newcommand{\braket}[2]{\langle #1\vert #2\rangle}
\newcommand{\ketbra}[2]{\lvert #1\rangle\langle #2\rvert}

\newcommand{\Quad}{\mathcal{Q}}
\newcommand{\BCo}{\mathrm{BC}}
\newcommand{\Fishinfo}{\mathcal{F}}
\newcommand{\angleFR}{\angle_{\mathrm{FR}}}
\newcommand{\Hel}{\mathrm{H}}
\newcommand{\Sym}{\operatorname{Sym}}
\newcommand{\kap}{\kappa}

\newcommand{\sFR}{\sigma^{\mathrm{FR}}}

\newcommand{\Ephase}{\mathcal{E}}   % diagonal phase gauge, was \widehat\Lambda
\newcommand{\Wt}{\mathsf{w}}        % mixture weight matrix, was W

\title{The pseudo-quantum representation of \\finite reversible Markov chains}

\author{%
  Eduardo J. Neves\\[2pt]
  \normalsize Instituto de Matem\'atica e Estat\'istica\\
  \normalsize Universidade de S\~ao Paulo\\
  \normalsize Rua do Mat\~ao 1010, 05508-090 S\~ao Paulo, SP, Brazil\\
  \normalsize \texttt{neves@ime.usp.br}%
}

\date{July 31, 2026}

\begin{document}

\maketitle

\begin{abstract}
\noindent
The pseudo-quantum representation re-encodes a finite, irreducible, reversible
continuous-time Markov chain with $M$ states as a complex-orthogonal flow on the
doubled space $\CC^{2}\otimes\CC^{M}$. After uniformisation, a square-root gauge,
a doubling of the state space and a diagonal unitary twist, the chain generates
an entire one-parameter group $W_z=e^{zK}$, $z\in\CC$. This single group is the
object of the paper. Its real slice $z=s\in\RR$ reproduces the stochastic
semigroup exactly through an explicit decoding whose probabilistic meaning is
the parity of the number of ticks of the uniformised chain. Its imaginary slice
$z=i\theta$ is a finite-dimensional unitary quantum system. The Markov chain and
the quantum system are therefore not two analogous models but two restrictions
of one entire representation, the chain read along the real axis and the quantum
objects read along the imaginary axis. The setup also produces, with no further
input, a pseudo-Schr\"odinger equation, a bilinear von Neumann equation for a
complex-symmetric pseudo-density, and a pseudo-Bloch vector equation on a
non-compact quadric. We develop the construction from first principles and work
out one model completely, the usual symmetric Ehrenfest urn. Its gauged
generator equals $\tfrac2n J_x$, the spin-$n/2$ operator, its relaxation modes
are Lorentz boosts, and its imaginary slice is a depth-one quantum circuit whose
Born distribution is the classical urn law under the clock
$\theta(t)=\arccos(e^{-2t})$. The same clock identifies the Fisher--Rao lift of
the urn trajectory with a rigid spin-coherent-state orbit. This is a preliminary simplified version of a longer paper. It treats only finite
state spaces and only the symmetric Ehrenfest urn, and it previews without proofs the
asymmetric urn, two queues, the symmetric simple exclusion process and the
stochastic Ising model. Short appendix primers make the finite-dimensional
quantum, geometric and information-theoretic language self-contained.
\end{abstract}

% =====================================================================
%  KEYWORDS AND CLASSIFICATION CODES
% =====================================================================

\begin{center}
\begin{minipage}{0.92\textwidth}
\small
\noindent
\textbf{Keywords.}
reversible Markov chain, square-root gauge, complex-orthogonal group,
dilation, Ehrenfest urn, Krawtchouk polynomials, spin coherent state,
Born rule, Fisher--Rao metric.

\smallskip

\noindent
\textbf{MSC 2020.}
60J27 (primary),
81Q10,
15A16,
33C45,
81P16,
94A17.
\end{minipage}
\end{center}

\bigskip

{\small\tableofcontents}

\bigskip

\section{Introduction}
\label{sec:intro}

\subsection{A reversible shadow of a dissipative chain}
\label{sub:intro-shadow}

A finite, irreducible, reversible continuous-time Markov chain is one of the
simplest models of stochastic dynamics. A particle hops among states
$\Omega=\{1,\dots,M\}$ at exponential rates, the column vector law $\pi_t$ obeys the
forward equation $\dot\pi_t=Q^{T}\pi_t$, and from any start the law relaxes to a
unique equilibrium $\nu$. The evolution is dissipative. The semigroup
$T_t=e^{tQ^{T}}$ contracts, the relative entropy with respect to $\nu$ decreases,
and the spectral gap fixes the relaxation time. One cannot run the semigroup
backwards to recover the initial law from a long observation in any stable way,
because information about the start is steadily lost. Standard references for this
picture are Norris \cite{Norris}, Levin, Peres and Wilmer \cite{LPW}, and Liggett
\cite{Liggett_ctmc}.

A closed quantum system is the opposite kind of object. It evolves by a unitary
group $e^{-i\theta H}$ that conserves total probability, is exactly reversible, and
forgets nothing. Its generator $H$ is Hermitian, its orbits are bounded, and the
motion is a rigid rotation of a state vector in a Hilbert space \cite{Sakurai}.

These two pictures look incompatible. The purpose of this paper is to show that for
a reversible chain they are two faces of a single object. We build a single
finite-dimensional, deterministic, complex-linear flow
\begin{equation}
W_z=e^{zK},\qquad z\in\CC,
\label{eq:intro-flow}
\end{equation}
which is an entire one-parameter group of complex matrices, holomorphic on the
whole plane. This entirety property is already a departure from ordinary quantum
mechanics. The standard quantum formalism is sesquilinear, built on the Hermitian
inner product $\Phi^{*}\Psi$, and a Hermitian-conjugate object depends on $\bar z$
and is therefore not holomorphic in a complex time. The one change that makes
\eqref{eq:intro-flow} entire is the replacement of that sesquilinear pairing by the
\emph{bilinear} pairing $\Phi^{T}\Psi$, which carries no complex conjugation. This
bilinear versus sesquilinear distinction is the crux of the whole construction
(\cref{sec:pseudostate} and the primer in \cref{sec:primer-quantum}).

The group $W_z$ has two distinguished lines. On the imaginary axis, $z=i\theta$,
the matrices $W_{i\theta}$ are ordinary unitary rotations, a genuine unitary
quantum evolution $e^{-i\theta H}$ on a finite Hilbert space canonically attached
to the chain. On the real axis, $z=s\in\RR$, the matrices $W_s$ are
\emph{hyperbolic} rotations. They are complex-orthogonal but not unitary, and we
call them the \emph{non-unitary orthogonal} (NUO) flow. On each relaxation mode of
the chain $W_s$ acts as a two-dimensional Lorentz boost, that is a hyperbolic
rotation of signature $(1,1)$, and it is this real slice that reproduces the
dissipative Markov semigroup $T_t$ exactly, after an explicit decoding described
below. We call the whole construction the \emph{pseudo-quantum representation}
(PQR) of the chain.

The point to hold on to is that there is only one object here. The chain is not
being compared with a quantum system, and the quantum system is not being
extracted from the chain by a further modelling step. One generator $K$ is built
from the chain by the four moves listed above, one entire group $W_z=e^{zK}$ is
formed, and then the classical and the quantum content are two restrictions of
that single representation. Restricting to the real axis and decoding gives back
$T_t$ with nothing lost. Restricting to the imaginary axis gives the unitary
group $e^{-i\theta H}$, its Hermitian Hamiltonian, its energy levels, its Born
rule and its bounded recurrent orbits. Holomorphy is what ties the two together,
since an entire function is determined by its restriction to any line, so the
Markov data and the quantum data determine each other. Dissipation and unitary
reversibility are not in conflict in this picture. They are what one sees looking
along two different directions in the domain of the same matrix-valued entire
function.

The setup also hands us, at no extra cost, a state formalism that mirrors quantum
mechanics with the same single change of pairing. A complex-symmetric
\emph{pseudo-density} $\rho_z=\Phi_z\Phi_z^{T}$ obeys a \emph{bilinear von
Neumann equation}
\begin{equation}
\dot\rho_z=[K,\rho_z],
\label{eq:intro-vn}
\end{equation}
identical in form to the quantum von Neumann equation $\dot\sigma=-i[H,\sigma]$
with $K$ in the role of $-iH$, the pseudo-wave $\Phi_z$ obeys a
\emph{pseudo-Schr\"odinger equation} $\dot\Phi_z=K\Phi_z$, and the coordinates of
$\rho_z$ in a basis of traceless complex-symmetric matrices obey a
\emph{pseudo-Bloch vector equation} $\dot r=\Omega\,r$ with
$\Omega^{T}=-\Omega$. On the imaginary slice these three reduce to the genuine
Schr\"odinger, von Neumann and Bloch equations of quantum mechanics. On the real
slice they describe a non-compact hyperbolic motion on a complex affine quadric.
We derive all three in \cref{sec:pseudostate}. We regard them as a bonus of the
construction rather than as one of its results. They come out of the bilinear
pairing by themselves, they are exact, and what they are good for is open. The
non-compact quadric on which the pseudo-Bloch vector moves, the meaning of its
conserved bilinear length when that length is complex, and the classification of
its orbits are all questions this paper raises and does not settle.

\subsection{The decoding, and why nothing is lost}
\label{sub:intro-decode}

The encoding is lossless in a concrete sense. Everything in this paper
is finite-dimensional linear algebra over $\CC$, and the recovery of $\pi_t$ from
$W_s$ is an exact identity, not an approximation or a scaling limit. The decoding
has three steps, a fixed similarity transformation, a scalar damping factor
$e^{-s}$, and a sum over a doubled index. The probabilistic content of these steps
is elementary. The doubling sorts trajectories by the parity of the number of
jumps they have made, and the damping is the normalisation into probabilities. We
make this precise in \cref{sec:dilation,sec:decode}, where the recovery identity is
stated and proved.

\subsection{Why the representation is worth having}
\label{sub:intro-why}

The construction is useful for three reasons that recur throughout the paper.

First, it turns relaxation into geometry. On a relaxation mode of rate $\gamma$
the real-time flow acts as a Lorentz boost whose rapidity grows linearly in the
rescaled time. The approach to equilibrium is the boost running off toward
infinite rapidity, while the scalar damping renormalises the growing amplitude
back to a probability. The hyperbolic angle through which a mode has turned is a
clean reparametrisation of how far the chain has relaxed. This is developed in
\cref{sec:ehrenfest}.

Second, it yields an exact dictionary, rather than an analogy, between a
reversible chain and a unitary system. The square-root gauge that maps a
reversible generator to a symmetric operator is classical and widely used in the
analysis of reversible chains \cite{Aldous,LPW}. What the PQR adds is that the
gauged chain and a true unitary model are the two slices of one entire group of
complex-orthogonal matrices, so a statement about either slice is a statement
about the same generator $K$ and constrains the other slice. In
\cref{sec:concluding} we separate the finite-state construction from the
additional identities that arise for the symmetric Ehrenfest urn.

Third, it is elementary and self-contained. It requires only linear algebra, the
matrix exponential,
and the basic vocabulary of Markov chains. The small
amount of quantum, geometric, and information-theoretic language we use is
collected in three short appendix primers
(\cref{sec:primer-quantum,sec:primer-geometry,sec:primer-info}).

\subsection{The Ehrenfest urn, worked out in detail}
\label{sub:intro-ehrenfest}

The Ehrenfest urn chain, in which $n$ balls move one at a time between two urns,
is the textbook model of relaxation and recurrence, introduced to reconcile
Boltzmann's H-theorem with Poincar\'e recurrence \cite{Kac}. It
is the single example this paper works out completely, because of one pleasant
property. The gauged generator of the symmetric Ehrenfest chain is exactly
$\tfrac2n$ times the spin-$n/2$ angular-momentum matrix $J_x$. From this single
identity the whole apparatus becomes elementary spin algebra, every object of the
general theory can be computed in closed form, and three exact connections appear.

The first connection is to special relativity. Each relaxation mode is a Lorentz
boost, and the spectrum of relaxation rates is a spectrum of rapidity rates. The
second is computational. The imaginary slice is an honest unitary on a finite
Hilbert space, and for the Ehrenfest chain it reduces to a \emph{depth-one quantum
circuit} whose Born-rule sampling distribution is the classical law $\pi_t$.
 The third is the Majorana stellar picture \cite{Majorana,Radcliffe}. A relaxing distribution
becomes a rigid constellation of $n$ points on the Bloch sphere, coincident at the
north pole at time zero, gliding together down a meridian toward the equator.
These three descriptions are tied to the same spin representation and the same
nonlinear clock.

A single transcendental quantum clock ties the classical and quantum readings
together. The chain at time $t$ matches the spin rotated through the angle
\begin{equation}
\theta(t)=\arccos\bigl(e^{-2t}\bigr),
\label{eq:intro-clock}
\end{equation}
which we can translate in two ways, spectrally through the subdominant Markov eigenvalue
and information-geometrically as a Fisher--Rao statistical angle.

\subsection{Scope of this preliminary version}
\label{sub:intro-scope}

This is a preliminary and deliberately simplified account of the pseudo-quantum
representation, and we state at the outset what has been left out. A longer
paper, currently in preparation, will contain the general theory and four
further families of models. The present version keeps two restrictions
throughout.

The first restriction is that the state space is finite. Every object in this
paper is a matrix of finite size and every identity is an identity between such
matrices, so the entire argument is finite-dimensional linear algebra. No
functional analysis is needed, no domain question arises, and no limit is taken.
The infinite-state case is genuinely harder, because a chain with unbounded
rates has no uniformisation and its gauged generator is an unbounded self-adjoint
operator, and it is deferred. A simple example where the usual uniformisation
strategy does not work is the queue $M/M/\infty$.

The second restriction is that the single worked model is the usual
\emph{symmetric} Ehrenfest urn, in which the two jump rates are equal. This is
the smallest model in which every feature of the construction is visible and
every object is available in closed form, and it keeps the spin algebra as
simple as it can be, namely rotations about one fixed axis. The asymmetric Ehrenfest urn,
in which the quantum spin axis tilts, is deferred with the rest.

The organisation follows that plan. After the notation and the square-root gauge
in \cref{sec:setup}, \cref{sec:dilation} builds the complex-orthogonal dilation
and states its defining property. \Cref{sec:decode,sec:pseudostate,sec:imaginary}
prove the real-slice decoding, develop the bilinear state formalism, and identify
the unitary imaginary slice, all for a general finite reversible chain.
\Cref{sec:ehrenfest} specialises everything to the symmetric Ehrenfest urn. It
contains the spin identity, the explicit spectral (Krawtchouk) decoding, the Born
rule under the quantum clock, the smallest instances as explicit matrices, and
the three connections above. \Cref{sec:geometry-info} records the geometric and
information-theoretic structure of the representation, including the way it
separates the Fisher--Rao geometry of statistical inference from a flat
$\chi^{2}$ geometry of convergence to equilibrium, and the conserved spectral
invariants of mixtures.

\Cref{sec:teaser} is a preview. It describes, in words and without proofs, what
the construction produces for the four families excluded here, namely the
asymmetric Ehrenfest urn and its tilted spin, the two textbook reversible
queues, the symmetric simple exclusion process, and the stochastic Ising model.
It states no lemma and proves nothing. Its purpose is to indicate the range of
the construction and to say what the complete paper will contain.
\Cref{sec:concluding} collects the contributions made
within the present finite-state scope. Appendix primers on the quantum and Lie
language (\cref{sec:primer-quantum}), on the differential-geometric language
(\cref{sec:primer-geometry}) and on the information-theoretic language
(\cref{sec:primer-info}) make the paper accessible to a reader from probability
with no background in these areas.

% ======================================================================

\section{Reversible Markov chains and the square-root gauge}
\label{sec:setup}

This section sets notation and recalls the classical tool on which the whole
construction rests, the square-root gauge that turns a reversible generator into a
symmetric matrix. The generator and semigroup formalism and the forward equation
are in Norris \cite[Ch.~2--3]{Norris}. Uniformisation is treated in
\cite[\S2.1]{Norris}. Detailed balance and the
symmetrising similarity are classical, see Aldous and Fill \cite[Ch.~3]{Aldous} and
Levin, Peres and Wilmer \cite[\S12.1]{LPW}. The present simplified paper uses this gauge
entirely within the finite-dimensional setting.

\subsection{Generator, semigroup, stationarity}
\label{sub:setup-gen}

Let $\Omega=\{1,\dots,M\}$ with $M\ge2$, and let $Q=(Q_{ij})$ be the generator of an
irreducible continuous-time Markov chain in the row convention,
\begin{equation}
Q_{ij}\ge0\ (i\ne j),\qquad Q_{ii}=-\sum_{j\ne i}Q_{ij},\qquad Q\mathbf 1=0,
\label{eq:gen}
\end{equation}
where $\mathbf 1=(1,\dots,1)^{T}$. The entry $Q_{ij}$ is the rate of jumps $i\to j$,
and $-Q_{ii}$ is the total exit rate from $i$. We write laws as column vectors
$\pi_t\in\RR^{M}$, so the forward equation and its solution read
\begin{equation}
\dot\pi_t=Q^{T}\pi_t,\qquad \pi_t=T_t\pi_0,\qquad T_t=e^{tQ^{T}}.
\label{eq:forward}
\end{equation}
Irreducibility gives a unique strictly positive stationary law $\nu$, normalised by
$\sum_i\nu_i=1$ and characterised by $Q^{T}\nu=0$. From every start $\pi_t\to\nu$ as
$t\to\infty$.

\begin{definition}[Reversibility and detailed balance]
\label{def:reversible}
The chain is \emph{reversible} with respect to $\nu$ if
\begin{equation}
\nu_iQ_{ij}=\nu_jQ_{ji}\qquad\text{for all }i,j.
\label{eq:db}
\end{equation}
\end{definition}

Two distinct senses of reversibility appear in this paper and we keep them apart
throughout. A chain is \emph{reversible} when it satisfies the detailed balance
\eqref{eq:db}, which is a symmetry of its stationary law, not of its dynamics. Its
time evolution is nonetheless \emph{dissipative}, or dynamically irreversible. The
semigroup $T_t$ contracts toward equilibrium, the relative entropy decreases, and
one cannot run $T_t$ backwards in any stable way. The detailed balance \eqref{eq:db}
is exactly the property that will let us store this dissipative evolution of a
detailed-balance reversible chain inside a deterministic flow that is reversible in
the dynamical sense, namely a one-parameter group that can be run backwards.

\subsection{Uniformisation}
\label{sub:setup-unif}

Fix a uniformisation rate
\begin{equation}
\Lambda\ge\max_i(-Q_{ii}),\qquad \Lambda>0,
\label{eq:Lambda}
\end{equation}
and set
\begin{equation}
P:=\Id+\tfrac1\Lambda Q.
\label{eq:P}
\end{equation}
Because $\Lambda$ dominates every exit rate, $P$ has nonnegative entries and rows
summing to $1$, so it is row-stochastic. It is the jump kernel of the chain watched
at the ticks of an independent Poisson clock of rate $\Lambda$, which is the content
of uniformisation. Two facts follow at once. First, since
$Q^{T}=\Lambda(P^{T}-\Id)$,
\begin{equation}
T_t=e^{tQ^{T}}=e^{\Lambda t(P^{T}-\Id)}.
\label{eq:Tt-P}
\end{equation}

Second, dividing \eqref{eq:db} by $\Lambda$ off the diagonal shows that $P$ is
reversible with respect to $\nu$, that is $\nu_iP_{ij}=\nu_jP_{ji}$. Expanding the
exponential in \eqref{eq:Tt-P} gives the standard Poisson representation
\begin{equation}
T_t=e^{-s}\sum_{k\ge0}\frac{s^{k}}{k!}\,(P^{T})^{k},\qquad s:=\Lambda t,
\label{eq:poisson-rep}
\end{equation}
the average of the $k$-step kernels over the Poisson$(s)$ number of clock ticks. We
will meet \eqref{eq:poisson-rep} again in \cref{sec:dilation}, where the doubled
state space splits this sum into its even and odd halves.

\subsection{The square-root (Hermitian) gauge}
\label{sub:setup-gauge}

Reversibility lets us symmetrise $P$ by a diagonal change of basis. Let
\begin{equation}
D:=\diag\bigl(\sqrt{\nu_1},\dots,\sqrt{\nu_M}\bigr),
\label{eq:D}
\end{equation}
a positive diagonal matrix, and define the square-root gauge
\begin{equation}
A:=DPD^{-1}.
\label{eq:A}
\end{equation}

\begin{lemma}[The gauge is symmetric exactly under detailed balance]
\label{lem:gauge-sym}
$P$ is reversible with respect to $\nu$ if and only if $A=DPD^{-1}$ is symmetric.
When it holds,
\begin{equation}
A=DPD^{-1}=D^{-1}P^{T}D,\qquad A^{T}=A.
\label{eq:A-two-forms}
\end{equation}
\end{lemma}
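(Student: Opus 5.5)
The plan is a direct entrywise computation, since $D$ is diagonal with strictly positive entries $\sqrt{\nu_i}$ (irreducibility gives $\nu_i>0$, so $D^{-1}$ exists). First I will write the entries of the gauge explicitly as
\begin{equation*}
A_{ij}=\sqrt{\nu_i}\,P_{ij}\,\frac{1}{\sqrt{\nu_j}},\qquad A_{ji}=\sqrt{\nu_j}\,P_{ji}\,\frac{1}{\sqrt{\nu_i}} .
\end{equation*}
Multiplying the symmetry condition $A_{ij}=A_{ji}$ by the positive number $\sqrt{\nu_i\nu_j}$ turns it into $\nu_iP_{ij}=\nu_jP_{ji}$. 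Because this multiplier is nonzero, the step can be reversed. Hence $A^{T}=A$ holds if and only if detailed balance for $P$ holds for every pair $(i,j)$. The diagonal case $i=j$ is trivially true on both sides, so it needs no separate treatment. This settles the equivalence.

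For the two forms in \eqref{eq:A-two-forms}, I will simply transpose. Since $D^{T}=D$, we get $A^{T}=(DPD^{-1})^{T}=D^{-1}P^{T}D$. Under reversibility $A=A^{T}$, so $A=D^{-1}P^{T}D$, as claimed.

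There is no real obstacle here. The only point that needs care is invoking the strict positivity of $\nu$, which makes $D$ invertible and makes the rescaling by $\sqrt{\nu_i\nu_j}$ reversible. I will also remark that the reversibility of $P$ with respect to $\nu$ was already deduced from \eqref{eq:db} in \cref{sub:setup-unif}, so the lemma applies to the chain at hand.
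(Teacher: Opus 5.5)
Your proof is correct and follows the paper's own argument: both write $A_{ij}=\sqrt{\nu_i/\nu_j}\,P_{ij}$ and observe that $A_{ij}=A_{ji}$ is equivalent to detailed balance, using the positivity of $\nu$. The only cosmetic difference is in the second identity. You get $A=D^{-1}P^{T}D$ by transposing $DPD^{-1}$, using $D^{T}=D$, and then invoking $A=A^{T}$, whereas the paper checks $(D^{-1}P^{T}D)_{ij}=A_{ij}$ entry by entry.
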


\begin{proof}
By definition $A_{ij}=\sqrt{\nu_i/\nu_j}\,P_{ij}$. Hence
\[
A_{ij}=A_{ji}
\iff \sqrt{\tfrac{\nu_i}{\nu_j}}\,P_{ij}=\sqrt{\tfrac{\nu_j}{\nu_i}}\,P_{ji}
\iff \nu_iP_{ij}=\nu_jP_{ji},
\]
which is detailed balance. When detailed balance holds, its entries satisfy
\[
(D^{-1}P^{T}D)_{ij}
=\sqrt{\frac{\nu_j}{\nu_i}}\,P_{ji}
=\sqrt{\frac{\nu_i}{\nu_j}}\,P_{ij}
=A_{ij},
\]
which gives \eqref{eq:A-two-forms}.
\end{proof}

\begin{lemma}[Spectrum of the gauge]
\label{lem:gauge-spec}
$A$ is real symmetric and similar to the stochastic matrix $P$. Its eigenvalues are
therefore real, equal to those of $P$, and contained in $[-1,1]$. The eigenvalue
$1$ is attained by the unit vector $\sqrt\nu:=D\mathbf 1$, whose entries are
$\sqrt{\nu_i}$.
\end{lemma}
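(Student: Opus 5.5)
The plan is to read off each claim directly from \cref{lem:gauge-sym} and the definitions \eqref{eq:D}, \eqref{eq:A}.

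\emph{Real symmetric and similar to $P$.} The entries $A_{ij}=\sqrt{\nu_i/\nu_j}\,P_{ij}$ are real because $\nu$ is strictly positive and $P$ is real. Symmetry is \cref{lem:gauge-sym}, since we have already shown that $P$ inherits detailed balance from $Q$. Similarity to $P$ is immediate from the definition $A=DPD^{-1}$ with $D$ invertible.

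\emph{Real eigenvalues in $[-1,1]$.} Similar matrices have the same characteristic polynomial, so $\spec A=\spec P$ with multiplicities. The spectral theorem for real symmetric matrices makes these eigenvalues real. To locate them I will use the fact that $P$ is row-stochastic. For an eigenpair $Px=\lambda x$, choose $i$ with $|x_i|$ maximal. Then
\[
|\lambda|\,|x_i|\le\sum_j P_{ij}|x_j|\le|x_i|,
\]
which gives $|\lambda|\le1$. Equivalently, $\|P\|_{\infty\to\infty}=1$ bounds the spectral radius. Combined with reality, every eigenvalue lies in $[-1,1]$.

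\emph{The top eigenvector.} Here I use the second form in \eqref{eq:A-two-forms}, $A=D^{-1}P^{T}D$. Then
\[
A\sqrt\nu=D^{-1}P^{T}D\,D\mathbf 1=D^{-1}P^{T}\nu .
\]
From $Q^{T}\nu=0$ we get $P^{T}\nu=\nu+\Lambda^{-1}Q^{T}\nu=\nu$, so $A\sqrt\nu=D^{-1}\nu=\sqrt\nu$. Alternatively, $A\,D\mathbf 1=DP\mathbf 1=D\mathbf 1$ follows directly from $P\mathbf 1=\mathbf 1$, which is even simpler. Finally, $\|\sqrt\nu\|^2=\sum_i\nu_i=1$, so $\sqrt\nu$ is a unit vector.

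\emph{Main obstacle.} Nothing here is deep. The only step needing an actual argument is the bound $|\lambda|\le1$, and the max-modulus row estimate above handles it.
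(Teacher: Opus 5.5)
Your proposal is correct and follows essentially the same route as the paper. Similarity gives $\spec(A)=\spec(P)$, symmetry from \cref{lem:gauge-sym} gives real eigenvalues, and the row-sum bound $\|P\|_\infty=1$ gives $|\lambda|\le1$; your max-modulus estimate just writes that bound out. For the top eigenvector the paper uses exactly your simpler computation $A\,D\mathbf 1=DP\mathbf 1=D\mathbf 1$ together with $\sum_i\nu_i=1$, and your alternative via the second form in \eqref{eq:A-two-forms} and $P^{T}\nu=\nu$ is equally valid.
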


\begin{proof}
$A=DPD^{-1}$ is a similarity, so $\spec(A)=\spec(P)$, and symmetry comes from
\cref{lem:gauge-sym}. A stochastic matrix has spectral radius $1$, because its rows
sum to $1$, so $\mathbf 1$ is a right eigenvector with eigenvalue $1$, and
$\|P\|_\infty=1$ bounds every eigenvalue by $1$ in modulus. Since $A$ is symmetric
its eigenvalues are real, hence they lie in $[-1,1]$. Finally
$A\sqrt\nu=DPD^{-1}D\mathbf 1=DP\mathbf 1=D\mathbf 1=\sqrt\nu$, and
$\|\sqrt\nu\|^{2}=\sum_i\nu_i=1$.
\end{proof}

The point of the gauge is that the entire Markov semigroup is now carried by a
symmetric matrix. Writing $s=\Lambda t$ for the rescaled time and combining
\eqref{eq:Tt-P} with \eqref{eq:A}, we obtain
\begin{equation}
T_t=D\,e^{s(A-\Id)}D^{-1},\qquad s=\Lambda t.
\label{eq:gauged-semigroup}
\end{equation}
Indeed $D^{-1}Q^{T}D=\Lambda(D^{-1}P^{T}D-\Id)=\Lambda(A-\Id)$ by
\eqref{eq:A-two-forms}, and exponentiating $t$ times this conjugated generator gives
\eqref{eq:gauged-semigroup}. The damping factor $e^{-s}$ of the real slice is
already visible, as the $-\Id$ inside $e^{s(A-\Id)}=e^{-s}e^{sA}$.

\begin{remark}[The gauged generator]
\label{rem:gauged-gen}
It is worth isolating the combination that appears in
\eqref{eq:gauged-semigroup}. The \emph{gauged generator}
\begin{equation}
\gQ:=D^{-1}Q^{T}D=\Lambda(A-\Id),\qquad \gQ_{ij}=\sqrt{\nu_i/\nu_j}\;Q_{ij}\ (i\ne j),
\label{eq:gauged-gen}
\end{equation}
is symmetric under detailed balance by the same computation as
\cref{lem:gauge-sym}. On the finite state space considered here, $\gQ$ and $A$
carry the same information through \eqref{eq:gauged-gen}, and the uniformised
form $A$ is the one used throughout the dilation below.
\end{remark}

\subsection{A running instance: the one-ball Ehrenfest chain}
\label{sub:setup-twostate}

\begin{example}[The one-ball symmetric Ehrenfest chain]
\label{ex:two-state}
Let $M=2$ with symmetric rates
$Q=\left(\begin{smallmatrix}-1&1\\ 1&-1\end{smallmatrix}\right)$, so each state
leaves at rate $1$. Then $\nu=(\tfrac12,\tfrac12)$, detailed balance is automatic,
and the choice $\Lambda=1$ gives the pure flip
$P=\Id+Q=\left(\begin{smallmatrix}0&1\\ 1&0\end{smallmatrix}\right)$. Here
$D=\tfrac1{\sqrt2}\Id$, so
$A=P=\left(\begin{smallmatrix}0&1\\ 1&0\end{smallmatrix}\right)$
with $\spec(A)=\{+1,-1\}$. The semigroup is
\[
T_t=e^{t(A-\Id)}
=e^{-t}\begin{pmatrix}\cosh t&\sinh t\\ \sinh t&\cosh t\end{pmatrix}
=\frac12\begin{pmatrix}1+e^{-2t}&1-e^{-2t}\\ 1-e^{-2t}&1+e^{-2t}\end{pmatrix}.
\]
Started in state $1$, the probability of being in state $1$ at time $t$ is
$p(t)=\tfrac12(1+e^{-2t})$. This single function, the relaxation of one two-state
chain, reappears as the single-ball factor behind the entire Ehrenfest analysis in
\cref{sec:ehrenfest}. The relaxation rate is the eigenvalue $-2$ of $Q$,
equivalently the factor $e^{-2t}$, and we will meet it again as the quantum clock
through $\cos\theta(t)=e^{-2t}$.
\end{example}

% ======================================================================

\section{The doubled complex-orthogonal dilation}
\label{sec:dilation}

We now build the central object, an entire complex-orthogonal linear group $W_z=e^{zK}$ whose
real slice carries the Markov semigroup and whose imaginary slice is unitary. The
construction starts from the symmetric gauge $A$ of \cref{sec:setup} and uses two
moves, a doubling of the state space and a diagonal unitary twist.

These moves remove the main obstacle. The gauged semigroup
\eqref{eq:gauged-semigroup} is driven by $e^{sA}$ with $A$ symmetric. A symmetric
generator stretches and contracts along its eigenvectors, since
$e^{sA}u_j=e^{s\alpha_j}u_j$. This is the opposite of a rotation, and one cannot
read $e^{sA}$ as orthogonal, because orthogonal matrices have unimodular
eigenvalues while $e^{sA}$ does not. The doubling supplies the extra dimensions in
which each stretching direction can pair with a partner and become a two-dimensional hyperbolic
rotation (a boost), and the twist aligns those hyperbolic rotations into a single
complex-orthogonal group. The word \emph{dilation} is used in the spirit of
operator theory, where a contraction semigroup is realised inside a group on a
larger space. The classical unitary theory is Sz.-Nagy and Foia\c{s}
\cite{SzNagy}. The dilation below is finite-dimensional, exact, and lands in the
complex orthogonal group rather than in a unitary group.

\subsection{The doubled chain and the parity of the jump count}
\label{sub:dil-double}

Double the state space to $\hOm=\Omega\times\{+,-\}$, with vectors
$x=\binom{x^{+}}{x^{-}}\in\RR^{2M}$. Using the Kronecker product of
\cref{sub:primer-pauli}, define the doubled stochastic matrix, generator, and
semigroup
\begin{equation}
\hP=\begin{pmatrix}0&P\\ P&0\end{pmatrix}=\sigmax\otimes P,\qquad
\hQ=\Lambda(\hP-\Id_{2M}),\qquad \hT_t=e^{t\hQ^{T}}.
\label{eq:doubled-markov}
\end{equation}
This is an honest continuous-time Markov chain on $2M$ states. At each Poisson tick
it performs one $P$-move on $\Omega$ and flips the sheet label $\pm$. The flip
gives the doubled chain a transparent probabilistic meaning. Throughout, a
\emph{jump} of the uniformised chain means a tick of its Poisson clock. When
$P_{ii}>0$ some ticks leave the register unchanged, and such jumps
also count toward the parity below.

\begin{lemma}[The sheet is the jump-count parity]
\label{lem:parity}
Run the uniformised chain of \cref{sec:setup}, a Poisson$(\Lambda)$ clock with
$P$-moves at the ticks, and start the doubled chain at $(i,+)$. At time $t$ the
doubled chain is at $(j,+)$ if the original chain is at $j$ after an even number of
ticks, and at $(j,-)$ if after an odd number. Consequently, for the embedding
$\hpi_0=\binom{\pi_0}{0}$,
\begin{equation}
\hT_t\binom{\pi_0}{0}
=\binom{\,e^{-s}\cosh(sP^{T})\,\pi_0\,}{\,e^{-s}\sinh(sP^{T})\,\pi_0\,},
\qquad s=\Lambda t,
\label{eq:parity-split}
\end{equation}
so the two sheets carry the even and odd halves of the Poisson representation
\eqref{eq:poisson-rep}, and their sum returns $T_t\pi_0$.
\end{lemma}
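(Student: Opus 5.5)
The plan is to prove the statement in two layers, a pathwise coupling for the parity claim and then a matrix computation for \eqref{eq:parity-split}, checking that the two agree.

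\emph{Pathwise claim.} Realise both chains on one probability space. Take a single Poisson$(\Lambda)$ clock with tick times $\tau_1<\tau_2<\cdots$ and an independent $P$-driven jump sequence $X_0=i,X_1,X_2,\dots$. The original uniformised chain is $X_{N_t}$, where $N_t$ is the number of ticks in $[0,t]$. For the doubled chain, note that $\hP=\sigmax\otimes P$ has entries $\hP_{(k,\epsilon),(l,\epsilon')}=P_{kl}$ exactly when $\epsilon'=-\epsilon$, and $0$ otherwise. So $\hQ=\Lambda(\hP-\Id_{2M})$ is the uniformisation of a chain that, at each tick, makes a $P$-move on $\Omega$ and flips the sheet. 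This flip occurs also at ticks where $P_{kk}>0$ leaves the register unchanged, which is why those ticks count toward the parity. Hence the process $(X_{N_t},(-1)^{N_t})$, with the sign encoding $\pm$, has generator $\hQ$ and starts at $(i,+)$, so it is a version of the doubled chain. The sheet is then $+$ precisely when $N_t$ is even, which is the first claim.

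\emph{Matrix identity.} Compute $\hT_t$ directly rather than through probabilities, so that no measure-theoretic details are needed. Since $\Lambda\Id$ commutes with everything,
\[
\hT_t=e^{-s}e^{s\hP^{T}},\qquad \hP^{T}=\sigmax\otimes P^{T}.
\]
The key step is that $(\sigmax)^2=\Id_2$ gives $(\hP^{T})^{k}=(\sigmax)^{k}\otimes(P^{T})^{k}$. This equals $\Id_2\otimes(P^{T})^k$ for $k$ even and $\sigmax\otimes(P^{T})^k$ for $k$ odd. Splitting the exponential series by parity therefore yields
\[
e^{s\hP^{T}}=\Id_2\otimes\cosh(sP^{T})+\sigmax\otimes\sinh(sP^{T}).
\]
Applying this to $\binom{\pi_0}{0}$ gives \eqref{eq:parity-split}. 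This also matches the pathwise picture: conditioning on $N_t=k$ with Poisson weight $e^{-s}s^k/k!$ puts mass $(P^{T})^k\pi_0$ on sheet $(-1)^k$. Summing the two sheets gives $e^{-s}(\cosh+\sinh)(sP^{T})\pi_0=e^{-s}e^{sP^{T}}\pi_0$, which equals $T_t\pi_0$ by \eqref{eq:poisson-rep}.

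\emph{Main difficulty.} The only delicate point is convention bookkeeping. We must verify that the transpose interacts correctly with the Kronecker product, namely $(\sigmax\otimes P)^T=\sigmax\otimes P^T$, which holds since $\sigmax$ is symmetric. We must also confirm that column-vector laws evolve by $\hQ^T$, so that mass on the $+$ block at time $0$ is moved by the off-diagonal blocks. The rest is routine series manipulation, and the series converge absolutely because everything is finite-dimensional.
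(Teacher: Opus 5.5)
Your proposal is correct and follows essentially the paper's proof. The parity claim holds because every tick flips the sheet, and you make the coupling $(X_{N_t},(-1)^{N_t})$ explicit where the paper calls it immediate; the identity \eqref{eq:parity-split} comes from splitting the series for $e^{s\hP^{T}}$ into even and odd powers using $(\sigmax)^2=\Id_2$.
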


\begin{proof}
The first statement is immediate, because every tick flips the sheet, so after $k$
ticks the sheet is $+$ exactly when $k$ is even, and the register marginal is the
original uniformised chain since the register move at each tick is an independent
$P$-move. For \eqref{eq:parity-split}, write $\hT_t=e^{-s}e^{s\hP^{T}}$ and use
$\hP^{T}=\sigmax\otimes P^{T}$ with $(\sigmax)^{2}=\Id_2$. Even powers of $\hP^{T}$
equal $\Id_2\otimes(P^{T})^{2j}$ and odd powers equal
$\sigmax\otimes(P^{T})^{2j+1}$, so the exponential series splits into
$\cosh(sP^{T})$ on the diagonal blocks and $\sinh(sP^{T})$ on the antidiagonal
blocks. Applying this to $\binom{\pi_0}{0}$ gives \eqref{eq:parity-split}, and
$\cosh x+\sinh x=e^{x}$ recovers \eqref{eq:poisson-rep}.
\end{proof}

The doubling is a bookkeeping that lets the new chain remember the parity of
its jump count, and this lets a group, which is invertible
and deterministic, carry a semigroup, which is contractive and stochastic.

\subsection{The twist and the generator \texorpdfstring{$K$}{K}}
\label{sub:dil-twist}

The gauge of the doubled chain, using $\hD:=\diag(D,D)=\Id_2\otimes D$, is the
symmetric block-antidiagonal matrix
\begin{equation}
\hL:=\hD\,\hP\,\hD^{-1}=\begin{pmatrix}0&A\\ A&0\end{pmatrix}=\sigmax\otimes A.
\label{eq:hL}
\end{equation}
On each eigenplane of $A$ this is the boost generator of \cref{sub:primer-boost},
and $e^{s\hL}$ is the hyperbolic rotation $\beta_{s\alpha}$ on that plane. Now
introduce the unitary \emph{twist}
\begin{equation}
V:=\diag(\Id_M,\,i\Id_M)=v\otimes\Id_M,\qquad v=\diag(1,i),
\label{eq:V}
\end{equation}
and define the generator central to the whole paper,
\begin{equation}
K:=V\hL V^{-1}=(v\sigmax v^{-1})\otimes A=\sigmay\otimes A
=\begin{pmatrix}0&-iA\\ iA&0\end{pmatrix}.
\label{eq:K}
\end{equation}

The twist rotated the ancilla $\sigmax$ into $\sigmay$ (\cref{sub:primer-pauli}),
converting the hyperbolic generator $\hL$ into $K$. Note also the identity
\begin{equation}
K^{2}=(\sigmay)^{2}\otimes A^{2}=\Id_2\otimes A^{2},
\label{eq:Ksq}
\end{equation}
which we use repeatedly. The whole pipeline reads
\begin{equation}
Q \;\xrightarrow{\ \text{uniformise}\ }\; P \;\xrightarrow{\ \text{gauge}\ }\; A
\;\xrightarrow{\ \text{double and twist}\ }\; K=\sigmay\otimes A
\;\xrightarrow{\ \exp\ }\; W_z=e^{zK}.
\label{eq:pipeline}
\end{equation}

\subsection{The entire complex-orthogonal group}
\label{sub:dil-group}

We now state the central algebraic fact. Recall from \cref{sub:primer-pairings} the
complex-orthogonal group $\OO(2M,\CC)$, which preserves the bilinear pairing
$\Phi^{T}\Psi$, and the unitary group $\UU(2M)$, which preserves the Hermitian
pairing $\Phi^{*}\Psi$. These are different groups, and the difference is the whole
point of using the prefix ``pseudo''.

\begin{theorem}[Complex-orthogonal dilation]
\label{thm:dilation}
The generator $K=\sigmay\otimes A$ of \eqref{eq:K} satisfies
\begin{equation}
K^{T}=-K\qquad\text{and}\qquad K^{*}=K,
\label{eq:K-double}
\end{equation}
that is, $K$ is antisymmetric for the transpose and Hermitian for the conjugate
transpose. For $z\in\CC$ set $W_z:=e^{zK}$. Then $z\mapsto W_z$ is an entire
homomorphism from $(\CC,+)$ into $\OO(2M,\CC)$, with
\[
W_{z+w}=W_zW_w,\qquad W_0=\Id,\qquad W_z^{T}W_z=\Id,
\]
and it has the closed block form
\begin{equation}
W_z=\begin{pmatrix}\cosh(zA)&-i\sinh(zA)\\ i\sinh(zA)&\cosh(zA)\end{pmatrix}.
\label{eq:Wz}
\end{equation}
\end{theorem}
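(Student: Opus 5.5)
The plan is to prove the two symmetry properties of $K$ first, then derive the group properties from general facts about the matrix exponential, and finally compute the block form from the identity \eqref{eq:Ksq}.

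\textbf{Symmetries of $K$.} I use the Kronecker rules $(X\otimes Y)^{T}=X^{T}\otimes Y^{T}$ and $(X\otimes Y)^{*}=X^{*}\otimes Y^{*}$. By \cref{lem:gauge-spec}, $A$ is real symmetric, so $A^{T}=A^{*}=A$. The Pauli matrix $\sigmay=\left(\begin{smallmatrix}0&-i\\ i&0\end{smallmatrix}\right)$ satisfies $(\sigmay)^{T}=-\sigmay$ and $(\sigmay)^{*}=\sigmay$. These two facts give $K^{T}=-\sigmay\otimes A=-K$ and $K^{*}=K$, which is \eqref{eq:K-double}. One can also read both identities directly off the block form in \eqref{eq:K}.

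\textbf{Group properties.} The power series $e^{zK}=\sum_k z^kK^k/k!$ converges absolutely and locally uniformly in $z$. Hence each matrix entry of $W_z$ is an entire function of $z$. Since $zK$ and $wK$ commute, $e^{(z+w)K}=e^{zK}e^{wK}$, and $W_0=\Id$ is immediate. For orthogonality I transpose the series term by term: $W_z^{T}=e^{zK^{T}}=e^{-zK}=W_{-z}$. Then $W_z^{T}W_z=W_{-z}W_z=W_0=\Id$. This shows the map lands in $\OO(2M,\CC)$ and is a homomorphism from $(\CC,+)$.

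\textbf{Block form.} By \eqref{eq:Ksq}, $K^{2}=\Id_2\otimes A^{2}$. Hence $K^{2j}=\Id_2\otimes A^{2j}$ and $K^{2j+1}=\sigmay\otimes A^{2j+1}$. I split the exponential series into its even and odd parts, exactly as in the proof of \cref{lem:parity}. This gives
\[
W_z=\Id_2\otimes\cosh(zA)+\sigmay\otimes\sinh(zA).
\]
Writing out $\sigmay\otimes\sinh(zA)$ in blocks yields \eqref{eq:Wz}. As a consistency check, I would conjugate the identity $e^{z\hL}=\Id_2\otimes\cosh(zA)+\sigmax\otimes\sinh(zA)$ by the twist $V$ of \eqref{eq:V}.

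There is no real obstacle here. The only point needing care is keeping the two adjoints distinct. Orthogonality $W_z^{T}W_z=\Id$ holds for every $z\in\CC$ because the transpose involves no conjugation of $z$. Unitarity, by contrast, holds only when $\bar z=-z$, that is, on the imaginary axis, since $W_z^{*}=e^{\bar zK}$.
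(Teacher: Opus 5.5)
Your proposal is correct and follows the paper's proof essentially step for step: the Kronecker transpose rule gives $K^{T}=-K$, commutativity of $zK$ and $wK$ gives the homomorphism law, $W_z^{T}=e^{-zK}$ gives orthogonality, and the even/odd splitting via $K^{2}=\Id_2\otimes A^{2}$ gives the block form. The only cosmetic difference is that you obtain $K^{*}=K$ from the Hermiticity of $\sigmay$ and $A$, whereas the paper deduces it from $K$ being purely imaginary together with $K^{T}=-K$.
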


\begin{proof}
By the Kronecker rules of \cref{sub:primer-pauli},
$K^{T}=(\sigmay)^{T}\otimes A^{T}=(-\sigmay)\otimes A=-K$, using $A^{T}=A$. Because
$\sigmay$ is purely imaginary and $A$ is real, $K$ is purely imaginary, so
$K^{*}=\overline{K^{T}}=\overline{-K}=K$. The series $e^{zK}$ is entire in $z$, and
the homomorphism law holds because $K$ commutes with itself. For orthogonality,
\[
W_z^{T}W_z=e^{zK^{T}}e^{zK}=e^{-zK}e^{zK}=\Id .
\]
For the block form, \eqref{eq:Ksq} shows that even powers of $K$ are
$\Id_2\otimes A^{2j}$ and odd powers are $\sigmay\otimes A^{2j+1}$. Summing the
exponential series, the even part assembles $\cosh(zA)$ on the diagonal blocks and
the odd part assembles $\sinh(zA)$ against $\sigmay$, which gives \eqref{eq:Wz}.
\end{proof}

\begin{remark}[Boosts on eigenplanes]
\label{rem:boost-planes}
Let $u_j$ be an orthonormal eigenvector of $A$ with eigenvalue $\alpha_j$. The
two-dimensional space spanned by $\binom{u_j}{0}$ and $\binom{0}{u_j}$ is invariant
under $W_z$, and on it \eqref{eq:Wz} restricts to the two-by-two matrix
\[
\begin{pmatrix}\cosh(z\alpha_j)&-i\sinh(z\alpha_j)\\
i\sinh(z\alpha_j)&\cosh(z\alpha_j)\end{pmatrix},
\]
which at real $z=s$ is conjugate, by the fixed twist $v=\diag(1,i)$, to the Lorentz
boost $\beta_{s\alpha_j}$ of \cref{sub:primer-boost} with rapidity $s\alpha_j$, and
at imaginary $z=i\theta$ is a rotation by the angle $\theta\alpha_j$. The whole
flow is an orthogonal direct sum of these two-dimensional blocks, one per
eigenvalue of the gauge. This block picture is the engine of the Ehrenfest analysis
in \cref{sec:ehrenfest}.
\end{remark}

\begin{remark}[Why ``NUO'': orthogonal but not unitary]
\label{rem:nuo}
For real $s\neq0$ the matrix $W_s\in\OO(2M,\CC)$ preserves the bilinear form
$\Phi^{T}\Psi$, but it is not unitary, because $K^{*}=K$ gives
\[
W_s^{*}W_s=e^{sK^{*}}e^{sK}=e^{2sK}\ne\Id .
\]
So $W_s$ is non-unitary orthogonal. It conserves the complex bilinear length
$\Phi^{T}\Phi$ while distorting the Hermitian length $\Phi^{*}\Phi$, and that
Hermitian distortion $e^{2sK}$ is exactly the room in which the chain's dissipation
lives. On the imaginary slice, by contrast, $K^{*}=K$ makes $W_{i\theta}$
orthogonal and unitary at once (\cref{sec:imaginary}), and that coincidence is what
turns the imaginary slice into honest quantum mechanics.
\end{remark}

The next section shows that the real slice $W_s$ contains the Markov chain.

% ======================================================================

\section{Decoding: recovering the chain from the real slice}
\label{sec:decode}

The complex-orthogonal dilation would be just a curiosity if the chain could not be read
back out of it. We now prove that it can, exactly and through a simple recipe, and
we identify the probabilistic meaning of each step. This is the precise sense in
which the deterministic complex-orthogonal flow represents the dissipative chain
with no loss.

Three fixed ingredients turn $W_s$ back into a probability law. First, the
\emph{column-gauge matrix} $\Bc:=V\hD^{-1}$, with $V$ and $\hD$ from
\cref{sec:dilation}. Second, the \emph{marginal projection}
\begin{equation}
\Gamma\binom{x^{+}}{x^{-}}:=x^{+}+x^{-}\in\RR^{M},
\label{eq:Gamma}
\end{equation}
which adds the two sheets. Third, the \emph{embedding} of an initial law
$\pi_0\in\RR^{M}$ as $\hpi_0:=\binom{\pi_0}{0}$, placing all mass on the $+$ sheet
at time $0$.

\begin{theorem}[Decoding and reconstruction]
\label{thm:decode}
Let $Q$ be a finite irreducible reversible generator, with all objects defined from
$Q,\nu,\Lambda$ as in \cref{sec:setup,sec:dilation}. For $s=\Lambda t$,
\begin{equation}
\hT_t=e^{t\hQ^{T}}=e^{-s}\,\Bc^{-1}W_s\Bc,
\label{eq:doubled-decode}
\end{equation}
and consequently, for every initial probability law $\pi_0$,
\begin{equation}
\pi_t=T_t\pi_0=\Gamma\!\bigl(e^{-s}\,\Bc^{-1}W_s\Bc\,\hpi_0\bigr),\qquad
s=\Lambda t.
\label{eq:decode}
\end{equation}
\end{theorem}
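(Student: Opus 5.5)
The plan is to prove \eqref{eq:doubled-decode} by undoing the pipeline \eqref{eq:pipeline} one conjugation at a time, and then to obtain \eqref{eq:decode} from \cref{lem:parity}. Everything reduces to the fact that conjugation commutes with the matrix exponential, $Se^{X}S^{-1}=e^{SXS^{-1}}$ for invertible $S$. We will apply it to the two invertible factors of $\Bc=V\hD^{-1}$. Note that $\Bc^{-1}=\hD V^{-1}$.

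First, by \eqref{eq:K} we have $V^{-1}KV=\hL$, hence $V^{-1}W_sV=e^{s\hL}$. Next we conjugate by $\hD$. The point needing care is the direction of the transpose. The definition \eqref{eq:hL} gives $\hL=\hD\hP\hD^{-1}$, and conjugating back with this form would give $e^{s\hP}$, which is not what $\hT_t$ contains.

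The right move is to use the second form in \cref{lem:gauge-sym}, applied to the doubled chain. Since $\hD=\Id_2\otimes D$ and $\hP^{T}=\sigmax\otimes P^{T}$, \eqref{eq:A-two-forms} gives
\[
\hD^{-1}\hP^{T}\hD=\sigmax\otimes D^{-1}P^{T}D=\sigmax\otimes A=\hL .
\]
Therefore $\hD\hL\hD^{-1}=\hP^{T}$, and
\[
\Bc^{-1}W_s\Bc=\hD\,e^{s\hL}\,\hD^{-1}=e^{s\hP^{T}}.
\]
Multiplying by $e^{-s}=e^{-\Lambda t}$ and using $\hQ^{T}=\Lambda(\hP^{T}-\Id_{2M})$ from \eqref{eq:doubled-markov}, we get $e^{-s}e^{s\hP^{T}}=e^{t\hQ^{T}}=\hT_t$. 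This is \eqref{eq:doubled-decode}.

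For \eqref{eq:decode}, we apply \eqref{eq:doubled-decode} to $\hpi_0=\binom{\pi_0}{0}$. By \cref{lem:parity}, i.e.\ \eqref{eq:parity-split}, the two sheets of $\hT_t\hpi_0$ are $e^{-s}\cosh(sP^{T})\pi_0$ and $e^{-s}\sinh(sP^{T})\pi_0$. The projection $\Gamma$ adds them to give $e^{-s}e^{sP^{T}}\pi_0$, which equals $T_t\pi_0=\pi_t$ by \eqref{eq:Tt-P} and \eqref{eq:poisson-rep}.

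The only real obstacle is the bookkeeping of which form of the gauge to use when conjugating by $\hD$. Choosing $\hD\hL\hD^{-1}=\hP^{T}$, which is exactly where detailed balance enters, rather than $\hD^{-1}\hL\hD=\hP$, is what makes the column-gauge matrix $\Bc=V\hD^{-1}$, and not $V\hD$, the correct one. As a sanity check, \cref{ex:two-state} has $D$ scalar, so there $\Bc^{-1}W_s\Bc=V^{-1}W_sV$ and the formula can be compared directly with the displayed $T_t$.
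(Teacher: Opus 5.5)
Your proof is correct and follows the paper's argument. The paper likewise removes the twist via $e^{s\hL}=V^{-1}W_sV$ and relies on the detailed-balance identity $\hD^{-1}\hQ^{T}\hD=\Lambda(\hL-\Id)$, which is your $\hD\hL\hD^{-1}=\hP^{T}$; the only cosmetic difference is that for \eqref{eq:decode} you cite the parity split of \cref{lem:parity} directly, whereas the paper recomputes the sheets as $e^{-s}D\cosh(sA)D^{-1}\pi_0$ and $e^{-s}D\sinh(sA)D^{-1}\pi_0$, and these agree with yours since $DAD^{-1}=P^{T}$.
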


\begin{proof}
The doubled gauge conjugates $\hQ^{T}$ to a multiple of $\hL-\Id$, since
$\hD^{-1}\hQ^{T}\hD=\Lambda(\hL-\Id)$ is the blockwise version of
\eqref{eq:gauged-semigroup}. Hence
\[
\hT_t=\hD\,e^{s(\hL-\Id)}\hD^{-1}=e^{-s}\,\hD\,e^{s\hL}\hD^{-1}.
\]
By \eqref{eq:K}, $K=V\hL V^{-1}$, so $e^{s\hL}=V^{-1}W_sV$. Because $V$ acts on the
ancilla and $\hD$ on the register, the two commute, and
\[
\hT_t=e^{-s}\hD V^{-1}W_sV\hD^{-1}
=e^{-s}(V\hD^{-1})^{-1}W_s(V\hD^{-1})=e^{-s}\Bc^{-1}W_s\Bc,
\]
which is \eqref{eq:doubled-decode}. For \eqref{eq:decode}, note that the twist
cancels once the conjugation by $\Bc$ is carried out, since
$\Bc^{-1}W_s\Bc=\hD\,e^{s\hL}\hD^{-1}$ and
$e^{s\hL}=\left(\begin{smallmatrix}\cosh(sA)&\sinh(sA)\\
\sinh(sA)&\cosh(sA)\end{smallmatrix}\right)$ by the same even and odd splitting
that produced \eqref{eq:Wz}. Applying this to the embedded law,
\begin{equation}
\hT_t\binom{\pi_0}{0}
=e^{-s}\binom{D\cosh(sA)D^{-1}\pi_0}{\,D\sinh(sA)D^{-1}\pi_0\,},
\label{eq:sheets}
\end{equation}
a real vector, and adding the two sheets. Using $\cosh(sA)+\sinh(sA)=e^{sA}$,
\[
\Gamma\bigl(\hT_t\hpi_0\bigr)=e^{-s}D\,e^{sA}D^{-1}\pi_0
=D\,e^{s(A-\Id)}D^{-1}\pi_0=T_t\pi_0
\]
by \eqref{eq:gauged-semigroup}.
\end{proof}

\begin{corollary}[The probabilistic meaning of each step]
\label{cor:decode-meaning}
Comparing the sheets \eqref{eq:sheets} with the parity split
\eqref{eq:parity-split} of \cref{lem:parity}, the two gauged sheets are exactly the
even-jump and odd-jump halves of the uniformised chain. In the decoding
\eqref{eq:decode} the similarity $\Bc$ removes the square-root gauge and the twist,
the damping $e^{-s}$ is the Poisson normalisation $e^{-\Lambda t}$ of the tick
count, and the marginal $\Gamma$ sums out the parity bit.
\end{corollary}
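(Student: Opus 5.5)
The plan is to compare the two expressions for the doubled semigroup applied to the embedded law, sheet by sheet. Then each step of the decoding recipe is read off from the Poisson tick count of the uniformisation. There are two inputs: the parity split \eqref{eq:parity-split} of \cref{lem:parity}, and the gauged sheets \eqref{eq:sheets} from the proof of \cref{thm:decode}. Both compute the same vector $\hT_t\binom{\pi_0}{0}$.

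First I will show the sheets agree blockwise. By \cref{lem:gauge-sym}, $A=D^{-1}P^{T}D$, so $A^{k}=D^{-1}(P^{T})^{k}D$ for every $k$. Summing the even and odd parts of the exponential series then gives
\[
D\cosh(sA)D^{-1}=\cosh(sP^{T}),\qquad D\sinh(sA)D^{-1}=\sinh(sP^{T}).
\]
Hence the $+$ and $-$ components of \eqref{eq:sheets} coincide with those of \eqref{eq:parity-split}. Expanding $e^{-s}\cosh(sP^{T})\pi_0=\sum_{k\ \text{even}}e^{-s}\frac{s^{k}}{k!}(P^{T})^{k}\pi_0$ shows that the $+$ sheet is the law of the register jointly with the event that the Poisson$(s)$ tick count is even. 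The analogous expansion shows the $-$ sheet corresponds to an odd count. This is exactly the path-level statement of \cref{lem:parity}.

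Next I will interpret the three factors in \eqref{eq:decode}. The similarity $\Bc=V\hD^{-1}$ undoes the twist and the square-root gauge. The proof of \cref{thm:decode} gives $\Bc^{-1}W_s\Bc=\hD e^{s\hL}\hD^{-1}=e^{s\hP^{T}}$, so conjugation by $\Bc$ returns a real matrix in the original (ungauged) coordinates. The scalar $e^{-s}=e^{-\Lambda t}$ is the common factor $\Pr(N_t=k)=e^{-s}s^{k}/k!$ of the Poisson weights. Without it, $e^{s\hP^{T}}\hpi_0$ has total mass $e^{s}$; with it, the mass is normalised to $1$. Finally, $\Gamma$ adds the two sheets. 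Since the sheets are the joint laws with even and odd parity, adding them marginalises out the parity bit, and $\cosh+\sinh=\exp$ recovers $T_t\pi_0$.

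No step is genuinely hard, since the corollary is essentially a reading of identities already proved. The one point needing care is the identification $D\cosh(sA)D^{-1}=\cosh(sP^{T})$. It must use the second form $A=D^{-1}P^{T}D$ in \eqref{eq:A-two-forms}, which depends on detailed balance, rather than the defining form $A=DPD^{-1}$. Using the defining form would give $P$ instead of $P^{T}$, a mismatch between row and column conventions.
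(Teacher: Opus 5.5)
Your proposal is correct and follows the paper's route. The paper reads the corollary off by matching \eqref{eq:sheets} against \eqref{eq:parity-split}, combined with the factorisation $\Bc^{-1}W_s\Bc=\hD e^{s\hL}\hD^{-1}$ from the proof of \cref{thm:decode}; your blockwise identity $D\cosh(sA)D^{-1}=\cosh(sP^{T})$ is a harmless direct check, since both displays already compute the same vector $\hT_t\binom{\pi_0}{0}$, and your Poisson expansion into joint laws with the tick-count parity and your total-mass-$e^{s}$ argument for the damping make explicit what the paper leaves implicit.
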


The decoding is therefore the composition of three simple operations, the
similarity $\Bc$, the scalar $e^{-s}$, and the linear marginal $\Gamma$, with one
entire group element $W_s$. The only non-injective step is the final marginal sum,
and it is balanced by the embedding, which started all mass on one sheet.

\begin{example}[Lossless reconstruction of the symmetric two-state chain]
\label{ex:two-state-decode}
Take the chain of \cref{ex:two-state}, with $M=2$, $\Lambda=1$, $A=\sigmax$, and
$D=\tfrac1{\sqrt2}\Id_2$. Since $A^{2}=\Id_2$, we have
$\cosh(sA)=\cosh s\,\Id_2$ and $\sinh(sA)=\sinh s\,\sigmax$, so the real slice is
the explicit matrix
\[
W_s=\begin{pmatrix}\cosh s\,\Id_2 & -i\sinh s\,\sigmax\\[2pt]
i\sinh s\,\sigmax & \cosh s\,\Id_2\end{pmatrix}.
\]
Here $\hD=\tfrac1{\sqrt2}\Id_4$, so $\Bc=V\hD^{-1}=\sqrt2\,\diag(\Id_2,i\Id_2)$.
Start in state $1$, $\pi_0=(1,0)^{T}$, $\hpi_0=(1,0,0,0)^{T}$. The pseudo-wave
$\Phi_0=\Bc\hpi_0=\sqrt2\,(1,0,0,0)^{T}$ evolves to
\[
\Phi_s=W_s\Phi_0=\sqrt2\,(\cosh s,\ 0,\ 0,\ i\sinh s)^{T},
\]
a complex vector whose bilinear length
$\Phi_s^{T}\Phi_s=2(\cosh^{2}s-\sinh^{2}s)=2$ is conserved, while its Hermitian
length $\Phi_s^{*}\Phi_s=2(\cosh^{2}s+\sinh^{2}s)$ grows. Decoding applies the
gauge and the damping,
\[
e^{-s}\Bc^{-1}\Phi_s=e^{-s}(\cosh s,\ 0,\ 0,\ \sinh s)^{T},
\]
whose sheets $x^{+}=e^{-s}(\cosh s,0)$ and $x^{-}=e^{-s}(0,\sinh s)$ are the
even-jump and odd-jump halves. Their marginal sum is
\[
\Gamma\bigl(e^{-s}\Bc^{-1}\Phi_s\bigr)=x^{+}+x^{-}=e^{-s}(\cosh s,\ \sinh s)
=\Bigl(\tfrac{1+e^{-2s}}2,\ \tfrac{1-e^{-2s}}2\Bigr)=\pi_t,
\]
exactly the law of \cref{ex:two-state}, with $s=t$ because $\Lambda=1$. The
dissipative two-state relaxation has
been reproduced by a deterministic rotation in $\CC^{4}$ followed by a scalar and a
sum. This is precisely the $n=1$ instance of the symmetric Ehrenfest urn developed
in \cref{sec:ehrenfest}.
\end{example}

The example makes precise where the arrow of time lives. It is not in the flow
$W_s$, which is invertible, conserves a bilinear length, and can be run backwards.
It is in the decoding read-out, the gauge $\Bc$, the damping $e^{-s}$ and the many-to-one
marginal $\Gamma$. Strip those away and what remains is a rigid complex rotation.
The next two sections study that rotation in its own right, first through its
quadratic pseudo-density (\cref{sec:pseudostate}) and then through its unitary form
on the imaginary axis (\cref{sec:imaginary}).

% ======================================================================

\section{The pseudo-density, its bilinear von Neumann equation, and the pseudo-Bloch vector}
\label{sec:pseudostate}

The dilation gives, for free, two further objects that look exactly like the state
vector and the density matrix of quantum mechanics, with the essential change noted
in the introduction. The pairing is the bilinear $\Phi^{T}\Psi$ rather than the
Hermitian $\Phi^{*}\Psi$. The prefix ``pseudo'' highlights this change. The genuine
density matrix, von Neumann equation, and Bloch vector are standard objects in quantum mechanics
\cite{Sakurai,BengtssonZyczkowski}, while the bilinear,
complex-symmetric variants below are part of the present construction.

\subsection{The pseudo-wave and the pseudo-density}
\label{sub:pseudo-def}

\begin{definition}[Pseudo-wave and pseudo-density]
\label{def:pseudo}
For an initial law $\pi_0$, $\hpi_0:=\binom{\pi_0}{0}$, set $\Phi_0:=\Bc\hpi_0$ and, for $z\in\CC$,
\[
\Phi_z:=W_z\Phi_0\in\CC^{2M}\quad(\text{the \emph{pseudo-wave}}),\qquad
\rho_z:=\Phi_z\Phi_z^{T}\in\Sym_{2M}(\CC)\quad(\text{the \emph{pseudo-density}}).
\]
The superscript $T$ is the plain transpose, with no complex conjugation, so
$\rho_z$ is complex symmetric and in general not Hermitian.
\end{definition}

\begin{theorem}[Pseudo-Schr\"odinger and bilinear von Neumann equations]
\label{thm:bilinear}
The pseudo-wave and pseudo-density obey
\begin{equation}
\dot\Phi_z=K\Phi_z,\qquad \dot\rho_z=[K,\rho_z]=K\rho_z-\rho_zK.
\label{eq:bilinear-vn}
\end{equation}
Moreover, for all $z$,
\begin{equation}
\rho_z^{T}=\rho_z,\qquad
\Tr\rho_z=\Phi_z^{T}\Phi_z=\Phi_0^{T}\Phi_0,
\qquad
\rho_z^{2}=(\Tr\rho_z)\,\rho_z.
\label{eq:rho-identities}
\end{equation}
\end{theorem}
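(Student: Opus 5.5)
The plan is to differentiate directly, using only the entire group structure of \cref{thm:dilation} together with the antisymmetry $K^{T}=-K$. Throughout, the dot denotes the complex derivative $d/dz$. This derivative is legitimate because $z\mapsto W_z=e^{zK}$ is entire, so $\Phi_z=W_z\Phi_0$ and $\rho_z=\Phi_z\Phi_z^{T}$ are entire vector- and matrix-valued functions of $z$.

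\textbf{The pseudo-Schr\"odinger equation.} Differentiating the exponential series term by term gives $\tfrac{d}{dz}e^{zK}=Ke^{zK}$. Applying this to the fixed vector $\Phi_0$ yields $\dot\Phi_z=K\Phi_z$.

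\textbf{The bilinear von Neumann equation.} Transposing the pseudo-Schr\"odinger equation, with no conjugation, gives $\dot\Phi_z^{T}=\Phi_z^{T}K^{T}=-\Phi_z^{T}K$ by \eqref{eq:K-double}. The product rule then gives
\[
\dot\rho_z=\dot\Phi_z\Phi_z^{T}+\Phi_z\dot\Phi_z^{T}=K\rho_z-\rho_zK.
\]
The only point needing care is that $K$ must appear with a minus sign on the right, and this is exactly where antisymmetry is used. Hermiticity of $K$ plays no role. Equivalently, one can write $\rho_z=W_z\rho_0W_z^{T}=e^{zK}\rho_0e^{-zK}$ and differentiate that.

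\textbf{The identities \eqref{eq:rho-identities}.} Symmetry is immediate: $(\Phi\Phi^{T})^{T}=\Phi\Phi^{T}$. For the trace, $\Tr(\Phi\Phi^{T})=\Phi^{T}\Phi$ by cyclicity, and conservation follows from orthogonality, $\Phi_z^{T}\Phi_z=\Phi_0^{T}W_z^{T}W_z\Phi_0=\Phi_0^{T}\Phi_0$. Alternatively, $\tfrac{d}{dz}\Tr\rho_z=\Tr[K,\rho_z]=0$. The rank-one identity follows from associativity:
\[
\rho_z^{2}=\Phi_z(\Phi_z^{T}\Phi_z)\Phi_z^{T}=(\Tr\rho_z)\,\rho_z.
\]
This mirrors purity $\sigma^{2}=\sigma$ for a normalized pure state. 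Here, however, the ``normalization'' $\Phi_0^{T}\Phi_0$ may differ from $1$. It can even be complex or zero in general, and I would note that the identity holds regardless.

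\textbf{Main obstacle.} There is essentially no obstacle. The one subtle step is keeping the transpose unconjugated, so that every object remains holomorphic in $z$ and the derivative of $\Phi_z^{T}$ brings in $K^{T}=-K$ rather than $K^{*}$. I would also observe that the equations hold on all of $\CC$, not only on the two distinguished slices.
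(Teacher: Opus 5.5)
Your proposal is correct and follows the paper's proof step for step. You differentiate $e^{zK}$, apply the product rule to $\Phi_z\Phi_z^{T}$ using $K^{T}=-K$, obtain conservation of $\Tr\rho_z=\Phi_z^{T}\Phi_z$ from $W_z^{T}W_z=\Id$, and get $\rho_z^{2}=(\Tr\rho_z)\rho_z$ by associativity, exactly as the paper does; your side routes via $\rho_z=e^{zK}\rho_0e^{-zK}$ and $\Tr[K,\rho_z]=0$ are also valid.
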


\begin{proof}
The wave equation is the derivative of $W_z=e^{zK}$. For the density,
\[
\dot\rho_z=(K\Phi_z)\Phi_z^{T}+\Phi_z(K\Phi_z)^{T}
=K\rho_z+\Phi_z\Phi_z^{T}K^{T}=K\rho_z-\rho_zK,
\]
where the last step uses $K^{T}=-K$ from \eqref{eq:K-double}. Symmetry of $\rho_z$
is immediate. The trace is conserved because $W_z$ is complex-orthogonal,
\[
\Tr\rho_z=\Tr(\Phi_z\Phi_z^{T})=\Phi_z^{T}\Phi_z
=\Phi_0^{T}W_z^{T}W_z\Phi_0=\Phi_0^{T}\Phi_0.
\]
Finally $\rho_z^{2}=\Phi_z(\Phi_z^{T}\Phi_z)\Phi_z^{T}=(\Tr\rho_z)\rho_z$.
\end{proof}

The first equation of \eqref{eq:bilinear-vn} is a pseudo-Schr\"odinger equation,
and the second has the exact form of the quantum von Neumann equation
$\dot\sigma=-i[H,\sigma]$ of \eqref{eq:vn-quantum}, with $K$ in the role of $-iH$.
This is not just an analogy, since the generator $K$ is Hermitian, $K^{*}=K$, so
$-iK$ is a legitimate skew-Hermitian operator, and on the imaginary slice it will
generate honest unitary dynamics (\cref{sec:imaginary}).

\begin{remark}[The pseudo-density is not a quantum density matrix]
\label{rem:false-friend-density}
In quantum mechanics the density is the Hermitian outer product
$\sigma=\psi\psi^{*}$, with $\sigma=\sigma^{*}\geq 0$ and $\Tr\sigma=1$. Here
$\rho_z=\Phi_z\Phi_z^{T}$ uses the transpose, so $\rho_z$ is complex symmetric, in
general neither Hermitian nor positive, and its trace is a complex constant. For
the physical embedding $\hpi_0:=\binom{\pi_0}{0}$ one computes
$\Phi_0^{T}\Phi_0=\sum_i\pi_{0,i}^{2}/\nu_i>0$, a $\chi^{2}$-type quantity equal to
$1+\chi^{2}(\pi_0\,\|\,\nu)$, but for general complex initial data it can be any
complex number, including zero. A genuine Hermitian density appears only on the imaginary
slice.
\end{remark}

\begin{remark}[Why the bilinear pairing, and why it is holomorphic]
\label{rem:holomorphic}
Nothing in the proof used that $z$ is real. The map $z\mapsto\rho_z$ is an entire
$\Sym_{2M}(\CC)$-valued solution of $\tfrac{d}{dz}\rho_z=[K,\rho_z]$, with
$\Phi_z^{T}\Phi_z$ and $\Tr\rho_z$ constant and $\rho_z^{2}=(\Tr\rho_z)\rho_z$
throughout. The real and imaginary slices are two lines in the domain of this one
holomorphic family. The bilinear pairing is exactly what makes the family
holomorphic, because the Hermitian outer product $\Phi_z\Phi_z^{*}$ depends on
$\bar z$ and is not holomorphic.
\end{remark}

\subsection{The pseudo-Bloch vector equation}
\label{sub:pseudo-bloch}

In quantum mechanics one expands a density matrix in a basis of traceless Hermitian
matrices (like the generalised Gell-Mann basis, which, for one qubit, are the Pauli
matrices), and obtains the Bloch vector. Our bilinear analogue requires only the
symmetric part of such a basis.

\begin{proposition}[Pseudo-Bloch equation]
\label{prop:pseudo-bloch}
Let $N=2M$ and let $\{\Sigma_a\}_{a=1}^{d}$, with $d=\tfrac{N(N+1)}2-1$, be a
trace-orthonormal basis of the traceless complex-symmetric matrices, so that
$\Tr(\Sigma_a\Sigma_b)=\delta_{ab}$, $\Tr\Sigma_a=0$, and
$\Sigma_a^{T}=\Sigma_a$. Orthonormality here is meant for the bilinear trace form
$\Tr(\Sigma_a\Sigma_b)$, with no complex conjugation, and such a basis exists
because that form is nondegenerate on the complex-symmetric matrices, for instance
the symmetrised units $(E_{ij}+E_{ji})/\sqrt2$ for $i<j$ together with an
orthonormal set of $N-1$ real traceless diagonal matrices work. The
$\Sigma_a$ are complex symmetric, not Hermitian, so they are the bilinear analogue
of the Gell-Mann matrices rather than the matrices themselves. Write
\[
\rho_z=\frac{\Tr\rho_z}{N}\,\Id+\sum_{a=1}^{d}r_a(z)\,\Sigma_a,\qquad
r_a(z)=\Tr(\rho_z\Sigma_a).
\]
Then the coordinate vector obeys the linear flow
\[
\dot r_a(z)=\sum_{b}\Omega_{ab}\,r_b(z),\qquad
\Omega_{ab}=\Tr\bigl(\Sigma_a[K,\Sigma_b]\bigr),\qquad \Omega^{T}=-\Omega.
\]
Hence the flow lies in $\OO(d,\CC)$ and preserves the bilinear quantity
$\sum_a r_a^{2}$.
\end{proposition}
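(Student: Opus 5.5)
The plan is to differentiate the coordinates directly and then use two facts: the commutator with $K$ preserves the traceless complex-symmetric subspace, and the bilinear trace form is invariant under it.

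First, the expansion is well defined. The bilinear trace form $\Tr(XY)$ is nondegenerate on $\Sym_N(\CC)$, and $\Id$ is orthogonal to the traceless part with $\Tr(\Id\cdot\Id)=N\neq0$. So every $\rho\in\Sym_N(\CC)$ decomposes uniquely as $\tfrac{\Tr\rho}{N}\Id+\sum_a r_a\Sigma_a$, and the coordinates are recovered as $r_a=\Tr(\rho\Sigma_a)$ because $\Tr\Sigma_a=0$ and $\Tr(\Sigma_a\Sigma_b)=\delta_{ab}$. By \cref{thm:bilinear}, $\rho_z$ is symmetric, so this applies for every $z$.

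Next comes the differentiation. From $\dot\rho_z=[K,\rho_z]$ of \eqref{eq:bilinear-vn} we get $\dot r_a=\Tr(\Sigma_a[K,\rho_z])$. Substituting the expansion, the identity term drops out since $[K,\Id]=0$, which leaves $\dot r_a=\sum_b\Tr(\Sigma_a[K,\Sigma_b])\,r_b=\sum_b\Omega_{ab}r_b$. We do not even need $\Tr\rho_z$ to be constant, although it is.

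Antisymmetry follows from cyclicity of the trace. Indeed $\Tr(\Sigma_a[K,\Sigma_b])=\Tr(\Sigma_aK\Sigma_b)-\Tr(\Sigma_a\Sigma_bK)$, and $\Tr(\Sigma_b[K,\Sigma_a])=\Tr(\Sigma_bK\Sigma_a)-\Tr(\Sigma_b\Sigma_aK)$. Cycling gives $\Tr(\Sigma_bK\Sigma_a)=\Tr(\Sigma_a\Sigma_bK)$ and $\Tr(\Sigma_b\Sigma_aK)=\Tr(\Sigma_aK\Sigma_b)$, so $\Omega_{ba}=-\Omega_{ab}$. This uses only cyclicity and not $K^T=-K$.

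The point where $K^T=-K$ matters is in checking that the linear map $X\mapsto[K,X]$ sends the span of the $\Sigma_b$ into itself. The statement does not need this closure to derive the ODE, since the ODE was derived directly. It is nonetheless worth recording as the conceptual reason. We have $[K,X]^T=X^TK^T-K^TX^T=-XK+KX=[K,X]$, so the commutator is symmetric, and it is traceless. Hence $\Omega$ is the matrix of $\mathrm{ad}_K$ restricted to that space in the orthonormal basis.

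Finally, $r(z)=e^{z\Omega}r(0)$ with $\Omega^T=-\Omega$ gives $e^{z\Omega}\in\OO(d,\CC)$ exactly as in the proof of \cref{thm:dilation}. Then $\tfrac{d}{dz}\sum_ar_a^2=2r^T\Omega r=0$, so the bilinear quantity $\sum_a r_a^2$ is preserved.

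The only mildly delicate step is the existence of the bilinear-orthonormal basis. The statement supplies it: the symmetrised units $(E_{ij}+E_{ji})/\sqrt2$ are orthonormal for $\Tr(XY)$ with no conjugation, and Gram–Schmidt on real traceless diagonals gives the rest. Everything else is routine trace manipulation.
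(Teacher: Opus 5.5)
Your proof is correct, and its skeleton is the paper's: expand $\rho_z$, differentiate $r_a=\Tr(\rho_z\Sigma_a)$, insert $\dot\rho_z=[K,\rho_z]$, and exponentiate an antisymmetric generator. The real difference is the antisymmetry step.

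\textbf{Paper's route.} It transposes each of the two terms in $\Omega_{ab}=\Tr(\Sigma_aK\Sigma_b)-\Tr(K\Sigma_a\Sigma_b)$, using $\Tr X=\Tr X^{T}$, $\Sigma_a^{T}=\Sigma_a$ and $K^{T}=-K$. Antisymmetry of $\Omega$ is thus presented as a consequence of the complex-orthogonal structure of $K$ and of the symmetry of the basis.

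\textbf{Your route.} You use cyclicity alone, which is the ad-invariance $\Tr(X[K,Y])=-\Tr([K,X]Y)$ of the bilinear trace form. This holds for every $K$ and every trace-orthonormal basis, symmetric or not, so it is the more economical and more general argument.

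\textbf{What each buys.} Your version gives a sharper account of where $K^{T}=-K$ is actually needed. It enters through $\dot\rho_z=[K,\rho_z]$, which you inherit from the bilinear von Neumann theorem. It also gives closure of the traceless symmetric matrices under $X\mapsto[K,X]$, which is what makes $\Omega$ the genuine matrix of $\operatorname{ad}_K$ on that space rather than a compression of it. You are also right that closure is not needed to derive the ODE, since $\rho_z$ is symmetric by construction and can be expanded directly; the paper instead invokes closure first to call the expansion consistent. The paper's version is equally short and keeps the argument in the transpose language that drives the rest of that section. Yours shows that the antisymmetry of $\Omega$ is automatic.
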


\begin{proof}
Since $K^{T}=-K$ and $\Sigma_b^{T}=\Sigma_b$, the commutator $[K,\Sigma_b]$ is
again symmetric and traceless, so the expansion is consistent. Differentiating
$r_a=\Tr(\rho_z\Sigma_a)$ and inserting $\dot\rho_z=[K,\rho_z]$ gives the indicated
linear system, using the cyclicity of the trace to move the commutator onto
$\Sigma_a$. For antisymmetry, with $\Sigma^{T}=\Sigma$, $K^{T}=-K$, and
$\Tr(X)=\Tr(X^{T})$ on each term,
\[
\Omega_{ab}=\Tr(\Sigma_aK\Sigma_b)-\Tr(K\Sigma_a\Sigma_b)
=-\Tr(\Sigma_bK\Sigma_a)+\Tr(K\Sigma_b\Sigma_a)=-\Omega_{ba}.
\]
An antisymmetric generator exponentiates into $\OO(d,\CC)$, which preserves
$\sum_a r_a^{2}$.
\end{proof}

\begin{remark}[A non-compact pseudo-Bloch sphere]
\label{rem:noncompact-bloch}
For a rank-one pseudo-density $\rho=\Phi\Phi^{T}$ the conserved quantity is a
quadric, \emph{not} a sphere,
\begin{equation}
\sum_{a=1}^{d}r_a^{2}=\frac{N-1}{N}\,(\Tr\rho)^{2}.
\label{eq:quadric}
\end{equation}
Indeed $\Tr(\rho^{2})=(\Tr\rho)^{2}$ by \eqref{eq:rho-identities}, while the
orthonormal expansion gives
$\Tr(\rho^{2})=\tfrac{(\Tr\rho)^{2}}N+\sum_a r_a^{2}$, and subtracting gives
\eqref{eq:quadric}. Because the $r_a$ are complex and the form $\sum_a r_a^{2}$
carries no conjugation, the pure-state locus is a complex affine quadric, a
non-compact surface, not the compact Bloch sphere of quantum mechanics. The
pseudo-Bloch orbits are $\OO(d,\CC)$ orbits on this quadric. Only on the imaginary
slice may an orbit close up into a genuine compact sphere.
\end{remark}

The pseudo-density is an interesting new object of the construction, a
complex-symmetric ``state'' obeying a commutator flow on a non-compact quadric.
We stress once more that we obtained it for free. Nothing in
\cref{thm:bilinear,prop:pseudo-bloch} required a modelling decision beyond the
choice of the bilinear pairing that made $W_z$ entire in the first place, and
nothing later in this paper uses these equations. They are a structural bonus
whose properties are not yet fully investigated. The next section shows that one
special line in the complex plane, the imaginary axis, removes every ``pseudo''
and returns honest quantum mechanics.

% ======================================================================

\section{Complex time and the unitary imaginary slice}
\label{sec:imaginary}

We have met the real slice $W_s$, which carries the chain, and the bilinear
pseudo-density, the holomorphic object. We now examine the second distinguished
line in the complex plane, the imaginary axis $z=i\theta$, and show that there the
same generator $K$ produces an honest finite-dimensional unitary quantum system.

\subsection{The imaginary slice is unitary quantum mechanics}
\label{sub:imag-unitary}

\begin{theorem}[Imaginary slice as a quantum system]
\label{thm:imag}
Let $N=2M$. For every real $\theta$ the matrix $U_\theta:=W_{i\theta}=e^{i\theta K}$
is real orthogonal, $U_\theta\in\SO(N,\RR)$, and therefore also unitary,
$U_\theta\in\UU(N)$. Equivalently, the imaginary slice is the Schr\"odinger
evolution
\[
U_\theta=e^{-i\theta\HNUO},\qquad \HNUO:=-K=-\sigmay\otimes A=\HNUO^{*},
\]
generated by the Hermitian Hamiltonian $\HNUO$. The genuine quantum density
$\sigma_\theta:=U_\theta\sigma_0U_\theta^{*}$, which is Hermitian, positive, and
unit-trace, obeys the ordinary von Neumann equation
\[
\dfrac{d\sigma_\theta}{d\theta}=-i[\HNUO,\sigma_\theta].
\]
\end{theorem}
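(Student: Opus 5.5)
The plan is to read everything off the structure of $K$ from \cref{thm:dilation}, namely $K^{T}=-K$ and $K^{*}=K$, together with the fact that $K=\sigmay\otimes A$ is purely imaginary because $\sigmay$ is purely imaginary and $A$ is real.

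\textbf{Step 1: reality.} The first step shows $U_\theta=e^{i\theta K}$ is a real matrix. Since $K$ is purely imaginary, $iK$ is real; concretely $i\sigmay=\left(\begin{smallmatrix}0&1\\-1&0\end{smallmatrix}\right)$, so $iK=(i\sigmay)\otimes A$ is a real antisymmetric matrix. For real $\theta$ the exponential series of $\theta\,(iK)$ has real terms, so $U_\theta$ is real. As a cross-check, the block form \eqref{eq:Wz} gives
\[
\cosh(i\theta A)=\cos(\theta A),\qquad -i\sinh(i\theta A)=\sin(\theta A),\qquad i\sinh(i\theta A)=-\sin(\theta A),
\]
all real.

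\textbf{Step 2: orthogonality and determinant.} Orthogonality is inherited from \cref{thm:dilation}: $U_\theta^{T}U_\theta=W_{i\theta}^{T}W_{i\theta}=\Id$. A real matrix that is complex-orthogonal is real orthogonal, and real orthogonal matrices are unitary since $U^{*}=U^{T}$.

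For the determinant, $\det e^{X}=e^{\Tr X}$ and $\Tr K=\Tr\sigmay\cdot\Tr A=0$, so $\det U_\theta=1$. Alternatively, $\theta\mapsto U_\theta$ is a continuous path in $\OO(N,\RR)$ starting at $\Id$, so it stays in $\SO(N,\RR)$.

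\textbf{Step 3: Hamiltonian form.} Set $\HNUO:=-K$. Then $e^{-i\theta\HNUO}=e^{i\theta K}=U_\theta$. Hermiticity of $\HNUO$ follows from $K^{*}=K$.

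\textbf{Step 4: von Neumann equation.} Define $\sigma_\theta=U_\theta\sigma_0U_\theta^{*}$. Unitary conjugation preserves Hermiticity, positivity and trace. Differentiating with $\tfrac{d}{d\theta}U_\theta=-i\HNUO U_\theta$ and $\tfrac{d}{d\theta}U_\theta^{*}=U_\theta^{*}\,i\HNUO$ (using $\HNUO^{*}=\HNUO$) gives
\[
\frac{d\sigma_\theta}{d\theta}=-i\HNUO\sigma_\theta+i\sigma_\theta\HNUO=-i[\HNUO,\sigma_\theta].
\]

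None of these steps is a real obstacle. The only point needing care is the distinction in Step 2 between complex-orthogonal and real-orthogonal: unitarity must come from combining reality with $U^{T}U=\Id$, not from orthogonality alone, since the real slice shows orthogonality by itself does not give unitarity (\cref{rem:nuo}).
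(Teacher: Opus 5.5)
Your proposal is correct and follows essentially the paper's route: the paper also observes that $-iK$ (equivalently your $iK$) is real antisymmetric, so $U_\theta$ is the exponential of a real antisymmetric matrix and hence lies in $\SO(N,\RR)\subset\UU(N)$, then reads off the Schr\"odinger form and differentiates $\sigma_\theta$. The only differences are cosmetic. You import orthogonality from \cref{thm:dilation} and get $\det U_\theta=1$ from $\Tr K=0$, whereas the paper takes both directly from the real-antisymmetric exponential.
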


\begin{proof}
Set $Y:=-iK$. From \eqref{eq:K-double}, $K^{T}=-K$ and $K$ is purely imaginary,
so
\[
Y^{T}=-iK^{T}=iK=-Y,\qquad \overline{Y}=i\,\overline{K}=i(-K)=Y .
\]
Thus $Y$ is real and antisymmetric. Since
$W_{i\theta}=e^{i\theta K}=e^{-\theta Y}$ is the exponential of a real
antisymmetric matrix, it is real orthogonal with determinant $1$, and a real
orthogonal matrix is unitary. The Schr\"odinger form is the identity
$e^{-i\theta\HNUO}=e^{i\theta K}$, and the density equation is the standard
differentiation of $\sigma_\theta=U_\theta\sigma_0U_\theta^{*}$ for unitary
$U_\theta$.
\end{proof}

Although the Hamiltonian $\HNUO=-\sigmay\otimes A$ has purely imaginary entries,
the evolution it generates is a real rotation of $\RR^{N}\subset\CC^{N}$. The
imaginary slice sits in the intersection
$\OO(N,\CC)\cap\UU(N)=\OO(N,\RR)$, the unique place where the bilinear and
Hermitian geometries agree.

\subsection{Imaginary slice Hamiltonian spectrum}
\label{sub:imag-spectrum}

Let $\{\alpha_j\}_{j=1}^{M}$ be the spectrum of the gauge $A$, with
$\alpha_j\in[-1,1]$ by \cref{lem:gauge-spec} and orthonormal eigenvectors $u_j$.

\begin{proposition}[Energy levels and eigenstates]
\label{prop:imag-spectrum}
Let $\ket{y_\pm}:=\tfrac1{\sqrt2}(1,\pm i)^{T}$ be the eigenvectors of $\sigmay$,
with $\sigmay\ket{y_\pm}=\pm\ket{y_\pm}$. Then the $2M$ orthonormal eigenpairs of
$\HNUO$ are
\[
\HNUO\,\bigl(\ket{y_\pm}\otimes u_j\bigr)=\mp\,\alpha_j\,
\bigl(\ket{y_\pm}\otimes u_j\bigr),\qquad j=1,\dots,M,
\]
so $\spec(\HNUO)=\{\pm\alpha_j\}$, symmetric about $0$. The Markov relaxation modes
and the quantum energy levels are the same list of numbers. The mode with semigroup
decay $e^{s(\alpha_j-1)}$ is the pair of energy levels $\pm\alpha_j$.
\end{proposition}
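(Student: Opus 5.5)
The plan is to exploit the tensor-product structure $\HNUO=-\sigmay\otimes A$ and the mixed-product rule for Kronecker products from \cref{sub:primer-pauli}. First I will check directly that $\ket{y_\pm}=\tfrac1{\sqrt2}(1,\pm i)^{T}$ satisfy $\sigmay\ket{y_\pm}=\pm\ket{y_\pm}$. With $\sigmay=\left(\begin{smallmatrix}0&-i\\ i&0\end{smallmatrix}\right)$ we get $\sigmay(1,\pm i)^{T}=(\pm1,i)^{T}=\pm(1,\pm i)^{T}$. These two vectors are orthonormal in the Hermitian pairing, since $\braket{y_+}{y_-}=\tfrac12(1+\bar i\cdot(-i))=\tfrac12(1-1)=0$. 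Then, for an orthonormal eigenbasis $u_j$ of the real symmetric matrix $A$ (which exists by \cref{lem:gauge-spec}), the mixed-product rule gives
\[
\HNUO\bigl(\ket{y_\pm}\otimes u_j\bigr)=-(\sigmay\ket{y_\pm})\otimes(Au_j)=\mp\alpha_j\,\bigl(\ket{y_\pm}\otimes u_j\bigr),
\]
which is the displayed eigen-relation.

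Next I will argue that these $2M$ vectors form an orthonormal basis of $\CC^{2M}$. The Hermitian inner product of Kronecker products factorises: $(a\otimes b)^{*}(c\otimes d)=(a^{*}c)(b^{*}d)$. Since $\{\ket{y_+},\ket{y_-}\}$ is orthonormal in $\CC^2$ and $\{u_j\}$ is orthonormal in $\RR^M\subset\CC^M$, the products are orthonormal, and there are $2M$ of them, so they span. Hence the list of eigenvalues, with multiplicity, is exactly $\{+\alpha_j,-\alpha_j\}_{j=1}^{M}$. This multiset is visibly invariant under negation, which gives the symmetry about $0$.

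For the final sentence I will pair this list with the Markov side using \eqref{eq:gauged-semigroup}. The semigroup is $T_t=De^{s(A-\Id)}D^{-1}$, so the mode $D u_j$ decays as $e^{s(\alpha_j-1)}$. The same $u_j$ sits in the two-dimensional invariant plane of \cref{rem:boost-planes}, and inside that plane $\HNUO$ has eigenvalues $\pm\alpha_j$. So each relaxation mode corresponds to one pair of energy levels.

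None of this is hard. The only point needing care is to use the \emph{Hermitian} rather than the bilinear pairing for orthonormality. Under the bilinear pairing $\ket{y_\pm}^{T}\ket{y_\pm}=0$ is isotropic, so the statement's "orthonormal" has to be read in the sesquilinear sense appropriate to the Hermitian operator $\HNUO$. I will state this explicitly. I will also note that when $A$ has repeated eigenvalues, including the possible collision $\alpha_j=-\alpha_k$, the multiset statement is unaffected.
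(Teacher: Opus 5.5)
Your proposal is correct and follows the paper's proof exactly: you apply the mixed-product rule $\HNUO(\ket{y_\pm}\otimes u_j)=-(\sigmay\ket{y_\pm})\otimes(Au_j)$, and orthonormality is inherited from the two factors. Your additional points only make explicit what the paper's one-line proof leaves implicit. These are that orthonormality must be read in the Hermitian pairing, since $\ket{y_\pm}$ are isotropic for the bilinear one, that $2M$ orthonormal eigenvectors give the full spectrum with multiplicity, and that the decay $e^{s(\alpha_j-1)}$ comes from \eqref{eq:gauged-semigroup}.
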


\begin{proof}
$\HNUO(\ket{y_\pm}\otimes u_j)=-(\sigmay\ket{y_\pm})\otimes(Au_j)
=\mp\alpha_j(\ket{y_\pm}\otimes u_j)$, and the vectors are orthonormal because both
factors are.
\end{proof}

\subsection{Bounded recurrent motion, the converse of the real slice}
\label{sub:imag-recurrence}

The eigenvalues of $U_\theta$ are $e^{\mp i\theta\alpha_j}$, all on the unit
circle, so the orbit of any state is bounded and quasi-periodic. If the $\alpha_j$
are commensurate the motion is exactly periodic, otherwise it fills a torus
densely. This is the exact converse of the real slice $W_s$, whose eigenvalues
$e^{\pm s\alpha_j}$ run off to $0$ and $\infty$. The same $K$ and the same
$\alpha_j$ give an unbounded boost along the real axis and a bounded rotation along
the imaginary axis, which is the $\cosh$ versus $\cos$ dichotomy. Both live in the
one entire flow $W_z$. Along the real axis the modes are boosts and the decoded law
relaxes monotonically. Along the imaginary axis the same modes are rotations and
the state recurs periodically. For the Ehrenfest example below, the clock
$\theta(t)=\arccos(e^{-2t})$, which maps the infinite relaxation half-line
$t\in[0,\infty)$ into the first quarter turn $\theta\in[0,\tfrac\pi2)$, is the
change of variables that lets a periodic motion carry a monotone one without
contradiction. We develop this in \cref{sec:ehrenfest}.

\subsection{The Born read-out}
\label{sub:imag-born}

Unlike the bilinear $\rho_z$ of \cref{sec:pseudostate}, the imaginary-slice
density $\sigma_\theta=U_\theta\sigma_0U_\theta^{*}$ is Hermitian, positive, and
unit-trace for all $\theta$, a bona fide quantum state. Its probabilities are read
by the Born rule. The probability of finding the register in state
$\ket k$ is $\bra k\sigma_\theta\ket k$, a quadratic functional of $U_\theta$. For
the Ehrenfest chain this quadratic read-out reproduces the classical law $\pi_t$
exactly under the clock $\theta(t)=\arccos(e^{-2t})$, as we show in
\cref{sec:ehrenfest}.

\begin{remark}[The Hamiltonian is the same matrix as the chain, and this is not the chain at imaginary time]
\label{rem:canonicity}
The Hamiltonian $\HNUO=-K$ is the same matrix that generates the real-time chain:
the imaginary slice is not an additional modelling choice but the value of one
entire group at $z=i\theta$, and every finite reversible chain carries a canonical
finite quantum system attached through its symmetric gauge $A$. It is nonetheless
misleading to call $U_\theta$ the Markov semigroup at imaginary time, a naive Wick
rotation $t\mapsto i\theta$ of $T_t$: the accurate statement is that $W_s$ and
$W_{i\theta}$ are two slices of one entire matrix function $W_z$, so the analytic
continuation is exact at the level of the operator, but the recovery of
probabilities is linear in the operator on the real slice, a marginal with
damping, and quadratic on the imaginary slice, the Born rule. Matching the two
outputs forces a nonlinear reparametrisation of time, not the naive $\theta=s$.
\end{remark}

\begin{example}[Imaginary slice of the two-state chain]
\label{ex:two-state-imag}
For the symmetric two-state chain, $A=\sigmax$, $M=2$, $N=4$, and
$\HNUO=-\sigmay\otimes\sigmax$. Since $(\sigmay\otimes\sigmax)^{2}=\Id_4$,
\[
U_\theta=e^{i\theta\,\sigmay\otimes\sigmax}
=\cos\theta\,\Id_4+i\sin\theta\,(\sigmay\otimes\sigmax),
\]
which is real, because $i\,\sigmay\otimes\sigmax$ is real, and orthogonal. The
energy spectrum is $\{+1,+1,-1,-1\}$, so $U_\theta$ is $2\pi$-periodic and the
system precesses on a circle instead of relaxing. This is the same $K$ whose real
slice produced the irreversible decay of \cref{ex:two-state-decode}. The only
change is the direction taken in the complex plane.
\end{example}

% ======================================================================

\section{The Ehrenfest urn in detail}
\label{sec:ehrenfest}

We now illustrate the construction on a single chain, the Ehrenfest urn,
chosen because it is the textbook model of relaxation versus recurrence and because its gauge is
exactly a spin matrix. This section is the paper's single fully worked example. Every object of
\cref{sec:setup,sec:dilation,sec:decode,sec:pseudostate,sec:imaginary}, the gauge,
the spectrum and eigenvectors, the decoding, the boosts, the imaginary slice, the
Born read-out, is computed in closed form, and the smallest instances $n=1$ and
$n=2$ are displayed as explicit matrices. Everything in \cref{sub:ehr-model}
is classical. The PQR enters at \cref{sub:ehr-spin}.

\begin{notation}[Two labels for one basis]
\label{not:ehr-basis}
The register space is $\CC^{n+1}$ and its standard basis is indexed in two ways
throughout this section. The probabilistic label is $k\in\{0,\dots,n\}$, the
number of balls in urn $a$. The spin label is $m=k-\tfrac n2$, the $J_z$
eigenvalue. We write $\ket k$ and $\ket{m}$ for the same vector under the two
labels, so that $\ket{k=n}=\ket{m=+\tfrac n2}$ is the state with every ball in
urn $a$, and $\Jz\ket k=(k-\tfrac n2)\ket k$. Where confusion is possible the
label is written out, as in $\ket{m=k-\tfrac n2}$.
\end{notation}

\subsection{The model and its factorisation over balls}
\label{sub:ehr-model}

The usual symmetric Ehrenfest urn has $n$ indistinguishable balls in two urns
$a$ and $b$. Each ball changes urn at rate $1$, independently of the other balls.
The state $k\in\Omega=\{0,\dots,n\}$ counts the balls in $a$, so $M=n+1$. With
$k$ balls in $a$ the chain moves $k\to k-1$ at rate $k$ and $k\to k+1$ at rate
$n-k$, hence
\begin{equation}
Q_{k,k-1}=k,\qquad Q_{k,k+1}=n-k,\qquad Q_{kk}=-n.
\label{eq:ehr-Q}
\end{equation}

\begin{lemma}[Symmetric binomial stationary law]
\label{lem:ehr-nu}
The chain is reversible with respect to the binomial law
$\nu_k=2^{-n}\binom nk$.
\end{lemma}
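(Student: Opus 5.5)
The plan is to verify detailed balance \eqref{eq:db} directly for the rates \eqref{eq:ehr-Q}. Since the chain only moves between neighbouring states, detailed balance is trivial for $|j-k|\ge2$, where both sides vanish, and for $j=k$. So it suffices to check the single family of nearest-neighbour identities
\[
\nu_kQ_{k,k+1}=\nu_{k+1}Q_{k+1,k},\qquad k=0,\dots,n-1 .
\]
The case $(k,k-1)$ is the same identity with the roles exchanged.

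With $\nu_k=2^{-n}\binom nk$, the left side is $2^{-n}\binom nk(n-k)$ and the right side is $2^{-n}\binom n{k+1}(k+1)$. Both equal $2^{-n}\,\frac{n!}{k!\,(n-k-1)!}$, by the elementary identity $(n-k)\binom nk=(k+1)\binom n{k+1}=n\binom{n-1}{k}$. This establishes reversibility.

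It remains to confirm that $\nu$ is the unique stationary law referred to in \cref{sub:setup-gen}. Normalisation is $\sum_k2^{-n}\binom nk=1$ by the binomial theorem, and $\nu_k>0$ for every $k$. Irreducibility holds because the rates $Q_{k,k\pm1}$ are positive on the interior and connect the whole path $0,\dots,n$. Detailed balance implies $Q^{T}\nu=0$ by summing $\nu_jQ_{jk}=\nu_kQ_{kj}$ over $j$, and irreducibility then gives uniqueness.

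As a consistency check, I would also mention the factorisation over balls announced in the subsection title. Each ball is an independent copy of the two-state chain of \cref{ex:two-state}, which has uniform stationary law $(\tfrac12,\tfrac12)$. The product law on $\{a,b\}^n$ is therefore uniform, and pushing it forward to the count $k$ gives $2^{-n}\binom nk$.

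There is no real obstacle here. The only care needed is the index bookkeeping at the boundary states $k=0$ and $k=n$, where one of the two rates vanishes. The identity is only ever used for $k\le n-1$, so both rates appearing in it are present.
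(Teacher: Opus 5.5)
Your proof is correct and follows the paper's argument. Both reduce detailed balance to the ladder relation $\nu_k(n-k)=\nu_{k+1}(k+1)$, verify it with the identity $(n-k)\binom nk=(k+1)\binom n{k+1}$, and normalise by the binomial theorem. Your additional remarks on irreducibility, uniqueness and the ball-by-ball factorisation check are accurate but go beyond what the statement requires.
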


\begin{proof}
For a birth-death chain detailed balance reduces to the ladder relation
$\nu_k(n-k)=\nu_{k+1}(k+1)$. This is the binomial identity
$\binom nk(n-k)=\binom n{k+1}(k+1)$, and the weights sum to $1$ by the
binomial theorem.
\end{proof}

The structural fact behind the urn is that the chain is $n$ independent copies of
one two-state chain, watched through the count $k$.

\begin{lemma}[Independence over balls]
\label{lem:ehr-factor}
Let $X_t^{(j)}\in\{a,b\}$ be the urn of ball $j$. The processes
$X_t^{(1)},\dots,X_t^{(n)}$ are independent two-state chains, each jumping $a\to b$
and $b\to a$ at rate $1$, and $k_t=\#\{j:X_t^{(j)}=a\}$.
Starting with all $n$ balls in $a$,
\begin{equation}
\pi_t(k)=\binom nk\,p(t)^{k}\bigl(1-p(t)\bigr)^{n-k},\qquad
p(t)=\frac{1+e^{-2t}}2,
\label{eq:ehr-binom}
\end{equation}
where $p(t)$ is the probability that one ball started in $a$ is in $a$ at time $t$.
This is the function already obtained in \cref{ex:two-state}.
\end{lemma}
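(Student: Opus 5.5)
The plan is to build the $n$-ball process explicitly as a product of independent two-state chains, show that its count is a Markov chain with generator \eqref{eq:ehr-Q}, and then read off the law at time $t$ by a product argument. First, let $X_t=(X_t^{(1)},\dots,X_t^{(n)})$ be the process on $\{a,b\}^n$ whose generator is the sum of single-ball flip generators. In this process each coordinate flips at rate $1$, independently. Independence of the coordinates is then immediate. The semigroup of a generator $\sum_j \Id\otimes\cdots\otimes q\otimes\cdots\otimes\Id$ factorises as $\bigotimes_j e^{tq}$, because the summands commute. Equivalently, one can drive each ball by its own Poisson clock of rate $1$.

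Next, I would show that $k_t=\#\{j:X_t^{(j)}=a\}$ is itself Markov with generator \eqref{eq:ehr-Q}. This is the only step needing care, and I would use the Dynkin lumpability criterion. From any configuration with $k$ balls in $a$, the total rate of moving to some configuration with count $k-1$ is $k$, since each of the $k$ balls in $a$ flips at rate $1$. Likewise the total rate to count $k+1$ is $n-k$. No other counts are reachable in one jump. These aggregated rates depend on the configuration only through $k$, so the partition by $k$ is lumpable. Hence $k_t$ is Markov with $Q_{k,k-1}=k$, $Q_{k,k+1}=n-k$ and $Q_{kk}=-n$, which is the Ehrenfest urn of the statement.

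Finally, start from the configuration with all balls in $a$, which corresponds to $k_0=n$. By \cref{ex:two-state}, each ball is in $a$ at time $t$ with probability $p(t)=\tfrac12(1+e^{-2t})$. Since the balls are independent and identically distributed, $k_t$ is a sum of $n$ i.i.d.\ Bernoulli$(p(t))$ variables. This gives the binomial formula \eqref{eq:ehr-binom}.

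As a sanity check, at $t\to\infty$ we have $p\to\tfrac12$, which recovers $\nu$ of \cref{lem:ehr-nu}. As an alternative route, one could verify \eqref{eq:ehr-binom} directly against the forward equation $\dot\pi_t=Q^{T}\pi_t$ and invoke uniqueness of solutions. I expect the main obstacle to be stating the lumpability step cleanly, since the rest is routine.
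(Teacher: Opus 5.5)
Your proof is correct and takes essentially the same route as the paper: independence of the balls, the single-ball law $p(t)=\tfrac12(1+e^{-2t})$ from \cref{ex:two-state}, and the binomial law of a sum of i.i.d.\ indicators. The only addition is your lumpability step, which checks that the count of the product chain has generator \eqref{eq:ehr-Q}; the paper omits it because it defines the urn through independent unit-rate ball moves, so it treats this identification as part of the model.
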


\begin{proof}
Each ball moves independently of the others, which is the definition of $n$
independent two-state chains, and the occupancy is their sum. The single-ball law
solves the $2\times2$ forward equation of \cref{ex:two-state}, giving
$p(t)=\tfrac12(1+e^{-2t})$. A sum of $n$ independent indicators with common
success probability $p(t)$ is $\mathrm{Binomial}(n,p(t))$.
\end{proof}

We recover \eqref{eq:ehr-binom} below twice more, spectrally from the Krawtchouk
decomposition (\cref{sub:ehr-spectral}) and from the Born rule of the
imaginary-slice quantum system on the quantum time frame.
The factorisation already explains why the matching clock will not depend on $n$.
The clock tracks a single ball, and in the Majorana picture of
\cref{sub:ehr-connections} each ball is one star.

\subsection{The gauged Ehrenfest generator is a quantum spin}
\label{sub:ehr-spin}

Apply the square-root gauge \eqref{eq:A} to the symmetric Ehrenfest urn with the
minimal uniformisation rate $\Lambda=n$. The diagonal of $A$ vanishes, since
$(n-k)+k=n$, and the off-diagonal entries are
$(\Asym)_{k+1,k}=\tfrac1n\sqrt{(k+1)(n-k)}$. These are exactly the entries of a
spin matrix.

\begin{lemma}[Symmetric Ehrenfest gauge equals a rescaled $J_x$]
\label{lem:Asym-Jx}
For the symmetric Ehrenfest chain with $\Lambda=n$,
\begin{equation}
\Asym=\frac2n\,J_x,
\label{eq:Asym-Jx}
\end{equation}
where $J_x$ is the spin-$\tfrac n2$ matrix of \cref{sub:primer-spin}, indexed by
$k=m+\tfrac n2$.
\end{lemma}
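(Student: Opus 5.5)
The plan is an entrywise comparison of two tridiagonal matrices, using the explicit form $A_{ij}=\sqrt{\nu_i/\nu_j}\,P_{ij}$ from the proof of \cref{lem:gauge-sym} and the standard ladder formula for $J_x$ recalled in \cref{sub:primer-spin}.

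\emph{The gauge side.} With $\Lambda=n$ we have $P=\Id+\tfrac1n Q$, where $Q$ is given by \eqref{eq:ehr-Q}. Since $Q_{kk}=-n$, the diagonal of $P$ vanishes. The only nonzero off-diagonal entries are
\[
P_{k,k+1}=\frac{n-k}{n},\qquad P_{k+1,k}=\frac{k+1}{n}.
\]
By \cref{lem:ehr-nu} the stationary ratio is
\[
\frac{\nu_{k+1}}{\nu_k}=\frac{\binom n{k+1}}{\binom nk}=\frac{n-k}{k+1}.
\]
Hence
\[
A_{k+1,k}=\sqrt{\frac{n-k}{k+1}}\cdot\frac{k+1}{n}=\frac1n\sqrt{(k+1)(n-k)}.
\]
Symmetry follows from \cref{lem:gauge-sym}, or by direct computation of $A_{k,k+1}$, which gives the same value. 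All other entries of $A$ are zero because $P$ is tridiagonal with zero diagonal.

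\emph{The spin side.} We write $J_x=\tfrac12(J_++J_-)$ and use
\[
J_+\ket m=\sqrt{j(j+1)-m(m+1)}\,\ket{m+1},\qquad j=\tfrac n2.
\]
With $m=k-\tfrac n2$, the key step is the factorisation
\[
j(j+1)-m(m+1)=(j-m)(j+m+1)=(n-k)(k+1).
\]
So $(J_x)_{k+1,k}=(J_x)_{k,k+1}=\tfrac12\sqrt{(k+1)(n-k)}$, and all other entries vanish. Multiplying by $\tfrac2n$ reproduces the entries of $A$ computed above, and this gives \eqref{eq:Asym-Jx}.

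\emph{Expected obstacle.} There is no real difficulty; the only care needed is bookkeeping of conventions. First, the index identification $k\leftrightarrow m=k-\tfrac n2$ must follow \cref{not:ehr-basis}, so that raising $m$ corresponds to raising $k$. Second, the Condon--Shortley convention for the phase of $J_\pm$ must agree with \cref{sub:primer-spin}, so that the entries are positive. Third, one should check the orientation of $\sqrt{\nu_i/\nu_j}$ in the gauge $A=DPD^{-1}$. In each case a mismatch would only swap two equal quantities, since both matrices are real symmetric, so the identity holds regardless.
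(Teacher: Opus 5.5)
Your proof is correct and follows the paper's route: both matrices are real symmetric tridiagonal with zero diagonal, and the band entry $(k+1,k)$ equals $\tfrac1n\sqrt{(k+1)(n-k)}$ on both sides via $(j-m)(j+m+1)=(n-k)(k+1)$; your derivation from $\nu_{k+1}/\nu_k=(n-k)/(k+1)$ just spells out the gauge entry the paper states before the lemma. One correction, to your closing remark only: the gauge orientation does matter (for this chain $D^{-1}PD$ is not even symmetric), and reversing the basis order is harmless because the band $\sqrt{(k+1)(n-k)}$ is invariant under $k\mapsto n-1-k$, not merely because both matrices are symmetric.
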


\begin{proof}
Both matrices are real symmetric tridiagonal with zero diagonal, so it suffices to
match one off-diagonal band. With $j=\tfrac n2$ and $k=m+\tfrac n2$, the relevant
entries are
\begin{align*}
(J_x)_{k+1,k}
  &=\frac12\sqrt{(j-m)(j+m+1)}
   =\frac12\sqrt{(n-k)(k+1)},\\
\frac2n(J_x)_{k+1,k}
  &=\frac1n\sqrt{(n-k)(k+1)}
   =(\Asym)_{k+1,k}.
\end{align*}
\end{proof}

\begin{example}[The smallest urns, explicitly]
\label{ex:ehr-small}
For $n=1$ the state space is $\{0,1\}$, the symmetric rates are those of
\cref{ex:two-state}, $\Lambda=1$, and
\[
\Asym=\begin{pmatrix}0&1\\1&0\end{pmatrix}=\sigmax=2J_x^{(1/2)},
\]
so the running two-state example of
\cref{ex:two-state,ex:two-state-decode,ex:two-state-imag} is the $n=1$ Ehrenfest
urn, and its whole PQR was already displayed there as explicit $4\times4$
matrices. For $n=2$ the state space is $\{0,1,2\}$, $\nu=(\tfrac14,\tfrac12,
\tfrac14)$, $\Lambda=2$, and
\[
Q=\begin{pmatrix}-2&2&0\\ 1&-2&1\\ 0&2&-2\end{pmatrix},\qquad
P=\begin{pmatrix}0&1&0\\ \tfrac12&0&\tfrac12\\ 0&1&0\end{pmatrix},\qquad
\Asym=\frac{1}{\sqrt2}\begin{pmatrix}0&1&0\\ 1&0&1\\ 0&1&0\end{pmatrix}
=J_x^{(1)},
\]
the spin-$1$ matrix itself. Its spectrum is $\{-1,0,1\}$ with orthonormal
eigenvectors
\[
u_{-1}=\tfrac12\bigl(1,\,-\sqrt2,\,1\bigr)^{T},\qquad
u_{0}=\tfrac1{\sqrt2}\bigl(1,\,0,\,-1\bigr)^{T},\qquad
u_{+1}=\tfrac12\bigl(1,\,\sqrt2,\,1\bigr)^{T}=\sqrt\nu .
\]
We use these vectors twice below, as Krawtchouk vectors in
\cref{ex:ehr-spectral-n2} and as boost planes in \cref{sub:ehr-connections}.
\end{example}

\begin{corollary}[Symmetric Ehrenfest spectra]
\label{cor:ehr-spec}
$\spec(\Asym)=\{2m/n:m=-\tfrac n2,\dots,\tfrac n2\}$, and the generator satisfies
$\spec(Q)=\{-2j:j=0,\dots,n\}$. The relaxation rates $2j$ are the integer
multiples of the single-ball rate $2$, the $j$-th mode being $j$ balls relaxing
together.
\end{corollary}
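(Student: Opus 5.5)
The plan is to read everything off \cref{lem:Asym-Jx} together with the standard spectrum of a spin matrix. The spin-$\tfrac n2$ matrix $J_x$ is unitarily (indeed real-orthogonally) conjugate to $J_z$ by a rotation about the $y$-axis, $e^{-i\frac\pi2 J_y}J_z e^{i\frac\pi2 J_y}=J_x$. This is the fact recorded in the spin primer of \cref{sub:primer-spin}. Since $J_z=\diag(m)$ with $m=-\tfrac n2,\dots,\tfrac n2$, each eigenvalue is simple. So $\spec(J_x)=\{m\}$, and by \eqref{eq:Asym-Jx} we get $\spec(\Asym)=\{2m/n\}$.

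For the generator I would use \cref{rem:gauged-gen}. With $\Lambda=n$ we have $D^{-1}Q^{T}D=n(\Asym-\Id)$, so $Q^{T}$, and hence $Q$, has spectrum $\{n(2m/n-1)\}=\{2m-n\}$. Put $j:=\tfrac n2-m$, which runs over $\{0,\dots,n\}$. Then $2m-n=-2j$, which gives $\spec(Q)=\{-2j:j=0,\dots,n\}$. The value $j=0$ is the stationary eigenvalue, matching $u_{+1}=\sqrt\nu$ in \cref{ex:ehr-small} for $n=2$.

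For the interpretive sentence I would cross-check with \cref{lem:ehr-factor} rather than prove anything deep. Each single ball has $2\times2$ generator with eigenvalues $0,-2$, as in \cref{ex:two-state}. The generator of the product chain on $\{a,b\}^n$ is the Kronecker sum of $n$ copies, so its eigenvalues are $-2j$ with $j$ the number of factors placed in the nonzero mode. The count chain is the restriction of this chain to permutation-symmetric functions. On that subspace exactly one eigenvalue survives for each $j$, namely the symmetrised product of $j$ relaxing factors. This identifies the $j$-th mode with ``$j$ balls relaxing together''.

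The only step I expect to need any care is justifying $\spec(J_x)=\spec(J_z)$, that is, the rotation conjugacy, if the primer does not already state it. As a fallback I would avoid Lie theory. The matrix $J_x$ is real symmetric tridiagonal with nonzero off-diagonal entries, so its eigenvalues are simple. The Kronecker-sum computation above already exhibits the eigenvalues $-2j$ of $Q$, one for each $j=0,\dots,n$, inside the symmetric subspace, which has dimension $n+1$. Those $n+1$ distinct values therefore exhaust the spectrum. Reversing the affine map $Q\leftrightarrow\Asym$ then yields the spectrum of $\Asym$ as well.
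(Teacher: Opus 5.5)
Your proof is correct and is essentially the paper's argument: you read $\spec(J_x)=\{m\}$ off the spin structure, rescale by $\tfrac2n$ via \cref{lem:Asym-Jx}, use the similarity $D^{-1}Q^{T}D=n(\Asym-\Id)$ from \cref{rem:gauged-gen}, and substitute $j=\tfrac n2-m$. Your Kronecker-sum argument over the $n$ independent balls, restricted to permutation-symmetric functions (which is exactly the count chain), goes beyond the paper, which leaves the ``$j$ balls relaxing together'' clause as informal commentary; the argument is valid and also gives an independent, Lie-free derivation of $\spec(Q)$, and hence of $\spec(\Asym)$.
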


\begin{proof}
$\spec(J_x)=\{-\tfrac n2,\dots,\tfrac n2\}$, so $\spec(\Asym)=\tfrac2n\spec(J_x)$.
By \eqref{eq:gauged-gen} the generator $Q$ is similar to $n(\Asym-\Id)$, whose
eigenvalue at $m$ is $n(\tfrac{2m}n-1)=2m-n$. As $m$ runs over
$\{-\tfrac n2,\dots,\tfrac n2\}$ this runs over $\{-2n,-2n+2,\dots,0\}$, which is
$\{-2j:j=0,\dots,n\}$ after the substitution $j=\tfrac n2-m$.
\end{proof}

Feeding \eqref{eq:Asym-Jx} into the generator $K=\sigmay\otimes A$ of \eqref{eq:K},
the Ehrenfest NUO generator is
\begin{equation}
\Ksym=\frac2n\,\sigmay\otimes J_x
\label{eq:Ksym}
\end{equation}
on $\CC^{2}\otimes\CC^{n+1}$. It couples an ancilla spin-$\tfrac12$, which carries
the parity label of \cref{lem:parity} and is acted on by $\sigmay$, to a register
spin-$\tfrac n2$, which carries the urn index and is acted on by $J_x$. Every
question about the Ehrenfest chain has been reduced to a single spin in a field.

The subscript in $\Asym$ and $\Ksym$ records that the rates are equal. It is
kept even though no other case appears here, because the symmetric urn is one
point of a one-parameter family in which the spin axis tilts with the bias, as
described in \cref{sec:teaser}.

\subsection{Spectral decoding: Krawtchouk vectors and the rotated spin basis}
\label{sub:ehr-spectral}

Before turning to the imaginary slice we make the real-slice picture more explicit. The gauged semigroup \eqref{eq:gauged-semigroup} is diagonalised by the
eigenvectors of $\Asym$, so the whole relaxation of the urn is a finite sum of
exponentials weighted by eigenvector overlaps. For the Ehrenfest chain these
eigenvectors are classical objects, the Krawtchouk polynomials
\cite{Krawtchouk,KarlinMcGregor}, and the spin identity \eqref{eq:Asym-Jx} explains
them in one line.
\begin{proposition}[Krawtchouk eigenvectors as a rotated spin basis]
\label{prop:ehr-kraw}
Let $u_m$, $m=-\tfrac n2,\dots,\tfrac n2$, be the orthonormal eigenvectors of
$\Asym$ with eigenvalue $2m/n$. Then the following hold.
\begin{enumerate}[leftmargin=1.7em,itemsep=2pt,topsep=2pt]
\item $u_m$ is the $J_x$ eigenvector of eigenvalue $m$. In the $J_z$ basis its
entries are, up to a column phase, the entries of the spin-$\tfrac n2$ rotation by
$\tfrac\pi2$ about the $y$ axis,
\[
u_m(k)=\bra{k-\tfrac n2}\,e^{-i\tfrac\pi2 J_y}\,\ket m,
\]
the Wigner rotation matrix at $\tfrac\pi2$, which is real.
\item The rescaled entries $\mathrm K_m(k):=u_m(k)/\sqrt{\nu_k}$ are polynomials of
degree $\tfrac n2-m$ in $k$, the (normalised, symmetric) Krawtchouk polynomials,
orthogonal with respect to the binomial weight $\nu$,
$\sum_k\nu_k\,\mathrm K_m(k)\mathrm K_{m'}(k)=\delta_{mm'}$.
\item The semigroup has the exact spectral (Karlin--McGregor) form
\begin{equation}
\bigl(e^{tQ}\bigr)_{kl}
=\frac{\sqrt{\nu_l}}{\sqrt{\nu_k}}\sum_{m=-n/2}^{n/2}
e^{(2m-n)t}\,u_m(k)\,u_m(l)
=\nu_l\sum_{m}e^{(2m-n)t}\,\mathrm K_m(k)\,\mathrm K_m(l).
\label{eq:ehr-KM}
\end{equation}
\end{enumerate}
\end{proposition}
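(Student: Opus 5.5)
Items 1 and 3 follow directly from \cref{lem:Asym-Jx} and the gauged semigroup \eqref{eq:gauged-semigroup}. Item 2 needs a genuine argument about polynomial degree, and that is the main obstacle.

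\emph{Item 1.} By \cref{lem:Asym-Jx}, $\Asym=\tfrac2n J_x$, so the eigenspace of $\Asym$ for $2m/n$ is the eigenspace of $J_x$ for $m$. This eigenspace is one-dimensional, since $\spec(J_x)$ is simple. The standard identity $e^{-i\frac\pi2 J_y}J_z e^{i\frac\pi2 J_y}=J_x$ shows that $e^{-i\frac\pi2 J_y}\ket m$ is a unit $J_x$ eigenvector with eigenvalue $m$. Hence it equals $u_m$ up to a phase. Reading off its components in the $J_z$ basis gives the Wigner matrix $d^{n/2}(\tfrac\pi2)$. This matrix is real because $-iJ_y$ is a real antisymmetric matrix in the $J_z$ basis. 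So $u_m$ can be chosen real, with the stated entries.

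\emph{Item 3.} By \eqref{eq:gauged-gen}, $e^{tQ^{T}}=D\,e^{t\,n(\Asym-\Id)}D^{-1}$. Transposing and using that $\Asym$ is symmetric gives $e^{tQ}=D^{-1}e^{tn(\Asym-\Id)}D$. Expanding spectrally, $e^{tn(\Asym-\Id)}=\sum_m e^{(2m-n)t}u_mu_m^{T}$, since the eigenvalue of $n(\Asym-\Id)$ at $m$ is $2m-n$. Taking the $(k,l)$ entry, with $D=\diag(\sqrt{\nu_k})$, gives the first form of \eqref{eq:ehr-KM}. Substituting $u_m(k)=\sqrt{\nu_k}\,\mathrm K_m(k)$ gives the second form. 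The orthogonality in item 2 is simply the orthonormality $u_m^{T}u_{m'}=\delta_{mm'}$ rewritten in terms of $\mathrm K_m$.

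\emph{Item 2.} Here one must show that $\mathrm K_m(k)=u_m(k)/\sqrt{\nu_k}$ is a polynomial in $k$ of degree exactly $\tfrac n2-m$. The eigen-equation for $u_m$ becomes $P\mathrm K_m=\tfrac{2m}n\mathrm K_m$, because $\mathrm K_m=D^{-1}u_m$ and $P=D^{-1}\Asym D$. By \eqref{eq:ehr-Q}, $P$ acts on a function $f$ by $(Pf)(k)=\tfrac kn f(k-1)+\tfrac{n-k}n f(k+1)$. I would show that $P$ maps polynomials of degree at most $d$ into themselves and is triangular on the monomials. Expanding $P(k^d)$, the coefficient of $k^{d+1}$ is $\tfrac1n-\tfrac1n=0$. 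The coefficient of $k^d$ is $\tfrac1n(-d)+\tfrac1n(n)+\tfrac1n(-d)=1-\tfrac{2d}n$. So $P$ is upper triangular on the monomial basis $1,k,\dots,k^n$ of functions on $\{0,\dots,n\}$, which is a basis since there are $n+1$ distinct points. Its diagonal entries $1-\tfrac{2d}n$, $d=0,\dots,n$, are distinct. Therefore the eigenvector for the eigenvalue $1-\tfrac{2d}n$ is a polynomial of degree exactly $d$. The eigenvalue $\tfrac{2m}n$ corresponds to $d=\tfrac n2-m$. This eigenvector is unique up to a scalar, so it coincides with $\mathrm K_m$ up to normalisation, which identifies $\mathrm K_m$ as a Krawtchouk polynomial. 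The main obstacle is this triangularity bookkeeping, in particular checking that the leading terms cancel; everything else is linear algebra.
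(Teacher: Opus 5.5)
Your proof is correct. Items 1 and 3 follow the paper's argument almost verbatim:
\begin{itemize}
\item for item 1, the conjugation $e^{-i\frac\pi2 J_y}J_ze^{i\frac\pi2 J_y}=J_x$, with reality coming from $iJ_y$ being real;
\item for item 3, the transposed gauged semigroup $e^{tQ}=D^{-1}e^{tn(\Asym-\Id)}D$ expanded over the real eigenvectors $u_m$.
\end{itemize}

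The two proofs differ in item 2. The paper writes the eigen-equation for $\mathrm K_m$ as the three-term recurrence $(n-k)\mathrm K_m(k+1)+k\,\mathrm K_m(k-1)=2m\,\mathrm K_m(k)$. This is exactly your $P\mathrm K_m=\frac{2m}{n}\mathrm K_m$ multiplied by $n$. The paper then cites Krawtchouk and Karlin--McGregor for the fact that the solutions are polynomials of degree $\frac n2-m$.

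You prove that fact instead. $P$ preserves degree because the $k^{d+1}$ terms cancel. It is therefore triangular on $1,k,\dots,k^n$ with distinct diagonal entries $1-\frac{2d}{n}$. Hence the eigenvector for $\frac{2m}{n}$ has degree exactly $\frac n2-m$. Your route makes the proof self-contained. It also shows why the degree runs opposite to $m$: the stationary mode $m=\frac n2$ gives the constant $\mathrm K_{n/2}\equiv1$. The paper's citation buys brevity and ties the vectors to the classical named family with its standard normalisation.

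Two small points are worth stating explicitly. First, your polynomial formula for $Pf$ agrees with the matrix action at the endpoints. This is because the coefficients $k/n$ and $(n-k)/n$ vanish at $k=0$ and $k=n$, so the out-of-range values $f(-1)$ and $f(n+1)$ never enter. Second, the name ``Krawtchouk'' is then justified in either of two ways. One is Gram--Schmidt uniqueness: exact degree plus $\nu$-orthonormality fix the family up to sign. The other is the classical fact that the Krawtchouk polynomials satisfy the same difference equation. Either way it adds no mathematical content beyond what you proved.
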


\begin{proof}
(i) The rotation $R=e^{-i\tfrac\pi2 J_y}$ rotates the $z$ axis to the $x$ axis, so
conjugation by $R$ gives
$R\,J_z\,R^{*}=J_x$ in every spin representation \cite{Sakurai,Hall}. Hence the
columns of $R$ in the $J_z$ basis, which are $R\ket m$, are eigenvectors of $J_x$
with eigenvalue $m$, and they are real because $iJ_y$ is a real matrix. By
\eqref{eq:Asym-Jx} they are the eigenvectors of $\Asym$.
(ii) The eigenvector equation $\Asym u_m=\tfrac{2m}n u_m$, written for the
rescaled entries $\mathrm K_m(k)=u_m(k)/\sqrt{\nu_k}$, is exactly the three-term
recurrence of the birth-death chain,
\[
(n-k)\,\mathrm K_m(k+1)+k\,\mathrm K_m(k-1)=2m\,\mathrm K_m(k),
\]
which is solved by polynomials in $k$ of the stated degrees, the Krawtchouk family
\cite{Krawtchouk,KarlinMcGregor}. Orthogonality with weight $\nu_k$ is the
orthonormality of the $u_m$ transported by the rescaling. (iii) Expand
$e^{tQ}=D^{-1}e^{s(\Asym-\Id)}D$ with $s=nt$ (the transpose of
\eqref{eq:gauged-semigroup}) in the eigenbasis, and use $s(\tfrac{2m}n-1)=(2m-n)t$.
\end{proof}

The classical fact that the Ehrenfest eigenvectors are Krawtchouk polynomials is,
from this viewpoint, the statement that the eigenvectors of $J_x$ in the $J_z$
basis are the entries of a spin-$\tfrac n2$ rotation by $\tfrac\pi2$. The spectral
sum \eqref{eq:ehr-KM} is the real-slice decoding made concrete. Each term is one
boost plane of \cref{rem:boost-planes}, and the overlap coefficients are rotation
matrix entries.

\begin{example}[The $n=2$ urn decoded spectrally, by hand]
\label{ex:ehr-spectral-n2}
Start the $n=2$ urn with both balls in $a$, that is $\pi_0=(0,0,1)^{T}$, and use
the eigenvectors of \cref{ex:ehr-small}. The gauged initial vector expands as
\[
D^{-1}\pi_0=(0,\,0,\,2)^{T}=u_{+1}-\sqrt2\,u_{0}+u_{-1},
\]
and the three modes decay with $e^{s(\alpha-1)}$ at $s=2t$, that is with factors
$1$, $e^{-2t}$, $e^{-4t}$. Applying $D$ to the sum
$u_{+1}-\sqrt2\,e^{-2t}u_{0}+e^{-4t}u_{-1}$ gives, entry by entry,
\[
\pi_t=\Bigl(
\tfrac14\bigl(1-e^{-2t}\bigr)^{2},\ \
\tfrac12\bigl(1-e^{-4t}\bigr),\ \
\tfrac14\bigl(1+e^{-2t}\bigr)^{2}
\Bigr)^{T},
\]
which is exactly the binomial law \eqref{eq:ehr-binom} with
$p(t)=\tfrac12(1+e^{-2t})$, since $\pi_t(2)=p^{2}$, $\pi_t(1)=2p(1-p)$, and
$\pi_t(0)=(1-p)^{2}$. The three spectral terms are the three boost planes of the
$n=2$ flow, with rapidities $s\alpha_m=2mt$ for $m=-1,0,1$, and the mode $m=+1$ is
the stationary plane whose boost exactly cancels the damping $e^{-s}$.
\end{example}

\subsection{The Born rule and the quantum clock}
\label{sub:ehr-born}

The imaginary slice attached to \eqref{eq:Ksym} is, in each ancilla sector, the
register spin rotation $e^{-i\theta J_x}$. Indeed, on the $\sigmay$ eigenvector
$\ket{y_\pm}$ the slice $U_\theta=e^{i\theta\Ksym}$ acts on the register as
$e^{\pm i\theta\frac2n J_x}$, by \cref{prop:imag-spectrum}, so up to the fixed
rescaling $\theta\mapsto\tfrac2n\theta$ of the slice parameter the register motion
is the rotation $e^{-i\theta J_x}$. We measure it by the Born rule and recover the
classical law. Write $\ket{m}$ for the $J_z$ eigenstate, $m=k-\tfrac n2$.

\begin{lemma}[Spin-$\tfrac n2$ Born rule for an axial start]
\label{lem:ehr-born}
For all $\theta\in\RR$ and $k\in\{0,\dots,n\}$,
\begin{equation}
\bigl|\bra{m=k-\tfrac n2}\,e^{-i\theta J_x}\,\ket{m=+\tfrac n2}\bigr|^{2}
=\binom nk\cos^{2k}\!\tfrac\theta2\,\sin^{2(n-k)}\!\tfrac\theta2 .
\label{eq:ehr-bornlaw}
\end{equation}
\end{lemma}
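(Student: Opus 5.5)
The plan is to use the factorisation of the spin-$\tfrac n2$ representation as the symmetric part of $n$ spin-$\tfrac12$ factors. This mirrors \cref{lem:ehr-factor}: each ball becomes one qubit. The highest-weight state $\ket{m=+\tfrac n2}$ is the product state $\ket{\uparrow}^{\otimes n}$ inside $\Sym^n(\CC^2)$. The collective operator $J_x$ acts there as $\sum_{r=1}^n \tfrac12\sigmax_{(r)}$, so $e^{-i\theta J_x}$ restricts to $\bigl(e^{-i\theta\sigmax/2}\bigr)^{\otimes n}$ on the symmetric subspace. Therefore
\[
e^{-i\theta J_x}\ket{m=+\tfrac n2}=\bigl(\cos\tfrac\theta2\ket{\uparrow}-i\sin\tfrac\theta2\ket{\downarrow}\bigr)^{\otimes n},
\]
using the single-qubit identity $e^{-i\theta\sigmax/2}=\cos\tfrac\theta2\,\Id-i\sin\tfrac\theta2\,\sigmax$, which follows from $(\sigmax)^2=\Id$.

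Next I expand this product and group the terms by the number $k$ of $\uparrow$ factors. The terms with exactly $k$ factors $\uparrow$ sum to
\[
\cos^{k}\tfrac\theta2\,(-i\sin\tfrac\theta2)^{n-k}\sum_{|S|=k}\ket{S}.
\]
The normalised symmetric Dicke vector with $k$ up-spins is $\binom nk^{-1/2}\sum_{|S|=k}\ket S$, and it is identified with the $J_z$ eigenstate $\ket{m=k-\tfrac n2}$. This gives the amplitude $\binom nk^{1/2}\cos^k\tfrac\theta2\,(-i\sin\tfrac\theta2)^{n-k}$. Taking the modulus squared yields \eqref{eq:ehr-bornlaw}, and the phase $(-i)^{n-k}$ disappears in the process.

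The main obstacle is to justify the identification of the abstract spin-$\tfrac n2$ matrices of \cref{sub:primer-spin} with the restriction of the collective spin to the symmetric subspace, including the phase convention on the basis $\ket m$. I would verify this directly. The collective lowering operator $\sum_r\sigma^-_{(r)}$ sends the normalised Dicke state with $k$ ups to $\sqrt{k(n-k+1)}$ times the normalised Dicke state with $k-1$ ups. This coefficient equals $\sqrt{(j+m)(j-m+1)}$ with $j=\tfrac n2$, $m=k-\tfrac n2$, which is the standard Condon--Shortley ladder coefficient with positive real entries. So the Dicke basis realises exactly the matrices of \cref{sub:primer-spin}, the same ones used in \cref{lem:Asym-Jx}.

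Any residual phase ambiguity would in any case only multiply amplitudes by unimodular factors, and these drop out of the modulus squared. As a cross-check I would set $n=1$, where the statement reduces to $|\bra{\pm}e^{-i\theta\sigmax/2}\ket{\uparrow}|^2=\cos^2\tfrac\theta2$ and $\sin^2\tfrac\theta2$.
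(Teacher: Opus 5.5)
Your proposal is correct and follows essentially the same route as the paper: realise spin-$\tfrac n2$ on the symmetric subspace of $n$ qubits, write $e^{-i\theta J_x}\ket{m=+\tfrac n2}$ as an $n$-fold tensor power of a single-qubit rotated state, and read off the binomial law. Your explicit Dicke-state amplitude computation and your ladder-coefficient check (including the remark that diagonal phase conventions cancel in the modulus squared) fill in the identification of the Dicke basis with $\ket m$, which the paper takes from the literature.
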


\begin{proof}
The spin-$\tfrac n2$ representation is the symmetric subspace of $n$ qubits, in
which $J_x$ is the restriction of $\tfrac12\sum_{j}\sigmax_{(j)}$ and the highest
weight is $\ket{m=+\tfrac n2}=\ket{+\zhat}^{\otimes n}$ \cite{Schwinger,Sakurai}.
Hence
\[
e^{-i\theta J_x}\ket{m=+\tfrac n2}
=\bigl(e^{-i\theta\sigmax/2}\ket{+\zhat}\bigr)^{\otimes n}
=\ket{\nhat(\theta)}^{\otimes n},
\]
the $n$-fold tensor power of the one-qubit state at Bloch polar angle $\theta$. A
single qubit gives $|\braket{+\zhat}{\nhat(\theta)}|^{2}=\cos^{2}\tfrac\theta2$ and
$|\braket{-\zhat}{\nhat(\theta)}|^{2}=\sin^{2}\tfrac\theta2$, so the number of
qubits read as $+\zhat$ is $\mathrm{Binomial}(n,\cos^{2}\tfrac\theta2)$, and
reading $k$ of them is \eqref{eq:ehr-bornlaw}.
\end{proof}

\begin{theorem}[Classical law equals Born law, under a nonlinear clock]
\label{thm:born}
For the symmetric chain, every $n\ge1$, $t\ge0$, and $k$,
\begin{equation}
\pi_t(k)=\bigl|\bra{m=k-\tfrac n2}\,e^{-i\theta(t)J_x}\,\ket{m=+\tfrac n2}
\bigr|^{2},
\qquad \theta(t)=\arccos\!\bigl(e^{-2t}\bigr).
\label{eq:born-pi}
\end{equation}
\end{theorem}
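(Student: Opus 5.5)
The plan is to compare two closed forms that are already available, rather than to compute anything new. By \cref{lem:ehr-factor}, started from all balls in urn $a$ (that is $k=n$, the state $\ket{m=+\tfrac n2}$ under \cref{not:ehr-basis}), the classical law is $\pi_t(k)=\binom nk p(t)^k(1-p(t))^{n-k}$ with $p(t)=\tfrac12(1+e^{-2t})$. By \cref{lem:ehr-born}, for any real $\theta$, the Born probability on the right of \eqref{eq:born-pi} is $\binom nk q(\theta)^k(1-q(\theta))^{n-k}$ with $q(\theta)=\cos^2\tfrac\theta2$, since $\sin^2\tfrac\theta2=1-q$. Two binomial laws with the same $n$ coincide termwise once their success probabilities agree. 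So the theorem reduces to the single scalar identity $q(\theta(t))=p(t)$.

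For that identity I would use the half-angle formula $\cos^2\tfrac\theta2=\tfrac12(1+\cos\theta)$. With $\theta(t)=\arccos(e^{-2t})$ this gives $\cos\theta(t)=e^{-2t}$, and hence $q(\theta(t))=\tfrac12(1+e^{-2t})=p(t)$. For $t\ge0$ we have $e^{-2t}\in(0,1]$, so $\arccos$ is defined and $\theta(t)\in[0,\tfrac\pi2)$. \Cref{lem:ehr-born} holds for every real $\theta$, so no branch issue arises. Substituting into the binomial form yields \eqref{eq:born-pi} for every $k$, and the argument is uniform in $n$, consistent with the remark after \cref{lem:ehr-factor} that the clock tracks a single ball.

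There is no real obstacle; the only point needing care is bookkeeping. One must check that the initial state $\ket{m=+\tfrac n2}$ corresponds to $k=n$, which is all balls in $a$ and is the start assumed in \cref{lem:ehr-factor}. One must also check that $\binom nk\cos^{2k}$ pairs $k$ with the success probability, in the same way that $p^k$ pairs $k$ with the number of balls in $a$. I would also note that the clock relates $\theta$ to the register rotation $e^{-i\theta J_x}$ itself, not to the slice parameter of $U_\theta$, which differs by the rescaling $\tfrac2n$ described before \cref{lem:ehr-born}. Finally, for $n=1$ the identity can be sanity-checked directly against \cref{ex:two-state}.
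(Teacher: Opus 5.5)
Your proposal is correct and is the paper's proof: the paper also combines the Born formula \eqref{eq:ehr-bornlaw} of \cref{lem:ehr-born} with the binomial law \eqref{eq:ehr-binom} of \cref{lem:ehr-factor}, and reduces everything to the half-angle identity $\cos^{2}\tfrac{\theta(t)}2=\tfrac12(1+e^{-2t})=p(t)$, with $\sin^{2}\tfrac{\theta(t)}2=1-p(t)$. Your bookkeeping checks are accurate but go beyond what the paper writes out, namely that $\ket{m=+\tfrac n2}$ is the all-in-$a$ start and that $k$ pairs with the success probability.
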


\begin{proof}
With $\cos\theta=e^{-2t}$ the half-angle identity gives
$\cos^{2}\tfrac\theta2=\tfrac{1+e^{-2t}}2=p(t)$ and
$\sin^{2}\tfrac\theta2=1-p(t)$. Substituting into \eqref{eq:ehr-bornlaw}
reproduces \eqref{eq:ehr-binom}.
\end{proof}

The Ehrenfest distribution is the Born distribution of a spin rotation. The only
nontrivial ingredient is the clock $\theta(t)$, which has two independent
readings.

\begin{remark}
The reason $\theta$, the quantum clock, cannot be
proportional to $t$ is not analytic subtlety. It is that the two read-outs have
different degrees. On the real slice the law is recovered linearly in $W_s$, by
a marginal with a damping factor. On the imaginary slice it is recovered
quadratically, by the Born rule. A linear-in-$W$ quantity and a quadratic-in-$W$
quantity cannot agree under the identity reparametrisation, and
$\theta(t)=\arccos(e^{-2t})$ is exactly the change of variables that repairs the
mismatch for this very simple chain.
\end{remark}

\begin{proposition}[Two readings of the clock]
\label{prop:clock}
For the symmetric chain, $\cos\theta(t)=e^{-2t}$ is the relaxation factor of the
slowest non-constant mode, whose generator eigenvalue is $-2$ by
\cref{cor:ehr-spec}. Moreover $\theta(t)/2$ is a statistical angle. If
$q_t=(p(t),1-p(t))$ is the law of one ball and $\delta_a=(1,0)$, then with the
Bhattacharyya coefficient $\BCo(p,q)=\sum_i\sqrt{p_iq_i}$ and the statistical
angle $\arccos\BCo$ (\cref{sec:primer-info}),
\[
\BCo(\delta_a,q_t)=\sqrt{p(t)},\qquad
\arccos\BCo(\delta_a,q_t)=\arccos\sqrt{p(t)}=\frac{\theta(t)}2 .
\]
\end{proposition}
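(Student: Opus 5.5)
The statement makes two claims, and I would prove them separately, each by a direct computation from results already established.

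For the spectral reading, I will combine \cref{cor:ehr-spec} with the spectral form \eqref{eq:ehr-KM} of \cref{prop:ehr-kraw}. The generator spectrum is $\{-2j: j=0,\dots,n\}$. The eigenvalue $0$, at $m=\tfrac n2$, is the constant mode $u_{n/2}=\sqrt\nu$. The next eigenvalue, $-2$, at $m=\tfrac n2-1$, is therefore the slowest non-constant mode. In \eqref{eq:ehr-KM} it carries the time factor $e^{(2m-n)t}=e^{-2t}$. By the definition $\theta(t)=\arccos(e^{-2t})$ in \cref{thm:born}, and since $e^{-2t}\in(0,1]$ for $t\ge0$ lies in the domain of $\arccos$, we get $\cos\theta(t)=e^{-2t}$. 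This identifies $\cos\theta(t)$ with the relaxation factor of that mode.

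For the information-geometric reading, I will compute the Bhattacharyya coefficient directly. With $\delta_a=(1,0)$ and $q_t=(p(t),1-p(t))$, the sum $\sum_i\sqrt{(\delta_a)_i(q_t)_i}$ retains only the first term, which is $\sqrt{p(t)}$. The second term is killed by the zero entry of $\delta_a$.

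The remaining step is to show $\arccos\sqrt{p(t)}=\theta(t)/2$. I will use the half-angle identity from the proof of \cref{thm:born}: $\cos^2(\theta/2)=\tfrac{1+\cos\theta}{2}=\tfrac{1+e^{-2t}}2=p(t)$.

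The only point that needs care is taking the square root without a sign error, and it comes down to checking ranges. For $t\ge0$ we have $\theta(t)\in[0,\tfrac\pi2)$, so $\theta/2\in[0,\tfrac\pi4)$ and $\cos(\theta/2)>0$. Hence $\cos(\theta/2)=\sqrt{p(t)}$. Since $\arccos$ is the inverse of $\cos$ on $[0,\pi]$, which contains $\theta/2$, applying it gives $\arccos\sqrt{p(t)}=\theta(t)/2$.

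I do not expect a real obstacle here. The one step needing attention is this principal-branch and positivity check. I would also state explicitly the choice of Bhattacharyya convention from \cref{sec:primer-info}.
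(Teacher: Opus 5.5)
Your proposal is correct and follows essentially the paper's proof. The spectral reading comes from \cref{cor:ehr-spec}; you make the factor $e^{-2t}$ explicit via \eqref{eq:ehr-KM}. The geometric reading is the direct computation $\BCo(\delta_a,q_t)=\sqrt{p(t)}$ combined with the half-angle identity $\cos^{2}\tfrac{\theta(t)}2=p(t)$ from \cref{thm:born}, and your check that $\theta(t)/2\in[0,\tfrac\pi4)$, which justifies the positive root and the principal branch of $\arccos$, is a detail the paper leaves implicit.
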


\begin{proof}
The spectral statement is \cref{cor:ehr-spec}. For the geometric one,
$\BCo(\delta_a,q_t)=\sqrt{p(t)}$ directly, and
$\cos^{2}\tfrac{\theta(t)}2=p(t)$ from \cref{thm:born} gives
$\cos\tfrac{\theta(t)}2=\sqrt{p(t)}$.
\end{proof}

The clock is therefore not an arbitrary time reparametrisation. It is the geodesic
distance the single-ball law has travelled in the Fisher--Rao geometry, which we
discuss in \cref{sec:geometry-info} and in the primers
(\cref{sec:primer-geometry,sec:primer-info}). We use the convention that the
Fisher--Rao distance between two laws is $2\arccos\BCo$, the length of the
great-circle arc between their amplitude vectors $\sqrt p$ and $\sqrt q$, so that
the clock $\theta(t)$ is that distance and the half-angle
$\theta(t)/2=\arccos\BCo$ is the Bhattacharyya angle.

\begin{proposition}[Symmetric binomial initial laws]
\label{prop:born-binomial-starts}
Suppose the symmetric urn starts from $\mathrm{Binomial}(n,p_0)$, where
$p_0=\cos^{2}(\theta_0/2)$ and $\theta_0\in[0,\pi]$. Then its law at time $t$ is the
Born distribution of the coherent state
\begin{equation}
e^{-i\theta(t)J_x}\ket{m=+\tfrac n2},\qquad
\theta(t)=\arccos\bigl(\cos\theta_0\,e^{-2t}\bigr).
\label{eq:born-binomial-clock}
\end{equation}
Equivalently, the initial coherent state at angle $\theta_0$ is rotated through
the additional angle $\theta(t)-\theta_0$ about the same axis.
\end{proposition}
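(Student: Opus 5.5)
The plan follows the proof of \cref{thm:born}, reducing everything to a single ball through \cref{lem:ehr-factor}. Start the $n$ balls independently, each in urn $a$ with probability $p_0$ and in urn $b$ otherwise. The initial count $k_0$ is then $\mathrm{Binomial}(n,p_0)$. The urn law is a function of the count only, and the dynamics of the count is the Ehrenfest chain. So the law of $k_t$ under this exchangeable start coincides with the law obtained from $\mathrm{Binomial}(n,p_0)$.

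Next compute the single-ball marginal at time $t$ from the $2\times2$ semigroup of \cref{ex:two-state}. A ball started in $a$ is in $a$ with probability $\tfrac12(1+e^{-2t})$, and a ball started in $b$ with probability $\tfrac12(1-e^{-2t})$. Mixing gives
\[
p_t=p_0\tfrac{1+e^{-2t}}2+(1-p_0)\tfrac{1-e^{-2t}}2=\tfrac12\bigl(1+(2p_0-1)e^{-2t}\bigr).
\]
Since the balls evolve independently, $\pi_t=\mathrm{Binomial}(n,p_t)$. Substituting $2p_0-1=\cos\theta_0$, which is the half-angle identity applied to $p_0=\cos^2(\theta_0/2)$, gives $p_t=\tfrac12(1+\cos\theta_0 e^{-2t})$. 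With $\theta(t)$ defined as in \eqref{eq:born-binomial-clock}, the half-angle identity gives $p_t=\cos^2(\theta(t)/2)$. This is well defined because $|\cos\theta_0e^{-2t}|\le1$ and $\arccos$ lands in $[0,\pi]$. Then \cref{lem:ehr-born} at angle $\theta(t)$ gives Born law $\binom nk\cos^{2k}\tfrac{\theta}2\sin^{2(n-k)}\tfrac{\theta}2$, which is exactly $\mathrm{Binomial}(n,p_t)$. Note that $\theta(0)=\theta_0$, so at $t=0$ the state is the coherent state at angle $\theta_0$. The rotations $e^{-i\theta J_x}$ about a fixed axis form a one-parameter group, so $e^{-i\theta(t)J_x}=e^{-i(\theta(t)-\theta_0)J_x}e^{-i\theta_0J_x}$, which is the ``equivalently'' clause.

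The only real step is the first one: justifying that a binomial start of the count is realised by independent Bernoulli$(p_0)$ starts of the balls, so that \cref{lem:ehr-factor} applies beyond the all-in-$a$ start. This holds because the Ehrenfest count is a lumpable function of the ball configuration, and every configuration with the same count yields the same count dynamics. I would state this briefly and perhaps cross-check it with the spectral form \eqref{eq:ehr-KM}. Everything after that is trigonometric bookkeeping.
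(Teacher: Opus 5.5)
Your proof is correct and follows the paper's own argument: independence of the balls gives $\mathrm{Binomial}(n,p(t))$ with $p(t)=\tfrac12+(p_0-\tfrac12)e^{-2t}$, the half-angle identity identifies this with $\cos^{2}(\theta(t)/2)$, and the Born formula of \cref{lem:ehr-born} together with the group property of rotations about $J_x$ finishes the proof. Your lumpability remark makes precise a step the paper covers only with the phrase ``independence of the balls''. The remark is that independent Bernoulli$(p_0)$ starts of the balls realise the binomial start of the count, and the count process depends only on its initial law.
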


\begin{proof}
Independence of the balls gives a binomial law with parameter
$p(t)=\tfrac12+(p_0-\tfrac12)e^{-2t}$. By the half-angle identity and the
definition of $\theta_0$,
\[
\cos^{2}\!\frac{\theta(t)}2
=\frac{1+\cos\theta_0\,e^{-2t}}2
=\frac12+\left(p_0-\frac12\right)e^{-2t}=p(t).
\]
The Born formula \eqref{eq:ehr-bornlaw} therefore gives the same binomial law.
The final statement follows from the group property of rotations about $J_x$.
\end{proof}

\subsection{The Fisher--Rao lift}
\label{sub:q1-objects}

For a probability law $\pi$ on $\{0,\dots,n\}$ write $\ket{\sqrt\pi}$ for the
column vector with components $\sqrt{\pi_k}$. For the Ehrenfest urn set

\begin{equation}
\sFR_t:=\ketbra{\sqrt{\pi_t}}{\sqrt{\pi_t}},\qquad
\bigl(\sFR_t\bigr)_{kl}=\sqrt{\pi_t(k)\,\pi_t(l)},
\label{eq:sfr-def}
\end{equation}
the rank-one projector onto the amplitude vector of the law at time $t$. This
matrix is real symmetric, positive semidefinite, and has unit trace because
$\sum_k\pi_t(k)=1$. It is therefore a legitimate density matrix for every
$t$, on the register space $\CC^{M}$ alone, and it is defined with no
dynamical input at all. It is the square-root embedding of the simplex, the
one that carries the Fisher--Rao geometry,
applied pointwise to the solution curve $t\mapsto\pi_t$.

The second object is our familiar imaginary-slice pseudo-density. For
the physical embedding the initial pseudo-wave $\Phi_0$ is real, the slice
matrices $W_{i\theta}=U_\theta$ are real orthogonal, so
$\Phi_{i\theta}=U_\theta\Phi_0$ stays real and
\begin{equation}
\rho_{i\theta}=\Phi_{i\theta}\Phi_{i\theta}^{T}
\label{eq:rho-imag}
\end{equation}
is real symmetric positive semidefinite with the constant trace
$\Phi_0^{T}\Phi_0=1+\chi^{2}(\pi_0\,\|\,\nu)$. Dividing once by this constant produces a
genuine density matrix on the doubled space $\CC^{2M}$ that evolves by the
real orthogonal conjugation $\rho\mapsto U_\theta\rho\,U_\theta^{T}$. For
the axial start $\pi_0=\delta_n$ of the urn the constant is
$1/\nu_n=2^{n}$ and the normalised initial lift is simply the basis vector
$(e_n,0)$.

Both objects are rank one, both are density matrices after at most one
normalisation, both are real symmetric, and both have constant purity. The
substance of the comparison is in the motion and in the readout, and the
Ehrenfest chain turns out to make the comparison exact.

\paragraph{An exact coincidence along the Ehrenfest orbit}
\label{sub:q1-coincidence}

\begin{proposition}[The Fisher--Rao lift is a rigid spin rotation on the
clock]
\label{prop:fr-rotation}
For the symmetric urn started at $\pi_0=\delta_n$, for every $t\ge0$ and
every $k$,
\begin{equation}
\sqrt{\pi_t(k)}=\bra{k}\,e^{-i\theta(t)\Jy}\,\ket{n},\qquad
\theta(t)=\arccos\bigl(e^{-2t}\bigr),
\label{eq:fr-identity}
\end{equation}
and the right side is real and nonnegative. Consequently
$\sFR_t=e^{-i\theta(t)\Jy}\,\ketbra{n}{n}\,e^{i\theta(t)\Jy}$, so the
Fisher--Rao lift of the relaxing law is itself a rigid rotation of a spin
coherent state, about the $y$ axis, run on exactly the Ehrenfest quantum
clock.
\end{proposition}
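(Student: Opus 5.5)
The plan is to follow the proof of \cref{lem:ehr-born} closely, but to work with amplitudes rather than probabilities, and to replace the rotation about $x$ by a rotation about $y$. The first step is to use the symmetric-subspace realisation of spin-$\tfrac n2$. In it $\Jy$ is the restriction of $\tfrac12\sum_j\sigmay_{(j)}$ and $\ket{n}=\ket{m=+\tfrac n2}=\ket{+\zhat}^{\otimes n}$, so
\[
e^{-i\theta\Jy}\ket{n}=\bigl(e^{-i\theta\sigmay/2}\ket{+\zhat}\bigr)^{\otimes n}.
\]
The one-qubit factor is real, because $-i\sigmay$ is a real matrix:
\[
e^{-i\theta\sigmay/2}\ket{+\zhat}=\cos\tfrac\theta2\,\ket{+\zhat}+\sin\tfrac\theta2\,\ket{-\zhat}.
\]
Here the sign of the $\sin$ term has to be fixed by the basis convention of \cref{sub:primer-spin}. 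This is the one point where I would check carefully against the primer.

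The second step is to expand the tensor power and project onto $\ket{k}=\ket{m=k-\tfrac n2}$. This state is the normalised symmetrisation of $k$ factors $\ket{+\zhat}$ and $n-k$ factors $\ket{-\zhat}$, so it carries the normalisation $\binom nk^{-1/2}$. Collecting the $\binom nk$ terms with exactly $k$ plus-factors gives
\[
\bra{k}e^{-i\theta\Jy}\ket{n}=\sqrt{\tbinom nk}\,\cos^{k}\tfrac\theta2\,\sin^{n-k}\tfrac\theta2 .
\]
For $\theta\in[0,\tfrac\pi2)$, which contains the whole range of the clock, both $\cos\tfrac\theta2$ and $\sin\tfrac\theta2$ are nonnegative. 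So the amplitude is real and nonnegative.

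The third step substitutes the clock. As in the proof of \cref{thm:born}, $\cos^2\tfrac{\theta(t)}2=p(t)$ and $\sin^2\tfrac{\theta(t)}2=1-p(t)$. Taking nonnegative square roots gives $\cos\tfrac{\theta(t)}2=\sqrt{p(t)}$ and $\sin\tfrac{\theta(t)}2=\sqrt{1-p(t)}$. The amplitude therefore equals $\sqrt{\binom nk p^k(1-p)^{n-k}}=\sqrt{\pi_t(k)}$ by \cref{lem:ehr-factor}, which is \eqref{eq:fr-identity}.

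The consequence follows directly. Identity \eqref{eq:fr-identity} says $\ket{\sqrt{\pi_t}}=e^{-i\theta(t)\Jy}\ket{n}$, and this vector is real. The outer product in \eqref{eq:sfr-def} is therefore $e^{-i\theta\Jy}\ketbra nn (e^{-i\theta\Jy})^{*}=e^{-i\theta\Jy}\ketbra nn e^{i\theta\Jy}$, using that $\Jy$ is Hermitian.

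The main obstacle is the sign and phase bookkeeping rather than the combinatorics. The sign convention for $\Jy$ in the primer determines whether the $\sin$ term comes out with a $+$ or a $-$. The realness of $e^{-i\theta\Jy}$ also needs checking, which I would do by observing that $i\Jy$ is a real matrix in the $J_z$ basis (the same fact already used in \cref{prop:ehr-kraw}). If the convention produces $-\sin\tfrac\theta2$, I would either adjust the sign of the rotation or note the corresponding orientation of the labels $k\leftrightarrow n-k$. The positivity claim relies only on $\theta(t)\in[0,\tfrac\pi2)$.
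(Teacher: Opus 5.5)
Your proposal is correct and is essentially the paper's proof: you realise spin-$\tfrac n2$ on the symmetric subspace, rotate each qubit by $e^{-i\theta\sigmay/2}$, read off the amplitude $\sqrt{\tbinom nk}\cos^{k}\tfrac\theta2\,\sin^{n-k}\tfrac\theta2$ on the weight-$k$ state, and substitute $\cos^{2}\tfrac{\theta(t)}2=p(t)$. The sign you flag comes out the same way as in the paper. Since $\sigmay\ket{+\zhat}=i\ket{-\zhat}$, one gets $e^{-i\theta\sigmay/2}\ket{+\zhat}=\cos\tfrac\theta2\ket{+\zhat}+\sin\tfrac\theta2\ket{-\zhat}$ with a plus sign, so no adjustment or relabelling $k\leftrightarrow n-k$ is needed.
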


\begin{proof}
For one qubit, $\sigmay\ket{+\hat z}=i\ket{-\hat z}$ gives
\[
e^{-i\theta\sigmay/2}\ket{+\hat z}
=\cos\tfrac\theta2\,\ket{+\hat z}+\sin\tfrac\theta2\,\ket{-\hat z},
\]
with real nonnegative amplitudes for $\theta\in[0,\tfrac\pi2]$. Taking the
$n$-fold tensor power, exactly as in the proof of \cref{lem:ehr-born} but
with the $y$ rotation in place of the $x$ rotation, the amplitude on the
symmetric weight-$k$ state is
$\sqrt{\binom nk}\,\cos^{k}\tfrac\theta2\,\sin^{n-k}\tfrac\theta2$. With
$\cos^{2}\tfrac{\theta(t)}2=p(t)$, which is the clock identity in the proof
of \cref{thm:born}, this amplitude equals
$\sqrt{\binom nk\,p(t)^{k}(1-p(t))^{n-k}}=\sqrt{\pi_t(k)}$.
\end{proof}

 At
$\theta=\tfrac\pi2$ the entries are $2^{-n/2}\sqrt{\binom nk}=\sqrt{\nu_k}$,
so the orbit ends at the amplitude vector of the equilibrium law, which is
simultaneously the Perron eigenvector $\sqrt\nu$ of the gauge
and the extremal column of the Krawtchouk rotation
$e^{-i\pi\Jy/2}$.

\begin{proposition}[One fixed diagonal gauge links the two lifts]
\label{prop:gauge-link}
Let $\Ephase:=\diag\bigl(i^{\,n-k}\bigr)_{k=0}^{n}$, a fixed unitary diagonal
matrix independent of $t$, and let $\ket{\psi_\theta}:=e^{-i\theta\Jx}\ket n$ be
the register coherent state of the imaginary slice. Then
\begin{equation}
\Ephase\,\Jx\,\Ephase^{-1}=\Jy,\qquad \Ephase\ket n=\ket n,\qquad
\Ephase\ket{\psi_{\theta}}=e^{-i\theta\Jy}\ket{n},
\label{eq:gauge-link}
\end{equation}
so $\ket{\sqrt{\pi_t}}=\Ephase\ket{\psi_{\theta(t)}}$ and
$\sFR_t=\Ephase\,\ketbra{\psi_{\theta(t)}}{\psi_{\theta(t)}}\,\Ephase^{*}$.
\end{proposition}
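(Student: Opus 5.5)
The plan is to reduce everything to a single conjugation identity on the raising and lowering operators, and then transport it through the exponential. Write $J_\pm=J_x\pm iJ_y$ in the spin-$\tfrac n2$ representation of \cref{sub:primer-spin}, indexed by $k=m+\tfrac n2$ as in \cref{not:ehr-basis}. Then $J_+$ has nonzero entries only at positions $(k+1,k)$ and $J_-$ only at $(k-1,k)$. Recall also $J_x=\tfrac12(J_++J_-)$ and $J_y=\tfrac1{2i}(J_+-J_-)$.

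First step: compute $\Ephase J_\pm\Ephase^{-1}$ entrywise. Conjugating by a diagonal matrix multiplies the $(k',k)$ entry by $\Ephase_{k'k'}/\Ephase_{kk}=i^{\,k-k'}$.
\begin{itemize}
\item For $J_+$ this factor is $i^{-1}=-i$, uniformly in $k$, so $\Ephase J_+\Ephase^{-1}=-iJ_+$.
\item For $J_-$ the factor is $i$, so $\Ephase J_-\Ephase^{-1}=iJ_-$.
\end{itemize}
Hence
\[
\Ephase J_x\Ephase^{-1}=\tfrac12(-iJ_++iJ_-)=\tfrac1{2i}(J_+-J_-)=J_y .
\]
This is the first identity in \eqref{eq:gauge-link}. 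The only care needed is the sign convention of the diagonal phase, and the uniformity of the ratio across $k$ is what makes it work. The second identity, $\Ephase\ket n=\ket n$, is immediate because the $k=n$ entry of $\Ephase$ is $i^0=1$.

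Second step: exponentiate. Since conjugation by an invertible matrix commutes with the power series of $\exp$, we get $\Ephase e^{-i\theta J_x}\Ephase^{-1}=e^{-i\theta J_y}$. Applying this to $\ket n=\Ephase^{-1}\ket n$ gives
\[
\Ephase\ket{\psi_\theta}=\Ephase e^{-i\theta J_x}\Ephase^{-1}\ket n=e^{-i\theta J_y}\ket n,
\]
which completes \eqref{eq:gauge-link}.

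Finally, set $\theta=\theta(t)$ and invoke \cref{prop:fr-rotation}. That result gives $\ket{\sqrt{\pi_t}}=e^{-i\theta(t)J_y}\ket n$, so $\ket{\sqrt{\pi_t}}=\Ephase\ket{\psi_{\theta(t)}}$. Taking the rank-one outer product, and using that $\Ephase$ is unitary so that $\bra{\sqrt{\pi_t}}=\bra{\psi_{\theta(t)}}\Ephase^{*}$, yields the formula for $\sFR_t$. No step is a genuine obstacle. The one place to double-check is the phase bookkeeping in the first step, together with the convention that $J_+$ raises $k$ (rather than lowering it) in the chosen indexing. If the convention were reversed, $\Ephase$ would conjugate $J_x$ to $-J_y$, so I would verify the direction on the $n=1$ case, $\Ephase=\diag(i,1)$ with $J_x=\tfrac12\sigmax$.
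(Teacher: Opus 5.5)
Your proposal is correct and is the paper's argument: $\Ephase J_\pm\Ephase^{-1}=\mp iJ_\pm$ is exactly the paper's rephasing of the lower and upper bands of $\Jx$ by $-i$ and $+i$, and after that both proofs exponentiate, apply the result to $\ket n$, and invoke \cref{prop:fr-rotation}. One caution for your $n=1$ check: in the $k$-ordering, where $\Ephase=\diag(i,1)$, the matrix of $\Jy$ is $\tfrac12\left(\begin{smallmatrix}0&i\\-i&0\end{smallmatrix}\right)$ rather than $\tfrac12\sigmay$, so comparing against $\tfrac12\sigmay$ would wrongly suggest that $\Ephase$ conjugates $\Jx$ to $-\Jy$.
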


\begin{proof}
$\Ephase$ is diagonal, so conjugation only rephases the bands. On the lower
band, with $a:=n-k$,
\[
\bigl(\Ephase\Jx\Ephase^{-1}\bigr)_{k+1,k}
=i^{\,a-1}\,(\Jx)_{k+1,k}\,i^{-a}=-\,i\,(\Jx)_{k+1,k}=(\Jy)_{k+1,k},
\]
and on the upper band the same computation gives the factor $+i$, which
matches $(\Jy)_{k,k+1}=i\,(\Jx)_{k,k+1}$. The entry $\Ephase_{nn}=i^{0}=1$
fixes $\ket n$. Exponentiating the first identity gives
$\Ephase e^{-i\theta\Jx}\Ephase^{-1}=e^{-i\theta\Jy}$, and applying both
sides to $\ket n$ gives the third identity. The final statement is
\cref{prop:fr-rotation}.
\end{proof}

The content of \cref{prop:gauge-link} is that the Fisher--Rao lift and the
imaginary-slice register state are the same object up to one fixed diagonal
phase matrix. The phases $i^{\,n-k}$ do not depend on $t$. Since $\Ephase$
is diagonal it does not touch diagonal entries, which is why both objects
hand back the law, $\sFR_t$ by inspection,
$\bra k\sFR_t\ket k=\pi_t(k)$, and $\ket{\psi_{\theta(t)}}$ through the Born
rule of \cref{thm:born}. The lift $\ket{\sqrt{\pi_t}}$ is exactly the
positive representative of the coherent state, the vector obtained by
stripping the phases, and along this orbit stripping phases costs nothing
because the phase pattern is rigid.

\begin{corollary}[Classical overlaps equal quantum overlaps along the orbit]
\label{cor:bc-fidelity}
For all $t_1,t_2\ge0$,
\begin{equation}
\BCo(\pi_{t_1},\pi_{t_2})=\braket{\sqrt{\pi_{t_1}}}{\sqrt{\pi_{t_2}}}
=\braket{\psi_{\theta(t_1)}}{\psi_{\theta(t_2)}}
=\cos^{n}\!\Bigl(\tfrac{\theta(t_2)-\theta(t_1)}2\Bigr).
\label{eq:bc-fidelity}
\end{equation}
The Bhattacharyya overlap of the laws at two times equals the fidelity
overlap of the corresponding register states, and it depends only on the
clock increment.
\end{corollary}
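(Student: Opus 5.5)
The plan is to prove the three equalities in \eqref{eq:bc-fidelity} from left to right, using only \cref{prop:fr-rotation,prop:gauge-link} and one-qubit algebra.

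\textbf{First equality.} This is definitional. The components of $\ket{\sqrt{\pi_t}}$ are the nonnegative reals $\sqrt{\pi_t(k)}$, so the Hermitian inner product carries no conjugation issue. It gives $\sum_k\sqrt{\pi_{t_1}(k)\pi_{t_2}(k)}=\BCo(\pi_{t_1},\pi_{t_2})$.

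\textbf{Second equality.} By \cref{prop:gauge-link}, $\ket{\sqrt{\pi_t}}=\Ephase\ket{\psi_{\theta(t)}}$, and $\Ephase$ is a fixed unitary independent of $t$. Hence
\[
\braket{\sqrt{\pi_{t_1}}}{\sqrt{\pi_{t_2}}}=\bra{\psi_{\theta(t_1)}}\Ephase^{*}\Ephase\ket{\psi_{\theta(t_2)}}=\braket{\psi_{\theta(t_1)}}{\psi_{\theta(t_2)}}.
\]
The time-independence of $\Ephase$ is the only point that matters here. It is what lets a single unitary cancel between two different times.

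\textbf{Third equality.} Write $\theta_j=\theta(t_j)$. By \eqref{eq:fr-identity}, $\ket{\sqrt{\pi_{t_j}}}=e^{-i\theta_j\Jy}\ket n$. The overlap is therefore
\[
\bra n e^{i\theta_1\Jy}e^{-i\theta_2\Jy}\ket n=\bra n e^{-i(\theta_2-\theta_1)\Jy}\ket n,
\]
by the group property of rotations about one axis. Using the symmetric-subspace factorisation from the proofs of \cref{lem:ehr-born,prop:fr-rotation}, with $\ket n=\ket{+\zhat}^{\otimes n}$, this matrix element is the $n$-th power of the single-qubit quantity $\bra{+\zhat}e^{-i\phi\sigmay/2}\ket{+\zhat}=\cos\tfrac\phi2$, where $\phi=\theta_2-\theta_1$. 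This yields $\cos^{n}\bigl(\tfrac{\theta(t_2)-\theta(t_1)}2\bigr)$. Since the result depends only on the difference $\phi$, the corollary's final claim that the overlap depends only on the clock increment follows at once.

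\textbf{Sign check.} Because $\cos$ is even, the order of $t_1,t_2$ is irrelevant. Because $\theta(t)\in[0,\tfrac\pi2)$, we have $|\phi|/2<\tfrac\pi4$, so the cosine is positive. This is consistent with the Bhattacharyya coefficient of two strictly positive laws being positive.

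The main (mild) obstacle is the third equality, specifically justifying that the matrix element factorises over qubits. I would handle it by noting that $e^{-i\phi\Jy}$ is the restriction to the symmetric subspace of $\bigl(e^{-i\phi\sigmay/2}\bigr)^{\otimes n}$, and that $\ket n$ is a product vector. As a fallback, one can compute directly from \eqref{eq:fr-identity}:
\[
\sum_k\binom nk\bigl(c_1c_2\bigr)^k\bigl(s_1s_2\bigr)^{n-k}=\bigl(c_1c_2+s_1s_2\bigr)^n=\cos^n\tfrac{\theta_2-\theta_1}2,
\]
where $c_j=\cos\tfrac{\theta_j}2$ and $s_j=\sin\tfrac{\theta_j}2$. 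This binomial-theorem route avoids representation theory entirely.
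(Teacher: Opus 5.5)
Your proof is correct and follows the paper's argument. The first equality holds by definition, the second follows from \cref{prop:gauge-link} and unitarity of $\Ephase$, and the third comes from factorising the overlap over the $n$ qubits. The only cosmetic difference is that you factorise the $\Jy$ form $\bra{n}e^{-i(\theta(t_2)-\theta(t_1))\Jy}\ket{n}$, whereas the paper factorises the $\Jx$ form, with one factor $\bra{+\zhat}e^{i(\theta(t_1)-\theta(t_2))\sigmax/2}\ket{+\zhat}$ per ball; the two are equivalent by the gauge link, and your binomial-theorem fallback is a valid representation-free check of the same identity.
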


\begin{proof}
The first equality is the definition of $\BCo$. The second is
\cref{prop:gauge-link} and unitarity of $\Ephase$. The third factorises over
the $n$ qubits, one factor
$\bra{+\zhat}e^{\,i(\theta(t_1)-\theta(t_2))\sigmax/2}\ket{+\zhat}
=\cos\tfrac{\theta(t_2)-\theta(t_1)}2$ per ball.
\end{proof}

\paragraph{The speed of each motion}
\label{sub:q1-speed}

Differentiating $\cos\theta(t)=e^{-2t}$ gives
\begin{equation}
\dot\theta(t)=\frac{2e^{-2t}}{\sqrt{1-e^{-4t}}}=2\cot\theta(t),
\label{eq:clock-ode}
\end{equation}
so \cref{prop:fr-rotation} can be stated as a differential equation,
\begin{equation}
\frac{d\sFR_t}{dt}=-\,2i\,\cot\theta(t)\,\bigl[\Jy,\sFR_t\bigr].
\label{eq:fr-vn}
\end{equation}

This is a von Neumann equation with a fixed axis and a time-dependent
speed. The commutator $-i[\Jy,\cdot\,]$ preserves real symmetric matrices
because $i\Jy$ is real, so \eqref{eq:fr-vn} is an honest flow on real
states. The speed $2\cot\theta$ is infinite at $t=0$, which is the familiar
statement that a law leaves a simplex vertex at unbounded statistical speed,
the Bhattacharyya angle growing like $\sqrt t$, and the speed decays to zero
as $\theta\to\tfrac\pi2$, so the motion freezes at equilibrium. The
imaginary-slice state obeys the autonomous equation
$d\rho/d\theta=-i[\HNUO,\rho]$ at unit speed in $\theta$
and never freezes. The two motions have the same rigid orbit geometry and
differ only in parametrisation, and the clock is precisely the
reparametrisation that glues them.

\subsection{Three exact connections}
\label{sub:ehr-connections}

The spin form \eqref{eq:Ksym} exposes three exact correspondences between the urn
and objects from far afield. Each is elementary once the spin identity is in hand.

\paragraph{Connection I: relaxation modes are Lorentz boosts}
By \cref{thm:dilation} and \cref{rem:boost-planes} the real slice $W_s$ acts on
the eigenplane of $A$ with eigenvalue $\alpha_m=2m/n$ as the two-by-two block
$\beta_{s\alpha_m}$ of \cref{sub:primer-boost}, a Lorentz boost of rapidity
$s\alpha_m=2mt$. The real-time Ehrenfest flow is therefore a direct sum of boosts,
one per mode, and the spectrum of the gauge is a spectrum of rapidity rates.
Relaxation to equilibrium is each mode boosting at its own constant rate
$\alpha_m$, the slowest survivor being the stationary mode $\alpha=1$ whose boost
exactly cancels the damping. Composition of the chain over disjoint time intervals
is addition of rapidities, the relativistic velocity-addition law in disguise. For
$n=2$ the three planes and their rapidities $2mt$, $m=-1,0,1$, were listed
explicitly in \cref{ex:ehr-spectral-n2}.

\paragraph{Connection II: the imaginary slice is a depth-one quantum circuit}
In the symmetric subspace of $n$ qubits, $J_x=\tfrac12\sum_j\sigmax_{(j)}$, so the
register rotation factorises as
\[
e^{-i\theta J_x}=\bigotimes_{j=1}^{n}e^{-i\theta\sigmax/2}
=\bigotimes_{j=1}^{n}R_x(\theta).
\]
This is a circuit of depth one, a single layer of identical one-qubit
$x$-rotations acting on disjoint wires with no entangling gates. The recovery of
$\pi_t$ is then a quantum sampling procedure. Prepare $n$ qubits in
$\ket{0}^{\otimes n}$, apply $R_x(\theta(t))$ to each, and measure in the
computational basis. Since $\ket 0=\ket{+\zhat}$ and each qubit is one ball, the
number of qubits read as $\ket 0$ is the number of balls in urn $a$, so that
count is distributed as the Ehrenfest law $\pi_t$ by \cref{thm:born}. The
complementary count, the number of qubits read as $\ket 1$, is the occupancy of
urn $b$ and follows $\mathrm{Binomial}(n,1-p(t))$. Being a product circuit it is
classically simulable in linear time, so the interest is structural rather than
computational. There is no quantum speedup here.

\paragraph{Connection III: relaxation is a rigid glide of Majorana stars}
A spin-$\tfrac n2$ state corresponds, by the Majorana representation
\cite{Majorana}, to $n$ points (stars) on the Bloch sphere, and a spin rotation
moves every star rigidly. The start $\ket{m=+\tfrac n2}=\ket{+\zhat}^{\otimes n}$
is the coherent state with all $n$ stars stacked at the north pole. Under
$e^{-i\theta J_x}$, an $\SO(3)$ rotation about the $x$ axis, the $n$ stars stay
coincident and slide together along a meridian to polar angle $\theta$. As $t$
grows the clock $\theta(t)=\arccos(e^{-2t})$ runs from $0$, all balls in $a$ and
all stars at the pole, to $\tfrac\pi2$, equilibrium and stars on the equator.
Ehrenfest relaxation is a rigid stellar glide down a quarter meridian.

% ======================================================================

\section{Geometry and information}
\label{sec:geometry-info}

We now record a few geometric and information-theoretic structures of the PQR for
a general reversible chain. The central point is a clean separation between two
geometries on the probability simplex, the geometry of inference and the geometry
of convergence, with the PQR singling out the second. Some basic references here
are Amari and Nagaoka \cite{AmariNagaoka} and Bengtsson and \.Zyczkowski
\cite{BengtssonZyczkowski}, and the differential-geometric and
information-theoretic vocabulary used below (metrics, pullbacks, geodesic
distance, Fisher information, divergences) is collected in
\cref{sec:primer-geometry,sec:primer-info}.

\subsection{Two information geometries on the simplex}
\label{sub:two-geometries}

There are two natural ways to turn a law $p$ on $\Omega$ into an amplitude vector.
The first is the square-root map $p\mapsto\sqrt p=(\sqrt{p_i})_i$, which lands on
the unit sphere. The second is the pseudo-wave map of the PQR,
\begin{equation}
\psi_p:=D^{-1}p=\Bigl(\tfrac{p_i}{\sqrt{\nu_i}}\Bigr)_i,
\label{eq:pseudo-wave}
\end{equation}
which is linear in $p$. Each carries a Euclidean geometry back to the simplex, and
the two are genuinely different.

\paragraph{The square-root map and the geometry of inference}
The image $\sqrt p$ lies on the unit sphere, and the round metric pulls back to
the Fisher--Rao metric, whose geodesic distance is the statistical angle
$\arccos\BCo(p,q)$ with $\BCo(p,q)=\sum_i\sqrt{p_iq_i}$ (up to the factor $2$
convention fixed in \cref{sub:ehr-born}). By the theorem of Chentsov
\cite{Cencov,AmariNagaoka}, the Fisher--Rao metric is, up to scale, the unique
Riemannian metric on the interior of the simplex that contracts under every
stochastic map. It is therefore the intrinsic geometry of statistical
distinguishability, the one that governs estimation through the Cram\'er-Rao bound
\cite{Rao} and testing through the Bhattacharyya coefficient \cite{Bhattacharyya}.

\paragraph{The pseudo-wave map and the geometry of convergence}
The pseudo-wave \eqref{eq:pseudo-wave} carries the flat geometry of
$L^{2}(1/\nu)$, with inner product
$\langle u,v\rangle_{1/\nu}=\sum_iu_iv_i/\nu_i$. Two facts make it the natural
geometry for relaxation towards equilibrium.

\begin{proposition}[The chain is a self-adjoint contraction in $L^{2}(1/\nu)$, and
$\chi^{2}$ is a squared distance]
\label{prop:chi2}
Under the pseudo-wave map the semigroup acts by
$\psi_{\pi_t}=D^{-1}T_tD\,\psi_{\pi_0}=e^{\Lambda t(A-\Id)}\psi_{\pi_0}$ with
$A=A^{T}$, so $T_t$ is self-adjoint in $L^{2}(1/\nu)$. The stationary law maps to
the unit vector $\psi_\nu=\sqrt\nu$, and the $\chi^{2}$ divergence is the
corresponding squared distance to it,
\begin{equation}
\chi^{2}(p\,\|\,\nu)=\sum_i\frac{(p_i-\nu_i)^{2}}{\nu_i}=\|\psi_p-\psi_\nu\|^{2}.
\label{eq:chi2-dist}
\end{equation}
Consequently, with $\gamma$ the spectral gap of the chain,
$\chi^{2}(\pi_t\,\|\,\nu)\le e^{-2\gamma t}\,\chi^{2}(\pi_0\,\|\,\nu)$.
\end{proposition}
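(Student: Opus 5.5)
The plan is to transport everything through the diagonal gauge $D$ and use the symmetric form of \eqref{eq:gauged-semigroup}. First, since $\psi_p=D^{-1}p$ and $\pi_t=T_t\pi_0$, we get $\psi_{\pi_t}=D^{-1}T_tD\,\psi_{\pi_0}$. By \eqref{eq:gauged-semigroup} this equals $e^{s(A-\Id)}\psi_{\pi_0}$ with $s=\Lambda t$, and \cref{lem:gauge-sym} gives $A=A^{T}$. Self-adjointness of $T_t$ in $L^{2}(1/\nu)$ then follows from $\langle u,v\rangle_{1/\nu}=(D^{-1}u)^{T}(D^{-1}v)$. Indeed,
\[
\langle T_tu,v\rangle_{1/\nu}=(D^{-1}u)^{T}e^{s(A-\Id)}(D^{-1}v)=\langle u,T_tv\rangle_{1/\nu},
\]
using the symmetry of $e^{s(A-\Id)}$.

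Second, $\psi_\nu=D^{-1}\nu=(\sqrt{\nu_i})_i=\sqrt\nu$, which is the unit vector of \cref{lem:gauge-spec}. Identity \eqref{eq:chi2-dist} is a direct entrywise computation, since $(\psi_p-\psi_\nu)_i=(p_i-\nu_i)/\sqrt{\nu_i}$.

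Third, for the decay bound, write $x_0:=\psi_{\pi_0}-\sqrt\nu$. Then $x_0\perp\sqrt\nu$ in the Euclidean sense, because $\sqrt\nu^{T}\psi_p=\sum_i p_i=1=\|\sqrt\nu\|^{2}$. Since $e^{s(A-\Id)}\sqrt\nu=\sqrt\nu$, we have $\psi_{\pi_t}-\sqrt\nu=e^{s(A-\Id)}x_0$. The orthogonal complement of $\sqrt\nu$ is invariant under the symmetric matrix $A$. By irreducibility the eigenvalue $1$ of $A$ is simple (Perron--Frobenius for $P$), so on that complement the eigenvalues of $\Lambda(A-\Id)$ are at most $-\gamma$, where $\gamma$ is the spectral gap. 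Here I take the gap to mean the smallest nonzero value of $-\spec(Q)$, equivalently $\Lambda(1-\alpha_2)$, since $\spec(Q)=\Lambda(\spec A-1)$. Expanding $x_0$ in an orthonormal eigenbasis of $A$ gives $\|e^{s(A-\Id)}x_0\|^{2}\le e^{-2\gamma t}\|x_0\|^{2}$. Combined with \eqref{eq:chi2-dist}, this is the claimed bound.

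The only delicate point is the definition of $\gamma$. The bound must use the largest nontrivial eigenvalue of $\Lambda(A-\Id)$, meaning the eigenvalue closest to $0$ other than $0$ itself. Eigenvalues of $A$ near $-1$ only make modes decay faster, so no absolute-value spectral gap is needed. I would state this convention explicitly. I would also note that simplicity of the eigenvalue $1$, which comes from irreducibility, is what makes $\gamma>0$.
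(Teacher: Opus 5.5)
Your proposal is correct and follows the paper's proof essentially step by step: conjugation by $D$ to the symmetric generator $\Lambda(A-\Id)$, the entrywise $\chi^{2}$ identity, and the spectral bound on the orthogonal complement of $\sqrt\nu$. The details you make explicit are left implicit in the paper: the check that $\psi_p-\sqrt\nu\perp\sqrt\nu$, the convention $\gamma=\Lambda(1-\alpha_2)$, and the simplicity of the eigenvalue $1$ from irreducibility.
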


\begin{proof}
The action is \eqref{eq:gauged-semigroup} conjugated by $D$, and $A-\Id$ is
symmetric, so $e^{\Lambda t(A-\Id)}$ is self-adjoint for the standard inner
product, which is the $L^{2}(1/\nu)$ inner product after the $D^{-1}$ change of
variables. Equation \eqref{eq:chi2-dist} is the identity
$\sum_i(p_i/\sqrt{\nu_i}-\sqrt{\nu_i})^{2}=\sum_i(p_i-\nu_i)^{2}/\nu_i$. The decay
follows because $\psi_p-\psi_\nu$ lies in the orthogonal complement of
$\psi_\nu$, on which the symmetric operator $\Lambda(A-\Id)$ has largest
eigenvalue at most $-\gamma$, so $\|e^{\Lambda t(A-\Id)}\|\le e^{-\gamma t}$ there
and the squared distance decays like $e^{-2\gamma t}$.
\end{proof}

The two geometries agree at $\nu$, where both reduce to the same quadratic form,
and they differ globally. The square-root, Fisher--Rao geometry is the geometry of
static inference, distinguished by Chentsov uniqueness. The pseudo-wave,
$\chi^{2}$ geometry is the geometry of dynamic convergence, distinguished by
linearising and self-adjointing the chain. The PQR is built on the second map,
which is why it sees relaxation so cleanly. The two geometries met once already,
in the Ehrenfest clock of \cref{prop:clock}, whose reading as a Bhattacharyya
angle is Fisher--Rao and whose reading as $e^{-2t}$ is spectral, that is
$\chi^{2}$.

\subsection{The bilinear state space is a non-compact quadric}
\label{sub:quadric}

The pseudo-wave of \cref{sec:pseudostate} lives on a quadric, by
\cref{rem:noncompact-bloch}. We record its shape. Write $N=2M$ and let
$\Quad=\{\Phi\in\CC^{N}:\Phi^{T}\Phi=1\}$ be the affine complex quadric of unit
bilinear length.

\begin{proposition}[The quadric is the tangent bundle of a sphere]
\label{prop:quadric}
The real and imaginary parts of $\Phi=x+iy\in\Quad$ satisfy
$|x|^{2}-|y|^{2}=1$ and $x\cdot y=0$, so $\Quad$ is the set of pairs $(x,y)$ with
$x$ on a sphere of radius $|x|\ge1$ and $y$ tangent to it. After rescaling,
$\Quad$ is the tangent bundle $TS^{N-1}$. On this quadric the imaginary slice
$W_{i\theta}$ acts as a real rotation of the base sphere, and the real slice
$W_s$ acts as a boost along the fibres, growing $|x|$ and $|y|$ without bound
while preserving $|x|^{2}-|y|^{2}=1$.
\end{proposition}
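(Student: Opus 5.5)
Write $\Phi=x+iy$ with $x,y\in\RR^{N}$ and expand the bilinear length, $\Phi^{T}\Phi=(|x|^{2}-|y|^{2})+2i\,x\cdot y$. Setting this equal to $1$ and separating real and imaginary parts gives $|x|^{2}-|y|^{2}=1$ and $x\cdot y=0$. In particular $|x|^{2}=1+|y|^{2}\ge1$, and $y$ is orthogonal to $x$, hence tangent at $x$ to the sphere of radius $|x|$.

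To identify $\Quad$ with $TS^{N-1}$ I will use the explicit map $(x,y)\mapsto(x/|x|,\,y)$. It lands in $\{(u,v):|u|=1,\ u\cdot v=0\}=TS^{N-1}$. The inverse is $(u,v)\mapsto(\sqrt{1+|v|^{2}}\,u,\ v)$. Both maps are smooth and compose to the identity, which checks directly from $|x|^{2}=1+|y|^{2}$. This is the ``rescaling'' of the statement: only the base point is normalised, and the fibre coordinate is unchanged.

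Next come the two slices, using \cref{thm:dilation} and \cref{thm:imag}. Since $W_z\in\OO(N,\CC)$, every $W_z$ preserves $\Quad$. On the imaginary slice, $U_\theta$ is real orthogonal, so $x\mapsto U_\theta x$ and $y\mapsto U_\theta y$ separately. This preserves $|x|$, $|y|$ and $x\cdot y$, so it is the induced (differential) action of the rotation $U_\theta$ on $TS^{N-1}$, a rotation of the base sphere.

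On the real slice, $W_s=e^{sK}$ with $K$ purely imaginary. Writing $K=iY'$ with $Y'=-iK$ real antisymmetric, I will use the block decomposition of \cref{rem:boost-planes}: on each invariant plane the real slice mixes real and imaginary parts through $\cosh(s\alpha_j)$ and $\sinh(s\alpha_j)$. Concretely,
\[
W_s=\cosh(s\,\Id_2\otimes A)\;+\;i\,\bigl((-i\sigmay)\otimes\sinh(sA)\bigr),
\]
where $-i\sigmay$ is real antisymmetric. Hence $x_s=C x-Sy$ and $y_s=Cy+Sx$, with $C=\Id_2\otimes\cosh(sA)$ real symmetric and $S=(-i\sigmay)\otimes\sinh(sA)$ real antisymmetric.

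The preservation of $|x|^{2}-|y|^{2}=1$ follows from orthogonality of $W_s$. For the growth claim, the Hermitian length is $|x_s|^{2}+|y_s|^{2}=\Phi_0^{*}e^{2sK}\Phi_0$ by \cref{rem:nuo}. Since $K$ is Hermitian with spectrum $\{\pm\alpha_j\}$, this grows without bound as $|s|\to\infty$ unless $\Phi_0$ lies in the kernel of $K$, that is, unless $0$ is an eigenvalue of $A$ and $\Phi_0$ sits in the corresponding eigenspace. The unbounded growth then forces both $|x|\to\infty$ and $|y|\to\infty$, since their squares differ by $1$.

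\textbf{Main obstacle.} The phrase ``boost along the fibres'' needs care. The real slice does not fix the base point $x/|x|$ in general; it mixes $x$ and $y$. I will interpret the claim through the eigenplane picture. In each $W_s$-invariant real $2$-plane spanned by $\binom{u_j}{0}$ and $\binom{0}{u_j}$ (after the real change of basis separating the $\ket{y_\pm}$ components), the pair of real and imaginary components undergoes a hyperbolic rotation of rapidity $s\alpha_j$. This conserves $|x|^{2}-|y|^{2}$ and pushes the point outward along the hyperboloid $|x|^{2}-|y|^{2}=1$.

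Under the identification with $TS^{N-1}$, this means $|v|=|y|$ grows without bound (for $\Phi_0\notin\ker K$), so the point runs off to infinity in the fibre direction while the base moves only through the bounded direction map $x/|x|$. I will state the result in that form, and note the exceptional kernel of $K$, where the real slice acts trivially.
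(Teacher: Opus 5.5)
Your proof is correct and follows the paper's route. The paper's proof consists of the expansion $\Phi^{T}\Phi=(|x|^{2}-|y|^{2})+2i\,x\cdot y$ and an appeal to the block form \eqref{eq:Wz} on a real base vector. Your explicit diffeomorphism $(x,y)\mapsto(x/|x|,y)$, the splitting $W_s=C+iS$, and the growth argument through $\Phi_0^{*}e^{2sK}\Phi_0$ elaborate that argument faithfully.

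Your two caveats are genuine refinements that the paper's one-line argument leaves implicit. First, nothing grows on $\ker K=\CC^{2}\otimes\ker A$. For the Ehrenfest urn with $n$ even, $\binom{u_0}{0}\in\Quad$, with $u_0$ the $m=0$ Krawtchouk vector, is fixed by every $W_s$. Second, the base point $x/|x|$ stays put only for special starts such as $\binom{u_j}{0}$. There $x_s=\cosh(s\alpha_j)\binom{u_j}{0}$ and $y_s=\sinh(s\alpha_j)\binom{0}{u_j}$, which is a literal motion along one fibre.

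One statement in your last paragraph needs correcting. The real span of $\binom{u_j}{0}$ and $\binom{0}{u_j}$ is not $W_s$-invariant, and no change of basis to the complex vectors $\ket{y_\pm}$ is involved. The invariant real planes are $\mathrm{span}_{\RR}\{\binom{u_j}{0},\,i\binom{0}{u_j}\}$, on whose coefficients the flow acts as $\beta_{s\alpha_j}$, and $\mathrm{span}_{\RR}\{\binom{0}{u_j},\,i\binom{u_j}{0}\}$, on which it acts as $\beta_{-s\alpha_j}$. On each plane, $|x|^{2}-|y|^{2}$ is the indefinite form of \cref{sub:primer-boost} in those coefficients, with the real-part coefficient first.

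Also, ``without bound as $|s|\to\infty$'' should be read one time direction at a time. The Hermitian length is unbounded as $s\to+\infty$ (resp.\ $s\to-\infty$) exactly when $\Phi_0$ has a component in the positive (resp.\ negative) spectral subspace of $K$. These subspaces are isotropic for $\Phi^{T}\Psi$, so a real start has components in both and escapes both ways. However, when $\ker A\neq0$ a point such as $c\,\ket{y_+}\otimes u_j+\binom{w}{0}$ lies on $\Quad$ and escapes only as $s\to+\infty$. Here $\alpha_j>0$ and $w$ is a real unit vector in $\ker A$.
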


\begin{proof}
For $\Phi=x+iy$, $\Phi^{T}\Phi=(|x|^{2}-|y|^{2})+2i\,x\cdot y$, so
$\Phi^{T}\Phi=1$ is exactly $|x|^{2}-|y|^{2}=1$ and $x\cdot y=0$. The slice
statements are the block form \eqref{eq:Wz}, which on a real base vector is a
rotation at $z=i\theta$ and a hyperbolic boost at $z=s$.
\end{proof}

The contrast with quantum mechanics is exactly the one presented in the
introduction. The compact Bloch sphere of pure quantum states is replaced by the
non-compact quadric $TS^{N-1}$, and relaxation is the projective approach to its
boundary at infinity, the null quadric $\Phi^{T}\Phi=0$, whose decoded image is
the stationary law. The tangent-bundle language, and the sense in which a quadric
``is'' a tangent bundle, are recalled in \cref{sec:primer-geometry}.

\subsection{Mixtures and the conserved label spectrum}
\label{sub:mixtures}

The pseudo-density extends to mixtures. Given an ensemble of initial laws
$\pi^{(\ell)}$ with weights $w_\ell\ge0$, $\sum_\ell w_\ell=1$, the pseudo-mixed
state is the convex combination
$\rho^{E}_z=\sum_\ell w_\ell\,\Phi^{(\ell)}_z(\Phi^{(\ell)}_z)^{T}$ of the
pseudo-densities of \cref{sec:pseudostate}.

\begin{proposition}[Mixtures evolve as mixtures, with frozen weights]
\label{prop:mix-flow}
$\rho^{E}_z=W_z\rho^{E}_0W_z^{-1}$, so a pseudo-mixed state is exactly the
lossless lift of a reversible chain started from the random law $\pi^{(\ell)}$
with probability $w_\ell$. The weights are constants of the motion.
\end{proposition}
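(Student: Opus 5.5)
The plan is a direct computation from the definition of the pseudo-mixed state, with each $\Phi^{(\ell)}_z$ expanded through \cref{def:pseudo}. For every ensemble member we have $\Phi^{(\ell)}_z=W_z\Phi^{(\ell)}_0$ with $\Phi^{(\ell)}_0=\Bc\binom{\pi^{(\ell)}}{0}$. Hence
\[
\Phi^{(\ell)}_z\bigl(\Phi^{(\ell)}_z\bigr)^{T}=W_z\Phi^{(\ell)}_0\bigl(\Phi^{(\ell)}_0\bigr)^{T}W_z^{T}.
\]
\Cref{thm:dilation} gives $W_z^{T}W_z=\Id$, so $W_z^{T}=W_z^{-1}$. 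Summing over $\ell$ with the fixed weights $w_\ell$ and using linearity of conjugation then gives $\rho^{E}_z=W_z\rho^{E}_0W_z^{-1}$. Equivalently, one can differentiate: each term satisfies the bilinear von Neumann equation of \cref{thm:bilinear}, so does the linear combination, and uniqueness for the linear ODE $\dot\rho=[K,\rho]$ closes the argument.

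The interpretive claim is that the pseudo-mixed state is the lossless lift of the chain started from a random law, and that the weights are frozen. For this I would note that the weights $w_\ell$ never enter the dynamics: the conjugation acts on each rank-one summand separately, and the summand for $\ell$ is exactly the pseudo-density of the single chain started at $\pi^{(\ell)}$. Therefore $\rho^{E}_z=\sum_\ell w_\ell\rho^{(\ell)}_z$ holds for every $z$ with the same $w_\ell$, so the weights are constants of the motion.

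On the real slice, \cref{thm:decode} shows that each $\Phi^{(\ell)}_s$ decodes to $T_t\pi^{(\ell)}$. By linearity of $T_t$, the $w$-average of these decoded laws is $T_t\bigl(\sum_\ell w_\ell\pi^{(\ell)}\bigr)$, the law of the chain whose initial law is itself drawn at random. This identifies the ensemble with the randomised start, and no information about the individual $\pi^{(\ell)}$ is lost, because the components are carried separately.

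The only delicate point is the meaning of ``lossless lift'' for a mixture. The map $\rho^{E}\mapsto$ law is not linear in $\rho^{E}$, since each term is quadratic in $\Phi^{(\ell)}$. I would therefore state the decoding at the level of the labelled ensemble $\{(w_\ell,\Phi^{(\ell)}_z)\}$ rather than at the level of $\rho^{E}_z$ alone. Everything else is the routine conjugation computation above.
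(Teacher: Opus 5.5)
Your proof is correct and is the paper's argument: conjugate each rank-one term by $W_z$ using $W_z^{T}=W_z^{-1}$ from \cref{thm:dilation}, then sum with the fixed weights $w_\ell$. One correction to your closing aside: the decoded law is in fact linear in $\rho^{E}_s$, because $\mathbf 1^{T}\pi^{(\ell)}=1$ gives $(\Phi^{(\ell)}_s)^{T}c_s=1$ for every $\ell$, with $c_s:=W_s\binom{\sqrt\nu}{0}$, hence $\Gamma\bigl(e^{-s}\Bc^{-1}\rho^{E}_s c_s\bigr)=T_t\sum_\ell w_\ell\pi^{(\ell)}$, so you do not need to retreat to the labelled ensemble.
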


\begin{proof}
Each term evolves by $\Phi^{(\ell)}_z=W_z\Phi^{(\ell)}_0$, so
$\Phi^{(\ell)}_z(\Phi^{(\ell)}_z)^{T}
=W_z\Phi^{(\ell)}_0(\Phi^{(\ell)}_0)^{T}W_z^{T}=W_z\,(\cdot)\,W_z^{-1}$ using
$W_z^{T}=W_z^{-1}$. Summing with the fixed weights gives the claim.
\end{proof}

The spectral content of a mixture is then completely described by an overlap
matrix, and it is conserved. For the statement we normalise each component. Since
the flow acts on each lift by the similarity of \cref{prop:mix-flow}, rescaling a
lift by a constant only rescales its term in the ensemble, so the normalisation
is a choice of representative and involves no loss of generality. We also record
that the statement may be read on the register alone. At $z=0$ the minus sheet
carries no mass, so the doubled pseudo-mixture
$\rho^{E}_0$ on $\CC^{2M}$ and the register pseudo-mixture on $\CC^{M}$ differ
only by a block of zeros and have the same nonzero spectrum. We work with the
register form.

\begin{theorem}[Spectral theorem and the conserved label spectrum]
\label{thm:gram}
Let $\phi^{(\ell)}:=\psi_{\pi^{(\ell)}}/\|\psi_{\pi^{(\ell)}}\|$ be the normalised
register lifts of the component laws, let
$\rho^{E}_0=\sum_\ell w_\ell\,\phi^{(\ell)}(\phi^{(\ell)})^{T}$ be the associated
normalised pseudo-mixture, and let $\mathcal{G}$ be the overlap matrix
$\mathcal{G}_{\ell m}=\kap(\pi^{(\ell)},\pi^{(m)})$, where
$\kap(p,q)=\langle\psi_p,\psi_q\rangle/(\|\psi_p\|\,\|\psi_q\|)$ is the
$\chi^{2}$ overlap, and let $\Wt=\diag(w_\ell)$. Then the nonzero spectrum of
$\rho^{E}_0$ equals the nonzero spectrum of the weighted overlap matrix
$G=\Wt^{1/2}\mathcal{G}\,\Wt^{1/2}$. Moreover the entire characteristic
polynomial of $\rho^{E}_z$, and every power trace $\Tr[(\rho^{E}_z)^{k}]$, is
conserved along the whole flow $z\in\CC$.
\end{theorem}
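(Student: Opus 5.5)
The proof has two independent parts: a Gram-matrix factorisation at $z=0$, and similarity invariance along the flow.

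For the first part, write the normalised pseudo-mixture as a factorised product. Let $F$ be the $M\times L$ real matrix whose $\ell$-th column is $\sqrt{w_\ell}\,\phi^{(\ell)}$, that is $F=\Phi\,\Wt^{1/2}$ with $\Phi$ the matrix of columns $\phi^{(\ell)}$. Then
\[
\rho^{E}_0=\sum_\ell w_\ell\,\phi^{(\ell)}(\phi^{(\ell)})^{T}=FF^{T}.
\]
The standard fact that $XY$ and $YX$ have the same nonzero eigenvalues with the same algebraic multiplicities gives
\[
\operatorname{nz-spec}(FF^{T})=\operatorname{nz-spec}(F^{T}F).
\]
One can see this from $\det(\lambda\Id_M-XY)\lambda^{L}=\det(\lambda\Id_L-YX)\lambda^{M}$. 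It remains to identify $F^{T}F$. Its $(\ell,m)$ entry is $\sqrt{w_\ell w_m}\,(\phi^{(\ell)})^{T}\phi^{(m)}$, and since $\phi^{(\ell)}=\psi_{\pi^{(\ell)}}/\|\psi_{\pi^{(\ell)}}\|$ with $\psi_p=D^{-1}p$ real, the bilinear pairing equals the Euclidean inner product $\langle\psi_p,\psi_q\rangle/(\|\psi_p\|\|\psi_q\|)=\kap(p,q)$. Hence $F^{T}F=\Wt^{1/2}\mathcal G\,\Wt^{1/2}=G$.

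I would also note the passage from the doubled space to the register. The doubled lift is $\Bc\hpi_0=V\hD^{-1}\binom{\pi_0}{0}=\binom{D^{-1}\pi_0}{0}$, so $\rho^{E}_0$ on $\CC^{2M}$ is $\diag(\rho^{E}_{0,\mathrm{reg}},0)$. Its characteristic polynomial is therefore that of the register form times $\lambda^{M}$, which justifies working on the register as the remark before the statement does. Normalising each lift only rescales its term, so it is harmless.

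For the conservation part, invoke \cref{prop:mix-flow}: $\rho^{E}_z=W_z\rho^{E}_0W_z^{-1}$ with $W_z\in\OO(2M,\CC)$ invertible for every $z\in\CC$, by \cref{thm:dilation}. Similar matrices share characteristic polynomials, since
\[
\det(\lambda\Id-W_z\rho W_z^{-1})=\det\bigl(W_z(\lambda\Id-\rho)W_z^{-1}\bigr)=\det(\lambda\Id-\rho).
\]
Power traces are conserved by cyclicity: $\Tr[(W_z\rho W_z^{-1})^{k}]=\Tr[W_z\rho^{k}W_z^{-1}]=\Tr\rho^{k}$. Alternatively, differentiating $\Tr[(\rho^E_z)^k]$ along $\dot\rho=[K,\rho]$ gives $k\Tr(\rho^{k-1}[K,\rho])=0$. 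This holds for complex $z$ because nothing uses reality.

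The only delicate point is bookkeeping rather than analysis. The result concerns the nonzero spectrum with multiplicities, and for a complex-symmetric $\rho^{E}_z$ (not diagonalisable in general for $z\notin i\RR$) "spectrum" should be read as the roots of the characteristic polynomial. That is exactly what the similarity argument controls, so I would phrase the conclusion that way. I would not claim diagonalisability. I would also check that the $\mathcal G$ entries are real and positive for probability laws, so $G$ is a genuine positive semidefinite Gram matrix. This is consistent but not needed for the spectral equality.
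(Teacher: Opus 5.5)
Your proposal is correct and is essentially the paper's proof: factor $\rho^{E}_0=FF^{T}$ with $F=\Phi\Wt^{1/2}$, pass to $F^{T}F=\Wt^{1/2}\mathcal{G}\,\Wt^{1/2}$ using the fact that $XY$ and $YX$ share nonzero spectra, and get conservation from the similarity $\rho^{E}_z=W_z\rho^{E}_0W_z^{-1}$ of \cref{prop:mix-flow}. One small correction to your closing caveat: $\rho^{E}_z$ is similar to the real symmetric matrix $\rho^{E}_0$, so it is diagonalisable for every $z\in\CC$ (though not orthogonally), and your argument does not need this in any case.
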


\begin{proof}
Writing the normalised lifts as the columns of $\Phi$, one has
$\rho^{E}_0=\Phi \Wt\Phi^{T}$ with $\Phi^{T}\Phi=\mathcal{G}$, and the fact that
$XY$ and $YX$ share nonzero spectra \cite[\S1.3]{HornJohnson}, applied to
$X=\Phi \Wt^{1/2}$ and $Y=\Wt^{1/2}\Phi^{T}$, gives
$\spec^{*}(\rho^{E}_0)=\spec(\Wt^{1/2}\mathcal{G}\Wt^{1/2})$. Conservation is
\cref{prop:mix-flow}, since similar matrices have the same characteristic
polynomial and $\Tr[(W_z\rho W_z^{-1})^{k}]=\Tr[\rho^{k}]$ by cyclicity.
\end{proof}

\begin{corollary}[Conserved pseudo-mixedness]
\label{cor:pseudomix}
For two components with weights $\alpha,1-\alpha$ and $\chi^{2}$ overlap $\kap$,
the pseudo-mixedness
$1-\Tr[(\rho^{E})^{2}]=2\alpha(1-\alpha)(1-\kap^{2})$ is constant along the flow.
\end{corollary}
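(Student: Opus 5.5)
The plan is to specialise \cref{thm:gram} to two components and then read off $\Tr[(\rho^{E})^{2}]$ from the $2\times2$ weighted overlap matrix. Take the normalised register lifts $\phi^{(1)},\phi^{(2)}$ with weights $\Wt=\diag(\alpha,1-\alpha)$. The overlap matrix has unit diagonal, because each lift is normalised, and off-diagonal entry $\kap=\kap(\pi^{(1)},\pi^{(2)})$. Hence
\[
G=\Wt^{1/2}\mathcal G\,\Wt^{1/2}=\begin{pmatrix}\alpha&\sqrt{\alpha(1-\alpha)}\,\kap\\ \sqrt{\alpha(1-\alpha)}\,\kap&1-\alpha\end{pmatrix}.
\]

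Next I use that the power trace $\Tr[X^{2}]$ is the sum of the squares of the eigenvalues, so it depends only on the nonzero spectrum. \Cref{thm:gram} identifies the nonzero spectrum of $\rho^{E}_0$ with that of $G$, which gives $\Tr[(\rho^{E}_0)^{2}]=\Tr[G^{2}]$. A direct computation yields
\[
\Tr[G^{2}]=\alpha^{2}+(1-\alpha)^{2}+2\alpha(1-\alpha)\kap^{2}.
\]
Since $1=(\alpha+(1-\alpha))^{2}=\alpha^{2}+(1-\alpha)^{2}+2\alpha(1-\alpha)$, subtracting gives
\[
1-\Tr[(\rho^{E}_0)^{2}]=2\alpha(1-\alpha)(1-\kap^{2}).
\]
The same identity follows more directly from
\[
\Tr[(\rho^{E}_0)^{2}]=\sum_{\ell,m}w_\ell w_m\bigl((\phi^{(\ell)})^{T}\phi^{(m)}\bigr)^{2},
\]
using cyclicity of the trace on the rank-one terms. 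I would mention this as a check that the bilinear pairing produces $\kap^{2}$ and not $|\kap|^{2}$. Here the lifts are real, so the two agree.

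Finally, for constancy along the flow, I invoke the last assertion of \cref{thm:gram}: every power trace $\Tr[(\rho^{E}_z)^{k}]$ is conserved for $z\in\CC$, in particular for $k=2$. This follows from $\rho^{E}_z=W_z\rho^{E}_0W_z^{-1}$ (\cref{prop:mix-flow}) and cyclicity.

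The only point needing care, and what I expect to be the main obstacle, is bookkeeping. The statement should be read on the normalised pseudo-mixture, as fixed before \cref{thm:gram}. The doubled and register forms share their nonzero spectrum, as noted there, so $\Tr[(\rho^{E})^{2}]$ is the same on either space. Once this is settled the computation is immediate.
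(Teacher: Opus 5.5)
Your proposal is correct and follows essentially the same route as the paper. Both specialise \cref{thm:gram} to $\Wt=\diag(\alpha,1-\alpha)$ and a unit-diagonal $\mathcal{G}$ with off-diagonal entry $\kap$, identify $\Tr[(\rho^{E})^{2}]$ with $\Tr[G^{2}]$, and invoke conservation of power traces; the only cosmetic difference is that the paper writes $\Tr[G^{2}]=(\Tr G)^{2}-2\det G=1-2\alpha(1-\alpha)(1-\kap^{2})$, while you expand $\Tr[G^{2}]$ entrywise.
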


\begin{proof}
With $\Wt=\diag(\alpha,1-\alpha)$ and
$\mathcal{G}=\left(\begin{smallmatrix}1&\kap\\ \kap&1\end{smallmatrix}\right)$,
$G$ has trace $1$ and determinant $\alpha(1-\alpha)(1-\kap^{2})$, so
$\Tr[(\rho^{E})^{2}]=(\Tr G)^{2}-2\det G=1-2\det G$, and conservation is
\cref{thm:gram}.
\end{proof}

On the imaginary slice the lift stays real, so $\rho^{E}_\theta$ is a genuine
density matrix and its von Neumann entropy is conserved, while the information an
observer can extract about the label $\ell$ from the register decays to zero at
the spectral-gap rate. The Fisher--Rao square-root construction gives the same
algebraic form with the $\chi^{2}$ overlap $\kap$ replaced by the Bhattacharyya
overlap $\BCo$, which is the mixture-level shadow of the same
inference-versus-convergence dichotomy. In the language of quantum state
geometry, the real lifts form the real, time-reversal-invariant slice of the
space of pure and mixed states, carrying the restriction of the Fubini-Study and
Bures metrics, and the flat $\chi^{2}$ structure above is the commutative shadow
of the quantum $\chi^{2}$ geometry of detailed-balance semigroups, see Petz
\cite{Petz} and \cite[Ch.~9 and 14]{BengtssonZyczkowski} for these metrics.

% ======================================================================

\section{What we leave for the complete paper}
\label{sec:teaser}

This section provides a concise preview of the full paper where
the four families below are treated in detail.

\paragraph{The asymmetric urn and the tilted spin}
Give the two urns different rates, so that each ball jumps from $a$ to $b$ at
rate $\lambda$ and back at rate $\mu$, normalised by $\lambda+\mu=2$. The
equilibrium is no longer the symmetric binomial but the skewed binomial with
success probability $\mu/2$, and one might expect the spin picture to break. It
does not. The gauged generator is again a spin operator, now taken along the
tilted axis
\[
\nhat=\Bigl(\sqrt{\lambda\mu},\ 0,\ \tfrac{\mu-\lambda}2\Bigr),
\]
which is a unit vector precisely because $\lambda+\mu=2$. Biasing the urn is a
rotation of the spin axis by a single element of $\SO(3)$ and nothing more.
The relaxation spectrum does not move at all, since conjugation by
a rotation cannot change eigenvalues, so the asymmetric urn has the same
relaxation rates $\{-2j\}$ as the symmetric one and only its equilibrium is
reweighted. This rigidity is invisible from the classical side, where the biased
generator looks like a genuinely different birth-death chain.

The Born read-out survives under the quantum clock time-frame
\[
\sin^{2}\frac{\theta(t)}2=\frac{1-e^{-2t}}{2\mu},
\]
which recovers the symmetric clock $\theta(t)=\arccos(e^{-2t})$ at
$\mu=\lambda=1$. This clock is available for every $t$ exactly when
$\mu\ge\tfrac12$. For $\mu<\tfrac12$ the right-hand side exceeds one once
$1-e^{-2t}>2\mu$, and no rotation about a fixed axis reaches the equilibrium
from the pole. Born representability of a classical trajectory is therefore a
genuine restriction on the pair consisting of the chain and the initial law,
and not an automatic consequence of the construction.

\paragraph{The two textbook queues}
The infinite-server queue $M/M/\infty$ and the single-server queue $M/M/1$ are
birth-death chains on $\{0,1,2,\dots\}$ in which the state counts waiting
customers. The first serves everyone in parallel, so the service rate grows with
the queue, and the second works at a fixed rate regardless of the backlog. Both
are reversible, with a Poisson equilibrium for the first and a geometric
equilibrium for the second. Under the construction they become two quite different quantum systems.
The gauged generator of $M/M/\infty$ is a displaced quantum harmonic oscillator,
with an equally spaced energy ladder and a spectral gap equal to the service
rate no matter what the load is. The gauged generator of $M/M/1$ is a free
quantum particle hopping on a half-line with a constant hopping amplitude, so it
has a continuous band of energies together with one isolated level, and its gap
closes exactly at load one, which is the classical recurrence threshold for the existence of invariant probability measure. A
single classical distinction, whether the service rate scales with the queue,
becomes the distinction between a confining potential and a free band.

The infinite-server queue also carries an exact Born read-out, the direct
analogue of \cref{thm:born}. Started empty, the register motion of the imaginary
slice sweeps out a family of oscillator coherent states, and the Poisson law of
the queue at time $t$ is recovered as the Born law of the coherent state at
angle $\theta(t)$ under a clock of the form $\sin^{2}(\mu\theta/2)$ proportional
to $1-e^{-\mu t}$, where $\mu$ is the service rate. This clock maps the whole
stochastic half-line $t\in[0,\infty)$ into the bounded window
$\mu\theta\in[0,\tfrac\pi3)$, just as the Ehrenfest clock maps it into
$[0,\tfrac\pi2)$. Both read-outs are classically simulable and neither provides quantum advantage in a quantum computing sense.

These two queues also force the technical point that this preliminary version
avoids. The rates of $M/M/\infty$ are unbounded, so there is no uniformisation
rate and the gauge is not a matrix but an unbounded self-adjoint operator. The
decoding of \cref{thm:decode} then has to be established on a domain rather than
by matrix identities, which is one of the technical points the complete paper
takes up.

\paragraph{The symmetric simple exclusion process}
Particles sit on the sites of a ring, at most one per site, and each neighbouring
pair exchanges its contents at a fixed rate. Reading an occupied site as a spin
pointing up turns a particle configuration into a basis vector of a spin chain.
The equilibrium is uniform on each particle-number sector, so the square-root
gauge is the identity and there is nothing to do. What one finds is that the
exclusion generator itself, with no transformation at all, is the ferromagnetic
Heisenberg $XXX$ Hamiltonian, invariant under the full rotation group of the
spin and diagonalisable by the Bethe ansatz (this connection is well known in the statistical physics literature). The real slice of the resulting
entire group is the dissipative exclusion semigroup, decomposed into Lorentz
boosts indexed by the Bethe eigenstates, and the imaginary slice is a unitary
Heisenberg quantum magnet. This is not an analogy between two models. It is the
same matrix read at real and at imaginary argument.

\paragraph{The stochastic Ising model}
This family makes the sharpest point of the four. Spins on a ring carry the
ferromagnetic Ising energy, and a single-spin-flip dynamics reversible for the
corresponding Gibbs measure can be built from different choices of flip rate.
The two textbook choices are the heat-bath (Glauber) rate and the Metropolis
rate. They share the same equilibrium, the same reversibility, and spectral gaps
that agree within a factor of two at every temperature, so classically they are
nearly interchangeable and one picks whichever is convenient. Under the
construction they are not interchangeable at all. In one dimension the heat-bath
rule produces a quantum chain that is free-fermionic, and therefore exactly
solvable, while the Metropolis rule produces an interacting quantum chain with
no such structure. A modelling choice that is almost invisible classically is,
on the quantum slice, the difference between a solvable and a generically
unsolvable system.

\paragraph{What we take from the four}
The pattern that repeats is the one this paper proves for the symmetric urn.
A familiar reversible chain sits on the real slice, a familiar quantum system
sits on the imaginary slice, and they are one entire representation rather than
two models placed side by side. Because an entire function is fixed by its
restriction to a line, a statement on either slice constrains the other, and the
two slices are studied by communities with almost disjoint toolkits, namely
mixing-time technology on one side and integrability and reflection positivity
on the other. Turning that observation into an explicit transfer of results, in
either direction, is the central open problem the construction raises.

% ======================================================================

\section{Conclusion}
\label{sec:concluding}

For every finite, irreducible, reversible continuous-time Markov chain and every
admissible uniformisation rate, the construction produces one entire
complex-orthogonal group $W_z=e^{zK}$. That single group is the whole content of
the paper. Its real slice recovers the Markov semigroup by the exact parity
decoding of \cref{thm:decode}, its imaginary slice is the finite-dimensional
unitary evolution of \cref{thm:imag}, and neither slice is more fundamental than
the other. The classical objects and the quantum objects are restrictions of one
holomorphic family, so they determine each other, and the irreversibility of the
law is located in the real-slice decoded read-out rather than in the doubled flow, which
is invertible and conserves a bilinear length.

Three things come out of that setup without further input. The pseudo-wave obeys a pseudo-Schr\"odinger equation, the
complex-symmetric pseudo-density obeys a bilinear von Neumann equation, and its
coordinates obey a pseudo-Bloch vector equation, all in
\cref{thm:bilinear,prop:pseudo-bloch}. On the imaginary slice the three collapse
onto their textbook quantum counterparts. Off it they describe motion on the
non-compact quadric of \cref{prop:quadric}, which is the geometric room in which
dissipation lives and which the compact Bloch sphere does not have. These
equations are exact and, at present, unexploited. What their conserved
quantities mean when the conserved bilinear length is complex, how their orbits
classify, and whether they are a useful computational device rather than a
structural observation are open.

Two further structures are recorded for a general finite chain. On the
probability simplex the square-root amplitude map yields the Fisher--Rao
geometry of inference and the linear pseudo-wave map yields the flat $\chi^{2}$
geometry of convergence, in which reversible relaxation is self-adjoint, and the
representation is built on the second. The mixture construction adds a conserved
label spectrum (\cref{thm:gram}).

The symmetric Ehrenfest urn makes all of this explicit. The identity
$\Asym=\tfrac2nJ_x$ turns the spectral decomposition into the Krawtchouk basis,
splits the real slice into Lorentz-boost planes, and makes the imaginary register
motion a depth-one spin rotation quantum circuit. For the axial start the classical law is
exactly the Born law under $\theta(t)=\arccos(e^{-2t})$ (\cref{thm:born}), and
symmetric binomial starts stay on the same coherent-state orbit with the clock
of \cref{prop:born-binomial-starts}. The same angle is the Fisher--Rao distance
travelled by the single-ball law, and one fixed diagonal phase gauge identifies
the Fisher--Rao lift with the spin-coherent-state orbit
(\cref{prop:gauge-link}). The Lorentz-boost, depth-one-circuit and Majorana-star
descriptions of \cref{sub:ehr-connections} are three readings of that one
identity.

% ======================================================================
%  ACKNOWLEDGEMENTS
% ======================================================================

\section*{Acknowledgements}
\addcontentsline{toc}{section}{Acknowledgements}

It is a pleasure to thank L.~R.~Fontes (IME-USP), J.~C.~A.~Barata
(IF-USP) and G.~M.~Sch\"utz (Instituto Superior T\'ecnico, Lisbon) for
fruitful discussions.

This research was supported by the S\~ao Paulo Research Foundation (FAPESP),
grant \#2023/13453-5.

% ======================================================================
\begin{appendices}

\section{A concise primer on the quantum and Lie language}
\label{sec:primer-quantum}

This appendix collects, in elementary terms, the small amount of
quantum-mechanical and Lie-theoretic vocabulary the construction uses. A reader
fluent in this material can skim it. Nothing is needed in greater generality than
appears here. Standard sources are Sakurai and Napolitano \cite{Sakurai}, Hall
\cite{Hall}, and Bengtsson and \.Zyczkowski \cite{BengtssonZyczkowski}.

\subsection{Vectors, two pairings, and two matrix groups}
\label{sub:primer-pairings}

We work in $\CC^{N}$. A column vector is written $\Phi$, and where convenient we
use Dirac notation, $\ket{\psi}$ for a vector and $\bra{\psi}$ for the conjugate
row $\psi^{*}$. Two pairings of vectors are kept strictly apart, because their
difference is the whole point of the word ``pseudo''.

The \emph{Hermitian} or sesquilinear pairing is
$\langle\Phi,\Psi\rangle=\Phi^{*}\Psi=\sum_k\bar\Phi_k\Psi_k$, conjugate-linear in
the first slot. It is the pairing of quantum mechanics. The matrices that preserve
it form the \emph{unitary group} $\UU(N)=\{U:U^{*}U=\Id\}$. The \emph{bilinear}
pairing is $(\Phi,\Psi)=\Phi^{T}\Psi=\sum_k\Phi_k\Psi_k$, linear in both slots and
carrying no complex conjugation. The matrices that preserve it form the
\emph{complex-orthogonal group} $\OO(N,\CC)=\{W:W^{T}W=\Id\}$.

These two groups are genuinely different. A real rotation belongs to both. A
complex matrix can belong to one and not the other. The central observation of
this paper is that a reversible chain sits most naturally with the bilinear
pairing while the attached quantum system sits with the Hermitian one, and that
one entire family of matrices visits both. A matrix that is complex-orthogonal
but not unitary will be called \emph{non-unitary orthogonal} (NUO). Its defining
feature is that it preserves the bilinear length $\Phi^{T}\Phi$ while changing the
Hermitian length $\Phi^{*}\Phi$.

\subsection{Hermitian generators and unitary evolution}
\label{sub:primer-unitary}

An operator $H$ is \emph{Hermitian} if $H^{*}=H$. Its eigenvalues are real, and in
quantum mechanics it represents an observable. The associated evolution is the
unitary group $U_\theta=e^{-i\theta H}$, which satisfies
$U_\theta^{*}U_\theta=\Id$ and therefore preserves the Hermitian length. A pure
state is a unit vector $\ket\psi$, and its \emph{density matrix} is the Hermitian
rank-one projector $\sigma=\ket\psi\bra\psi=\psi\psi^{*}$ with $\Tr\sigma=1$.
Under unitary evolution the density obeys the \emph{von Neumann equation}
\begin{equation}
\dot\sigma=-i[H,\sigma],\qquad [X,Y]:=XY-YX.
\label{eq:vn-quantum}
\end{equation}
The entire quantum side of this paper is \eqref{eq:vn-quantum} and its solution.
Probabilities enter through the \emph{Born rule}. If a system in the
state $\sigma$ is measured in an orthonormal basis $\{\ket k\}$, the outcome $k$
occurs with probability $\bra k\sigma\ket k$, which for a pure state $\psi$ is
$|\braket k\psi|^{2}$.

\subsection{Tensor products, Pauli matrices, and the twist}
\label{sub:primer-pauli}

For $S\in\CC^{2\times2}$ and $A\in\CC^{M\times M}$ the \emph{Kronecker product}
$S\otimes A$ is the $2M\times2M$ block matrix with blocks $S_{ab}A$. The only
rules we use are $(S\otimes A)(S'\otimes A')=SS'\otimes AA'$ and
$(S\otimes A)^{T}=S^{T}\otimes A^{T}$. The \emph{Pauli matrices} are
\[
\sigmax=\begin{pmatrix}0&1\\1&0\end{pmatrix},\qquad
\sigmay=\begin{pmatrix}0&-i\\ i&0\end{pmatrix},\qquad
\sigmaz=\begin{pmatrix}1&0\\0&-1\end{pmatrix}.
\]
Each squares to the identity. The matrices $\sigmax$ and $\sigmaz$ are real
symmetric, while $\sigmay$ is purely imaginary and antisymmetric. The diagonal
unitary $v=\diag(1,i)$ rotates $\sigmax$ into $\sigmay$, since
$v\,\sigmax\,v^{-1}=\sigmay$. This single line is the twist that turns a
hyperbolic flow into the NUO flow in \cref{sec:dilation}.

\subsection{Spin and the angular-momentum matrices}
\label{sub:primer-spin}

For each half-integer $j\in\{\tfrac12,1,\tfrac32,\dots\}$ there is a
distinguished triple of $(2j+1)\times(2j+1)$ Hermitian matrices $\Jx,\Jy,\Jz$,
the \emph{angular-momentum} or spin-$j$ operators, obeying $[\Jx,\Jy]=i\Jz$ and
its cyclic permutations. Concretely $\Jz$ is diagonal with entries
$j,j-1,\dots,-j$, and $\Jx$ is the real symmetric tridiagonal matrix whose
nonzero off-diagonal entries are $\tfrac12\sqrt{(j-m)(j+m+1)}$. We need only
three facts. First, the spin-$j$ representation is the standard quantum system of
dimension $2j+1$, see \cite[Ch.~3]{Sakurai} and \cite[Ch.~4]{Hall}. Second, for
the urn of \cref{sec:ehrenfest} the gauged generator is exactly proportional to
the spin-$n/2$ matrix $\Jx$, which is what makes that example solvable in closed
form. The exponential $e^{-i\theta\Jx}$ rotates the spin by angle $\theta$ about
the $x$ axis. Third, the rotation $e^{-i\pi\Jy/2}$ carries the $J_z$ eigenbasis to
the $J_x$ eigenbasis, which is the rotated-basis statement used in
\cref{prop:ehr-kraw}.

\subsection{Lorentz boosts of signature \texorpdfstring{$(1,1)$}{(1,1)}}
\label{sub:primer-boost}

The last object we need is the simplest hyperbolic rotation. On $\RR^{2}$
equipped with the indefinite form $x^{2}-y^{2}$, the matrices
\begin{equation}
\beta_s=\begin{pmatrix}\cosh s&\sinh s\\ \sinh s&\cosh s\end{pmatrix},\qquad
s\in\RR,
\label{eq:boost}
\end{equation}
form a one-parameter group preserving that form, in the sense that
$\beta_s^{T}\diag(1,-1)\beta_s=\diag(1,-1)$. These are the two-dimensional
\emph{Lorentz boosts}, and $s$ is the \emph{rapidity}. A boost is the hyperbolic
analogue of a rotation. A rotation by angle $\theta$ keeps points on a circle
$x^{2}+y^{2}=\text{const}$, whereas a boost of rapidity $s$ keeps them on a
hyperbola $x^{2}-y^{2}=\text{const}$ and slides them off toward infinity as
$s\to\infty$. Rapidities add under composition, $\beta_s\beta_{s'}=\beta_{s+s'}$,
which is the additivity used in Connection I of \cref{sub:ehr-connections}. We
found in \cref{sec:dilation} that the real slice of the PQR flow acts on each
relaxation mode as exactly such a boost, so that relaxation to equilibrium
becomes the rapidity running off to infinity.

\section{A concise primer on the geometric language}
\label{sec:primer-geometry}

This appendix collects the differential-geometric vocabulary used in
\cref{sec:geometry-info} and in the clock discussion of \cref{sub:ehr-born}. As
before, nothing is needed in greater generality than appears here, and every
manifold we meet is an explicit surface inside $\RR^{N}$ or $\CC^{N}$. Standard
sources are Lee \cite{Lee} and, for the physics-flavoured material, Nakahara
\cite{NakaharaGTP}.

\subsection{Metrics, lengths, and geodesic distance}
\label{sub:primer-metric}

A \emph{Riemannian metric} on a smooth surface (manifold) $\mathcal M$ assigns to
each point $x$ an inner product $g_x(\cdot,\cdot)$ on the tangent vectors at $x$,
varying smoothly with $x$. It is the structure that lets one measure the length
of a curve $c(\tau)$, as $\int\sqrt{g_{c(\tau)}(\dot c,\dot c)}\,d\tau$, and then
define the \emph{geodesic distance} between two points as the infimum of lengths
of curves joining them. A curve achieving the infimum is a \emph{geodesic}, the
analogue of a straight line. Two examples carry the whole paper.

First, Euclidean space $\RR^{N}$ with the constant inner product
$g(u,v)=\sum_iu_iv_i$. Its geodesics are straight lines and its distance is the
usual one. The pseudo-wave map of \cref{sub:two-geometries} endows the simplex
with exactly this flat geometry, transported from the weighted space
$L^{2}(1/\nu)$.

Second, the unit sphere
$S^{N-1}=\{x\in\RR^{N}:|x|=1\}$ with the \emph{round metric}, the restriction of
the Euclidean inner product to vectors tangent to the sphere. Its geodesics are
great circles, and the geodesic distance between unit vectors $x$ and $y$ is the
angle $\arccos(x\cdot y)$ between them. All statements in this paper about
statistical angles are statements about great-circle distances on a sphere.

\subsection{Pullbacks: transporting a geometry along a map}
\label{sub:primer-pullback}

If $F:\mathcal M\to\mathcal N$ is a smooth map into a space carrying a metric
$g$, the \emph{pullback metric} $F^{*}g$ on $\mathcal M$ is defined by measuring
vectors after pushing them through the derivative of $F$,
\[
(F^{*}g)_x(u,v):=g_{F(x)}\bigl(dF_x(u),\,dF_x(v)\bigr).
\]
In words, one declares the map $F$ to be distance-preserving infinitesimally and
imports the geometry of the target. This is the device used twice in
\cref{sub:two-geometries}. The square-root map $p\mapsto\sqrt p$ sends the
probability simplex into the unit sphere, and the round metric pulls back to the
Fisher--Rao metric. The pseudo-wave map $p\mapsto D^{-1}p$ sends the simplex into
flat $\RR^{M}$, and the flat metric pulls back to the $\chi^{2}$ geometry. The
two maps send the same simplex to different surfaces, which is why the two
geometries differ globally even though both are Riemannian metrics on the same
set.

A map between Riemannian spaces that preserves the metric is an \emph{isometry}.
The isometries of the round sphere are the orthogonal rotations $\OO(N,\RR)$,
which is why the imaginary slice of the PQR, a real orthogonal flow by
\cref{thm:imag}, acts on amplitude vectors as a rigid rotation. On $\RR^{2}$ with
the \emph{indefinite} quadratic form $x^{2}-y^{2}$, in which squared lengths can
be negative, the linear maps preserving the form are the Lorentz boosts of
\cref{sub:primer-boost}. The pair signature $(1,1)$ records one plus and one
minus direction. The real slice of the PQR acts on each mode plane as such a
boost.

\subsection{Tangent vectors, the tangent bundle, and the quadric}
\label{sub:primer-tangent}

A \emph{tangent vector} to a surface at a point is the velocity of a curve
passing through that point. The set of all pairs (point, tangent vector at that
point) is the \emph{tangent bundle} $T\mathcal M$. For the sphere it is
\[
TS^{N-1}=\bigl\{(x,y)\in\RR^{N}\times\RR^{N}:\ |x|=1,\ x\cdot y=0\bigr\},
\]
a smooth space of dimension $2(N-1)$, the positions on the sphere together with
all velocities along it.

This is the object that appears in \cref{prop:quadric}. The complex quadric
$\Quad=\{\Phi\in\CC^{N}:\Phi^{T}\Phi=1\}$, written in real and imaginary parts
$\Phi=x+iy$, is the set of pairs with $|x|^{2}-|y|^{2}=1$ and $x\cdot y=0$.
Rescaling $x$ to the unit sphere identifies these pairs with points of
$TS^{N-1}$, base point $x/|x|$ and tangent vector $y$ (of any length). The
identification explains the two motions seen in the paper. A rotation of
$\RR^{N}$ moves base points and tangent vectors rigidly, which is the imaginary
slice, while the real slice slides points along the fibres, growing $|x|$ and
$|y|$ together while preserving $|x|^{2}-|y|^{2}$, which is a boost in each mode
plane. The quadric is \emph{non-compact}, one can run off to infinity along the
fibres, and that non-compactness is the geometric room in which dissipation
lives. The compact Bloch sphere of quantum mechanics has no such room, which is
one more way to see that the pseudo-density of \cref{sec:pseudostate} is not a
quantum state.

\section{A concise primer on the information-theoretic language}
\label{sec:primer-info}

This appendix collects the information-theoretic vocabulary used in
\cref{sec:geometry-info,sub:ehr-born}. Standard sources are Cover and
Thomas \cite{CoverThomas} for the classical notions, Amari and Nagaoka
\cite{AmariNagaoka} for information geometry, and Bengtsson and \.Zyczkowski
\cite{BengtssonZyczkowski} and Nielsen and Chuang \cite{NielsenChuang} for the
quantum side.

\subsection{Divergences on the simplex}
\label{sub:primer-div}

A probability law on $\Omega=\{1,\dots,M\}$ is a vector $p$ with $p_i\ge0$ and
$\sum_ip_i=1$, a point of the \emph{probability simplex}. A \emph{divergence}
$\mathrm{D}(p\,\|\,q)$ is a nonnegative measure of discrepancy between laws,
vanishing iff $p=q$, not necessarily symmetric and not necessarily a distance.
Three divergences appear in this paper.

The \emph{$\chi^{2}$ divergence}
\begin{equation}
\chi^{2}(p\,\|\,q)=\sum_i\frac{(p_i-q_i)^{2}}{q_i}
\label{eq:chi2-def}
\end{equation}
is quadratic in $p$, which is why it interacts so well with the linear
pseudo-wave map. \Cref{prop:chi2} shows it is an honest squared Euclidean
distance in the geometry of $L^{2}(1/\nu)$, and
\cref{rem:false-friend-density} shows that the conserved bilinear trace of the
pseudo-density is exactly $1+\chi^{2}(\pi_0\,\|\,\nu)$.

The \emph{Kullback-Leibler divergence} (relative entropy)
$\mathrm{KL}(p\,\|\,q)=\sum_ip_i\log(p_i/q_i)$ appears only as background. It
decreases along every Markov semigroup, which is the standard statement of
dissipativity quoted in the introduction \cite{CoverThomas,LPW}. Near $p=q$ the
two agree to leading order, $\mathrm{KL}(p\,\|\,q)=\tfrac12\chi^{2}(p\,\|\,q)
+O(\|p-q\|^{3})$, so all divergences in this paper share one local quadratic
form, the Fisher information form below.

The \emph{Bhattacharyya coefficient} and the induced angle,
\begin{equation}
\BCo(p,q)=\sum_i\sqrt{p_iq_i}\in[0,1],\qquad
\angleFR(p,q)=\arccos\BCo(p,q),
\label{eq:bc-def}
\end{equation}
measure overlap rather than discrepancy \cite{Bhattacharyya}. The coefficient is
the Euclidean inner product of the amplitude vectors $\sqrt p$ and $\sqrt q$ on
the unit sphere, so the angle $\arccos\BCo$ is a great-circle (geodesic) distance
in the sense of \cref{sec:primer-geometry}. Up to normalisation it is the
\emph{Hellinger} geometry, since the Hellinger distance obeys
$\Hel^{2}(p,q)=2-2\,\BCo(p,q)$. In this paper the convention is fixed in
\cref{sub:ehr-born}. The Fisher--Rao distance is $2\arccos\BCo$, and the Ehrenfest
clock $\theta(t)$ is exactly this distance between the single-ball law and its
starting point, with the Bhattacharyya angle as its half.

\subsection{Fisher information and the Cram\'er-Rao bound}
\label{sub:primer-fisher}

Let $p_\vartheta$ be a family of laws depending smoothly on a real parameter
$\vartheta$. The \emph{Fisher information} of the family at $\vartheta$ is
\begin{equation}
\Fishinfo(\vartheta)=\sum_i\frac{\bigl(\partial_\vartheta
p_{\vartheta,i}\bigr)^{2}}{p_{\vartheta,i}} .
\label{eq:fisher-def}
\end{equation}
Its operational meaning is the \emph{Cram\'er-Rao bound} \cite{Rao,CoverThomas}.
Every unbiased estimator $\hat\vartheta$ of $\vartheta$ built from one sample of
$p_\vartheta$ has variance at least $1/\Fishinfo(\vartheta)$, and $m$ independent
samples improve this to $1/(m\Fishinfo(\vartheta))$. Large information means an
easy estimation problem. For the binomial family
$\mathrm{Binomial}(n,p(\vartheta))$ one computes
$\Fishinfo(\vartheta)=\tfrac{n}{p(1-p)}(dp/d\vartheta)^{2}$.

The Fisher information is the quadratic form of a Riemannian metric on the
simplex, the \emph{Fisher--Rao metric}, whose squared line element is
$ds^{2}=\sum_i dp_i^{2}/p_i$ (up to an overall convention factor). The
square-root map $p\mapsto\sqrt p$ turns it into the round sphere metric, since
$dp_i/\sqrt{p_i}=2\,d\sqrt{p_i}$, which is the pullback statement of
\cref{sub:two-geometries} and the source of the factor-two bookkeeping between
the angle $\arccos\BCo$ and the Fisher--Rao distance. \v{C}encov's theorem states
that this metric is, up to scale, the unique Riemannian metric on the simplex
that never expands under a stochastic map (a transition kernel applied to the
law) \cite{Cencov,AmariNagaoka}. Stochastic maps can only blur, never sharpen,
statistical distinguishability, and Fisher--Rao is the geometry that records
exactly this.

\end{appendices}

% ======================================================================


\begin{thebibliography}{99}


\bibitem{Kac} M. Kac, \emph{Random walk and the theory of Brownian motion.}
American Mathematical Monthly \textbf{54} (1947), 369--391.


\bibitem{KarlinMcGregor} S. Karlin and J. McGregor, \emph{Ehrenfest urn models.}
Journal of Applied Probability \textbf{2} (1965), 352--376.


\bibitem{Krawtchouk} M. Krawtchouk, \emph{Sur une g\'en\'eralisation des polyn\^omes
d'Hermite.} Comptes Rendus Acad. Sci. Paris \textbf{189} (1929), 620--622.


\bibitem{Schwinger} J. Schwinger, \emph{On Angular Momentum.} U.S. AEC Report NYO-3071
(1952). Reprinted in L. C. Biedenharn and H. van Dam (eds.), \emph{Quantum Theory of
Angular Momentum}, Academic Press (1965).


\bibitem{Sakurai} J. J. Sakurai and J. Napolitano, \emph{Modern Quantum Mechanics.}
2nd ed., Cambridge University Press (2017).


\bibitem{Hall} B. C. Hall, \emph{Lie Groups, Lie Algebras, and Representations.}
2nd ed., Springer (2015).


\bibitem{Aldous} D. Aldous and J. Fill, \emph{Reversible Markov Chains and Random Walks
on Graphs.} Unfinished monograph, recompiled 2014 (available online).


\bibitem{LPW} D. A. Levin, Y. Peres, and E. L. Wilmer, \emph{Markov Chains and Mixing
Times.} 2nd ed., American Mathematical Society (2017).


\bibitem{Norris} J. R. Norris, \emph{Markov Chains.} Cambridge University Press (1997).


\bibitem{Liggett_ctmc} T. M. Liggett, \emph{Continuous Time Markov Processes: An
Introduction.} American Mathematical Society (2010).


\bibitem{Majorana} E. Majorana, \emph{Atomi orientati in campo magnetico variabile.}
Il Nuovo Cimento \textbf{9} (1932), 43--50.


\bibitem{Radcliffe} J. M. Radcliffe, \emph{Some properties of coherent spin states.}
Journal of Physics A \textbf{4} (1971), 313--323.


\bibitem{BengtssonZyczkowski} I. Bengtsson and K. \.Zyczkowski, \emph{Geometry of Quantum
States.} 2nd ed., Cambridge University Press (2017).


\bibitem{Rao} C. R. Rao, \emph{Information and the accuracy attainable in the estimation
of statistical parameters.} Bulletin of the Calcutta Mathematical Society \textbf{37}
(1945), 81--91.


\bibitem{Bhattacharyya} A. Bhattacharyya, \emph{On a measure of divergence between two
statistical populations.} Bulletin of the Calcutta Mathematical Society \textbf{35}
(1943), 99--109.


\bibitem{AmariNagaoka} S. Amari and H. Nagaoka, \emph{Methods of Information Geometry.}
American Mathematical Society / Oxford University Press (2000).


\bibitem{NielsenChuang} M. A. Nielsen and I. L. Chuang, \emph{Quantum Computation and
Quantum Information.} 10th Anniversary ed., Cambridge University Press (2010).


\bibitem{NakaharaGTP} M. Nakahara, \emph{Geometry, Topology and Physics.}
2nd ed., Institute of Physics Publishing (2003).


\bibitem{Cencov} N. N. Chentsov, \emph{Statistical Decision Rules and Optimal
Inference.} Translations of Mathematical Monographs \textbf{53}, American
Mathematical Society (1982).


\bibitem{Petz} D. Petz, \emph{Monotone metrics on matrix spaces.}
Linear Algebra and its Applications \textbf{244} (1996), 81--96.


\bibitem{HornJohnson} R. A. Horn and C. R. Johnson, \emph{Matrix Analysis.}
2nd ed., Cambridge University Press (2013).


\bibitem{Lee} J. M. Lee, \emph{Introduction to Riemannian Manifolds.}
2nd ed., Springer (2018).


\bibitem{CoverThomas} T. M. Cover and J. A. Thomas, \emph{Elements of Information
Theory.} 2nd ed., Wiley (2006).


\bibitem{SzNagy} B. Sz.-Nagy, C. Foia\c{s}, H. Bercovici, and L. K\'erchy,
\emph{Harmonic Analysis of Operators on Hilbert Space.} 2nd ed., Springer (2010).


\end{thebibliography}
\end{document}